\theoremstyle{plain}
\newtheorem{theorem}{Theorem}[section]
\newtheorem{lemma}[theorem]{Lemma}
\newtheorem{proposition}[theorem]{Proposition}
\newtheorem{corollary}{Corollary}
\theoremstyle{definition}
\newtheorem{defn}{Definition}[section]
\title{}
\begin{document}
\title[Line graph characterization of power graphs of finite groups]{Line graph characterization of power graphs of finite nilpotent groups}
\author[Sudip Bera]{Sudip Bera}
%\author[Hiranya Kishore Dey]{Hiranya Kishore Dey}
%\author[Sajal Kumar Mukherjee]{Sajal Kumar Mukherjee}
\address[Sudip Bera]{Department of Mathematics, Indian Institute of Science, Bangalore 560 012}
\email{sudipbera517@gmail.com}
\keywords{Nilpotent group; Line graph; Power graph; Proper power graph.}
\subjclass[2010]{05C25; 20D10}
\maketitle
\begin{abstract}
This paper deals with the classification of groups $G$ such that power graphs and proper power graphs of $G$ are line graphs. In fact, we classify all finite nilpotent groups whose power graphs are line graphs. Also, we categorize all finite nilpotent groups (except non abelian $2$-groups) whose proper power graphs are line graphs. Moreover, we innvestigate when the proper power graphs of generalized quaternion groups are line graphs. Besides, we derive a condition on the order of the dihedral groups for which the proper power graphs of the dihedral groups are line graphs.   
\end{abstract}
\section{Introduction}
\label{sec:intro}
The investigation of graph representations is one of the interesting and popular research topic in algebraic graph theory, as graphs like these enrich both algebra and graph theory. Moreover, they have important applications (see, for example, 
\cite{surveypwrgraphkac1, cayleygraphsckry}) and are related to automata theory \cite{automatatheory}. During the last two decades, the investigation of the interplay
between the properties of an algebraic structure $S$ and the graph-theoretic properties of $\Gamma(S),$ a graph
associated with $S,$ has been an exciting topic of research. Different types of graphs, specifically power graph of semigroup \cite{undpwrgraphofsemgmainsgc1, directedgrphcompropofsemgrpkq3}, group \cite{combinatorialpropertyandpowergraphsofgroupskq1}, normal subgroup based power graph of group \cite{normalsubgrpbasedpwrbb2}, intersection power graph of group \cite{intersectionpwegraphb3}, enhanced power graph of group \cite{firstenhcedpwrstrctreaacns1,bera-dey-mukherjee-connectivity-enhanced} etc. have been introduced to study algebraic structures using graph theory. One of the major graph representation
amongst them is the power graphs of finite groups. We found several papers in this context \cite{surveypwrgraphkac1,pwrgraphoffntgrpgc1,Heidar-Jafari,forbidden-cameron-manna-Mehatari,undpwrgraphofsemgmainsgc1,Hamzeh-ashrafi}.
The concept of a power graph was introduced in \cite{combinatorialpropertyandpowergraphsofgroupskq1}. As
explained in the survey \cite{surveypwrgraphkac1}, this definition also covered the undirected graphs. Accordingly, the present paper follows Chakrabarty \emph{et al}. and uses the brief term ``power graph'' defined as follows. 
\begin{defn}[\cite{surveypwrgraphkac1,undpwrgraphofsemgmainsgc1, combinatorialpropertyandpowergraphsofgroupskq1}]\label{defn: powr graph}
Let $S$ be a semigroup, then the \emph{power graph} $\mathcal{P}(S)$ of $S,$ is a simple graph, whose vertex set is $S$ and two distinct vertices $u$ and $v$ are edge connected if and only if either $u^m=v$ or $v^n=u,$ where $m, n\in \mathbb{N}.$    
\end{defn}
The authors in \cite{undpwrgraphofsemgmainsgc1} studied various properties of the power graph. They characterized the class of semigroups $S$ for which $\mathcal{P}(S)$ is connected or complete. As a consequence they proved the following; 
\begin{lemma}[Theorem 2.12, \cite{undpwrgraphofsemgmainsgc1}]\label{thm:P(G) compltele iff G cylcic p group}
For a  finite group $G,$ the power graph $\mathcal{P}(G)$ is complete if and only if $G$ is cylcic group of order $1$ or $p^m,$ for some prime $p$ and for some $m\in \mathbb{N}.$ 
\end{lemma}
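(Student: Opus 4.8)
The plan is to prove both directions directly from the definition, using the characterization that $\mathcal{P}(G)$ is complete exactly when any two distinct elements are joined, i.e.\ one is a power of the other.

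For the ``if'' direction, suppose $G$ is cyclic of order $1$ or $p^m$. The order-$1$ case is trivial, so let $G=\langle g\rangle$ with $|g|=p^m$. The decisive structural fact I would invoke is that a cyclic $p$-group has a unique subgroup of each order dividing $p^m$, so that its subgroups form a chain under inclusion. Given distinct $u,v\in G$ with $|u|=p^a$, $|v|=p^b$, and say $a\le b$, the cyclic subgroup $\langle u\rangle$ and the unique order-$p^a$ subgroup of $\langle v\rangle$ both coincide with the unique order-$p^a$ subgroup of $G$; hence $u\in\langle v\rangle$, so $u$ is a power of $v$ and $u,v$ are adjacent. As $u,v$ were arbitrary, $\mathcal{P}(G)$ is complete.

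For the ``only if'' direction, assume $\mathcal{P}(G)$ is complete and $|G|>1$. First I would show $G$ is cyclic: choose $g\in G$ of maximal order $n$, and for any $h\in G$ use completeness to obtain either $h\in\langle g\rangle$ or $g\in\langle h\rangle$. In the latter case $\langle g\rangle\subseteq\langle h\rangle$ forces $|g|\le|h|$, and maximality of $|g|$ then gives $\langle h\rangle=\langle g\rangle$; so in either case $h\in\langle g\rangle$. Thus $G=\langle g\rangle$ is cyclic of order $n$. Next I would show $n$ is a prime power: if two distinct primes $p,q$ divided $n$, the cyclic group $G$ would contain elements $a,b$ of orders $p$ and $q$ respectively; neither can be a power of the other since $p\nmid q$ and $q\nmid p$, so $a,b$ are non-adjacent, contradicting completeness. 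Hence $n=p^m$, which completes the proof.

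The genuine content sits in the ``only if'' direction, and the step I expect to be the crux is the cyclicity argument: the maximality of $|g|$ must be leveraged correctly so that the alternative $g\in\langle h\rangle$ collapses to $\langle h\rangle=\langle g\rangle$. Once cyclicity is in hand, ruling out a second prime divisor is a short order-divisibility observation, and the ``if'' direction follows immediately from the chain structure of the subgroups of a cyclic $p$-group.
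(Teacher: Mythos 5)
Your proof is correct: the chain structure of subgroups of a cyclic $p$-group gives the ``if'' direction, and the maximal-order element argument plus the two-primes obstruction gives the ``only if'' direction, with the key step (collapsing $g\in\langle h\rangle$ to $\langle h\rangle=\langle g\rangle$ via maximality) handled properly. Note, however, that the paper does not prove this statement at all --- it is quoted as Theorem 2.12 of the cited reference \cite{undpwrgraphofsemgmainsgc1} --- so there is no in-paper argument to compare against; your write-up is simply a correct, self-contained proof of the imported result.
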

It is clear that for two groups $G_1$ and $G_2, G_1\cong G_2$ implies that $\mathcal{P}(G_1)\cong\mathcal{P}(G_2).$ A natural question that arises is: Does the converse hold? In \cite{pwrgraphoffntgrpgc1}, the authors showed that the non-isomorphic finite groups may have isomorphic power graphs, but that finite abelian groups with isomorphic power graphs must be isomorphic. Also they conjectured that two finite groups with isomorphic power graphs have the same number of elements of each order. Then Cameron proved this conjecture in \cite{jgt-cameron}.

In \cite{curtin-Pourgholi-prpoer-power-graph}, Curtin \emph{et al}. introduced the concept of deleted power graphs. They introduced deleted power graph as follows;
\begin{defn}\label{defn: proper enhacd pwr graph}
Given a group $G,$ the \emph{proper power graph} of $G,$ denoted by $\mathcal{P}^{**}(G),$ is the graph obtained by deleting all the dominating vertices from the power graph $\mathcal{P}(G).$ Moreover, by $\mathcal{P}^{*}(G)$ we denote the graph obtained by deleting only the identity element of $G$ and this is called \emph{deleted power graph} of $G.$ Note that if there is no such dominating vertex other than identity, then $\mathcal{P}^{*}(G)=\mathcal{P}^{**}(G).$
\end{defn}
Curtin \emph{et al}. discussed the diameter of the proper power graph of the symmetric group $S_n$ on $n$ symbols. For more information related to proper power graphs we refer to \cite{curtin-Pourgholi-prpoer-power-graph, Dostabadi-Farrokhi-Ghouchan,Shi}. Then Aalipour \emph{et al}. in \cite[Question 40]{firstenhcedpwrstrctreaacns1} asked about the connectivity of proper power graphs. Recently, Cameron and Jafari in \cite{Heidar-Jafari} answered this question. Also they characterized all dominatable power graphs by following lemma:
\begin{lemma}[Theorem 4, \cite{Heidar-Jafari}] Let $G$ be a finite group. Suppose that $x\in G$ has the property that for all $y\in G,$ either $x$ is a power of $y$ or vice versa. Then one of the following holds:
\begin{enumerate}\label{lemma: classification of dominating vertices of power graph}
\item[(a)]
$x=e;$
\item [(b)]
$G$ is cyclic and $x$ is a generator;
\item[(c)] 
$G$ is a cyclic p-group for some prime $p$ and $x$ is arbitrary;
\item[(d)]
$G$ is a generalized quaternion group and $x$ has order $2.$
\end{enumerate}	
\end{lemma}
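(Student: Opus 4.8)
The plan is to classify the elements $x$ satisfying the stated domination property (for every $y$, either $x \in \langle y\rangle$ or $y \in \langle x\rangle$) by first isolating one structural lemma and then splitting on whether $G$ is cyclic. If $x=e$ we are in case (a), so assume $x\neq e$ throughout. The central observation I would record first is that \emph{$x$ lies in every maximal cyclic subgroup of $G$}: given a maximal cyclic subgroup $M=\langle m\rangle$, applying the hypothesis to $y=m$ gives either $x\in\langle m\rangle=M$, or $m\in\langle x\rangle$, in which case $M=\langle m\rangle\subseteq\langle x\rangle$ and maximality of $M$ among cyclic subgroups forces $\langle x\rangle=M$; in both cases $x\in M$.

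Next I would dispose of the cyclic case. If $G$ is cyclic of order $n$ and $|x|=d\mid n$, then $y\in\langle x\rangle$ is equivalent to $|y|\mid d$ and $x\in\langle y\rangle$ to $d\mid |y|$; since the orders occurring in $G$ are exactly the divisors of $n$, the domination property is equivalent to the purely number-theoretic condition that every divisor of $n$ be comparable to $d$ under divisibility. This holds precisely when $d=n$ (so $x$ generates $G$, case (b)) or when $n$ is a prime power (so the divisor lattice is a chain and $d$ is arbitrary, case (c)); when $n$ is divisible by two primes and $d<n$ one exhibits an incomparable divisor, ruling out domination (this is consistent with Lemma~\ref{thm:P(G) compltele iff G cylcic p group}).

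The substantive work is the non-cyclic case, where the existence of at least two distinct maximal cyclic subgroups is guaranteed (a unique one would equal $G$). Here I would argue in three reductions. First, $|x|$ is a prime power: if two primes divided $|x|$, then for every prime $r\mid |G|$ and every $r$-element $w$ the relation $x\in\langle w\rangle$ would be impossible (as $|x|$ would have a prime factor other than $r$), forcing $w\in\langle x\rangle$; hence every Sylow subgroup of $G$ lies in the cyclic group $\langle x\rangle$, which forces $\langle x\rangle=G$, contradicting non-cyclicity. Writing $|x|=p^a$, the same dichotomy applied to an element of prime order $r\neq p$ (which cannot lie in the $p$-group $\langle x\rangle$, nor can $x$ be its power) shows no such $r$ divides $|G|$, so $G$ is a $p$-group. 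Finally, for any element $z$ of order $p$ the hypothesis forces $\langle z\rangle$ to coincide with the unique subgroup of order $p$ of $\langle x\rangle$, so $G$ has a \emph{unique} subgroup of order $p$.

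At this point the main obstacle, and the one genuine external input, is the classical structure theorem that a non-cyclic finite $p$-group with a unique subgroup of order $p$ must be a generalized quaternion $2$-group; invoking it yields $G\cong Q_{2^n}$ and $p=2$. To pin down $x$ I would return to the maximal-cyclic lemma: in $Q_{2^n}$ the cyclic maximal subgroup of order $2^{n-1}$ meets any order-$4$ maximal cyclic subgroup in exactly $\{1,-1\}$, so $x\in\{1,-1\}$, and $x\neq e$ gives $|x|=2$, which is case (d). I expect the prime-power reduction and the correct citation of the $p$-group classification to be the delicate points; everything else is bookkeeping with cyclic subgroups.
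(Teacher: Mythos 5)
This lemma is not proved in the paper at all: it is imported verbatim as Theorem~4 of \cite{HeidarJafari}, so there is no internal proof to compare against. Your argument is correct and self-contained: the reduction of the cyclic case to comparability of divisors of $n$, the observation that $x$ lies in every maximal cyclic subgroup, the successive reductions in the non-cyclic case ($|x|$ a prime power, then $G$ a $p$-group, then a unique subgroup of order $p$), and the final localization of $x$ to the pairwise intersection of the maximal cyclic subgroups of $Q_{2^n}$ all check out. The one point to flag is the external input: the classification you invoke --- a non-cyclic finite $p$-group with a unique subgroup of order $p$ is generalized quaternion, forcing $p=2$ --- is strictly stronger than what the paper records as Lemma~\ref{p group unique subgrp of order p, g is cyclic}, which covers only odd $p$ (and concludes cyclicity); you need the full Burnside--Gorenstein statement including the $2$-group case, so it should be cited in that form rather than via the paper's quoted version.
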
 
\subsection{Basic Definitions, Notations and Main Results}
We begin this section with some standard definitions from graph theory and group theory. For the convenience of the reader and also for later use, we recall some basic
definitions and notations about graphs. 
Let $\Gamma=(V, E)$ be a graph where $V$ is the set of vertices and $E$ is the set of edges. A graph $\Gamma'=(V', E')$ is a subgraph of another graph $\Gamma=(V, E)$ if and only if $V'\subset V,$ and $E'\subset E.$ An \emph{induced} subgraph of a graph is another graph, formed from a subset of the vertices of the graph and all of the edges connecting pairs of vertices in that subset. A graph $\Gamma$ is said to be \emph{connected} if for any pair of vertices $u$ and $v,$ there exists a path between $u$ and $v.$ $\Gamma$ is said to be \emph{complete} if any two distinct vertices are adjacent. A \emph{clique} of a graph $\Gamma$ is an induced subgraph of $\Gamma$ that is complete. The complete graph with $n$ vertices is denoted by $K_n.$ A \emph{bipartite} graph (or bigraph) is a graph whose vertices can be divided into two disjoint and independent sets $V_1$ and $V_2$ such that every edge connects a vertex in $V_1$ to one in $V_2.$ Vertex sets $V_1$ and $V_2$ are usually called the parts of the graph. A \emph{complete bipartite} graph or biclique is a special kind of bipartite graph where every vertex of the first set is connected to every vertex of the second set. The \emph{star} graph with $n+1$ vertices is denoted by $\Gamma_{1, n}$ which consists of a single vertex with $n$ neighbours. A star graph with the vertex set $v, v', v'', v'''$ is denoted by $\Gamma_{1, 3}(v, v', v'', v'''),$ where $v$ is edge connected to each of the vertices $v', v'', v'''$ and there is no edge between the vertices $v', v'', v'''.$ A vertex of a graph $\Gamma=(V, E)$ is called a \emph{dominating vertex} if it is adjacent to every other
vertex. For a graph $\Gamma,$ let $\text{Dom}(\Gamma)$ denote the set of all dominating vertices in $\Gamma.$ The \emph{vertex connectivity} of a graph $\Gamma,$ denoted by $\kappa{(\Gamma)}$ is the minimum number of vertices which need to be removed from the vertex set $\Gamma$ so that the
induced subgraph of $\Gamma$ on the remaining vertices is disconnected. The complete graph with $n$ vertices has connectivity $n-1.$ A graph $\Gamma$ is a \emph{cograph }if it has no induced subgraph isomorphic to the four-vertex path $P_4.$	A graph $\Gamma$ is \emph{chordal} if it contains no induced cycles of length greater than $3;$ in other words, every cycle on more than $3$ vertices has a chord. A \emph{threshold} graph is a graph containing no induced subgraph isomorphic
to $P_4 , K_4$ or $2K_2 (\text{ or } K_2\bigoplus K_2)\text{ (two disjoint edges with no further edges connecting them) }.$ In general, let $\Gamma_1, \cdots, \Gamma_m$ be $m$ graphs such that $V(\Gamma_i)\cap V(\Gamma_j)=\emptyset,$ for $i\neq j.$ Then $\Gamma=\Gamma_1\bigoplus\cdots\bigoplus\Gamma_m$ be a graphs with vertex set is $V(\Gamma)=V(\Gamma_1)\cup\cdots\cup V(\Gamma_m)$ and $E(\Gamma)=E(\Gamma_1)\cup\cdots\cup E(\Gamma_m).$ Two graphs $\Gamma_1$ and $\Gamma_2$ are \emph{isommphic} if there is a
bijection, $f$ (say) from $V(\Gamma_1)$ to $V(\Gamma_2)$ such that $v\sim v'$ in $\Gamma_1$ if and only if
$f(v)\sim f(v')$ in $\Gamma_2.$ for the vertices $v, v', v\sim v'$ denotes that $v$ and $v'$ are edge connected. Also $v\nsim v'$ means that $v$ and $v'$ are not edge connected.
\begin{defn}[\cite{Godsil,w}]
The line graph of a graph $\Gamma$ is the graph $L(\Gamma)$ with the edges of $\Gamma$ as its vertices, and where two edges of $\Gamma$ are adjacent in $L(\Gamma)$ if and only if they are incident in $\Gamma.$ If a graph $\Gamma'$ is a line graph of some graph then we can call the graph $\Gamma'$ a line graph.  	
\end{defn}
One of the most important results related to the characterization of line graph is Lemma \ref{line graph}. For more information on line graphs we refer \cite{Godsil,w}.
\begin{lemma}[Theorem 7.1.18, \cite{Godsil}]\label{line graph}
A graph $\Gamma$ is the line graph of some graph if and only if $\Gamma$ does not have any of the nine graphs in Figure \ref{fig:line grapph theory} as an induced subgraph.
\end{lemma}
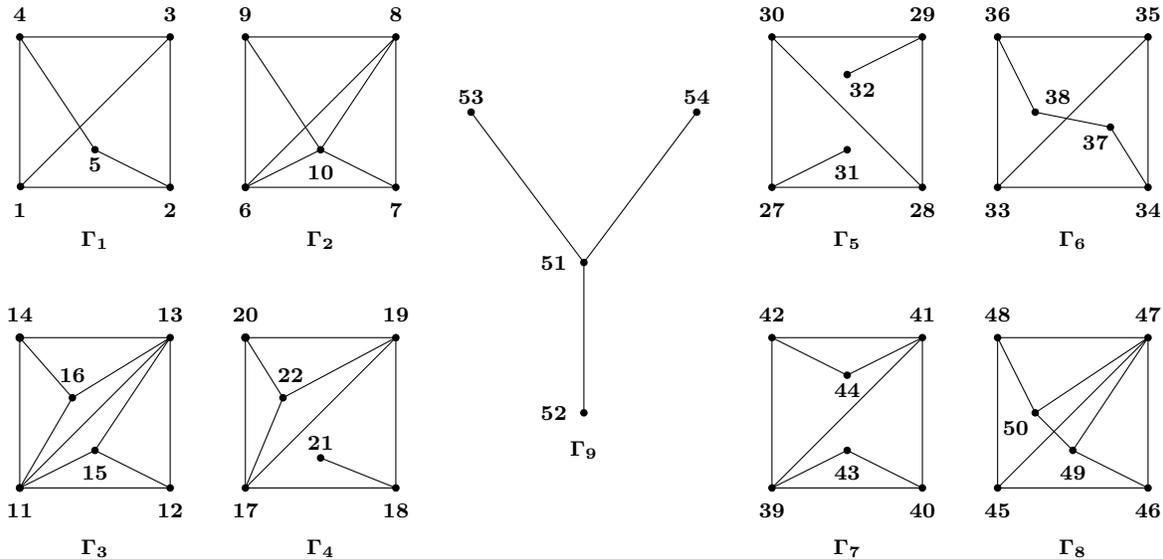
\begin{figure}[H]
	\tiny
	%\tikzstyle{ver}=[]
	%\tikzstyle{vert}=[circle, draw, fill=black!100, inner sep=0pt, minimum width=4pt]
	%\tikzstyle{vertex}=[circle, draw, fill=black!00, inner sep=0pt, minimum width=4pt]
	%\tikzstyle{edge} = [draw,thick,-]
	%\tikzstyle{edge_style} = [draw=black, line width=2, ultra thick]
	%\tikzstyle{node_style} = [circle,draw=blue,fill=blue!20!,font=\sffamily\Large\bfseries]
	\centering
	%\tikzset{->,>=stealth', auto,node distance=1cm,
	%	thick,main node/.style={circle,draw,font=\sffamily\Large\bfseries}}
	%\tikzset{->-/.style={decoration={
	%	markings,
	%	mark=at position #1 with {\arrow{>}}},postaction={decorate}}}
	\begin{tikzpicture}[scale=1]
	\tikzstyle{edge_style} = [draw=black, line width=2mm, ]
	%\tikzstyle{node_style} = [draw=blue,fill=blue!00!,font=\sffamily\Large\bfseries]
	%\draw[->, line width=.2 mm] (4,2) -- (2,0);
	%\draw[ ] (1.5,1.5)--(1.53,0);
	%\draw (-1,-1) -- (2.2,-1) -- (2.2,2) -- (-1,2) -- (-1,-1);
	%\draw  (-1,-1) rectangle (2.4,2);
	%\draw (-7,-1) -- (-3.5,-1) -- (-3.5,2) -- (-7,2) -- (-7,-1);
	\draw  (0,0) rectangle (2,2);
	\draw  (0,0) -- (2,2);
	\draw  (2,0) -- (1,.5);
	\draw  (0,2) -- (1,.5);
	\node (e) at (0,-.3)  {$\bf{1}$};
	\node (e) at (2,-.3)  {$\bf{2}$};
	\node (e) at (2,2.3)  {$\bf{3}$};
	\node (e) at (0,2.3)  {$\bf{4}$};
	\node (e) at (1,.3)  {$\bf{5}$};
	\node (e) at (1,-.7)  {$\bf{\Gamma_1}$};
	%
	%\fill[black!100!] (0, 0) cirle (.05);
	\fill[black!100!] (0.01, 0.01) circle (.05);
	\fill[black!100!] (2, 2) circle (.05);
	\fill[black!100!] (2, 0) circle (.05);
	\fill[black!100!] (0,2) circle (.05);
	\fill[black!100!] (1,.5) circle (.05);
	%\fill[black!100!] (0.01, 0.01) circle (.05);
	%
	\draw  (3,0) rectangle (5,2);
	\draw (3,0)--(5,2);
	\draw (4,.5)--(5,2);
	\draw (4,.5)--(3,0);
	\draw (4,.5)--(5,0);
	\draw (4,.5)--(3,2);
	\node (e) at (3,-.3){$\bf{6}$};
	\node (e) at (5,-.3){$\bf{7}$};
	\node (e) at (5,2.3){$\bf{8}$};
	\node (e) at (3,2.3){$\bf{9}$};
	\node (e) at (4,.2){$\bf{10}$};
	\node (e) at (4,-.7)  {$\bf{\Gamma_2}$};
	\fill[black!100!] (3,0) circle (.05);
	\fill[black!100!] (5, 2) circle (.05);
	\fill[black!100!] (4, 0.5) circle (.05);
	\fill[black!100!] (3,2) circle (.05);
	\fill[black!100!] (3,0) circle (.05);
	\fill[black!100!] (5,0) circle (.05);
	\draw  (0,-4) rectangle (2,-2);
	\draw (0,-4)--(2,-2);
	\draw (0,-4)--(1,-3.5);
	\draw (2,-4)--(1,-3.5);
	\draw (2,-2)--(1,-3.5);
	\draw (.7,-2.8)--(0,-2);
	\draw (.7,-2.8)--(2,-2);
	\draw (.7,-2.8)--(0,-4);
	\node (e) at (0,-4.3){$\bf{11}$};
	\node (e) at (2,-4.3){$\bf{12}$};
	\node (e) at (2,-1.7){$\bf{13}$};
	\node (e) at (0,-1.7){$\bf{14}$};
	\node (e) at (1,-3.8){$\bf{15}$};
	\node (e) at (.7,-2.5){$\bf{16}$};
	\node (e) at (1,-4.8)  {$\bf{\Gamma_3}$};
	\fill[black!100!] (0,-4) circle (.05);
	\fill[black!100!] (2, -2) circle (.05);
	\fill[black!100!] (1, -3.5) circle (.05);
	\fill[black!100!] (2,-4) circle (.05);
	\fill[black!100!] (0,-4) circle (.05);
	\fill[black!100!] (.7,-2.8) circle (.05);
	\filldraw[black!100] (0,-2) circle (.05);
	%	\draw (10,2)--(9,.5);
	%\draw (10,0)--(9,.5);
	%\draw (12,2)--(9,.5);
	%\draw (10.8,1.4)--(10,2);
	\draw  (3,-4) rectangle (5,-2);
	\draw  (3,-4) -- (5,-2);
	\draw  (5,-4) -- (4,-3.6);
	\draw  (3,-2) -- (3.5,-2.8);
	\draw  (5,-2) -- (3.5,-2.8);
	\draw  (3,-4) -- (3.5,-2.8);
	\node (e) at (3,-4.3){$\bf{17}$};
	\node (e) at (5,-4.3){$\bf{18}$};
	\node (e) at (5,-1.7){$\bf{19}$};
	\node (e) at (3,-1.7){$\bf{20}$};
	\node (e) at (4,-3.4){$\bf{21}$};
	\node (e) at (3.6,-2.5){$\bf{22}$};
	\node (e) at (4,-4.8)  {$\bf{\Gamma_4}$};
	\fill[black!100!] (3,-4) circle (.05);
	\fill[black!100!] (5, -2) circle (.05);
	\fill[black!100!] (4, -3.6) circle (.05);
	\fill[black!100!] (5,-4) circle (.05);
	\fill[black!100!] (0,-4) circle (.05);
	\fill[black!100!] (3.5,-2.8) circle (.05);
	\filldraw[black!100] (3,-2) circle (.05);
	\draw  (10,0) rectangle (12,2);
	\draw  (12,0) -- (10,2);
	\draw  (10,0) -- (11,.5);
	\draw  (12,2) -- (11,1.5);
	\node (e) at (10,-.3){$\bf{27}$};
	\node (e) at (12,-.3){$\bf{28}$};
	\node (e) at (12,2.3){$\bf{29}$};
	\node (e) at (10,2.3){$\bf{30}$};
	\node (e) at (11,.2){$\bf{31}$};
	\node (e) at (11.2,1.3){$\bf{32}$};
	\node (e) at (11,-.7)  {$\bf{\Gamma_5}$};
	\fill[black!100!] (10,0) circle (.05);
	\fill[black!100!] (12, 0) circle (.05);
	\fill[black!100!] (12, 2) circle (.05);
	\fill[black!100!] (10,2) circle (.05);
	\fill[black!100!] (11,.5) circle (.05);
	\fill[black!100!] (11,1.5) circle (.05);
	%\filldraw[black!100] (0,-2) circle (.05);
	%
	\draw  (13,0) rectangle (15,2);
	\draw  (13,0) -- (15,2);
	\draw  (15,0)-- (14.5,.8);
	\draw  (13,2) -- (13.5,1);
	\draw  (14.5,.8) -- (13.5,1);
	\node (e) at (13,-.3){$\bf{33}$};
	\node (e) at (15,-.3){$\bf{34}$};
	\node (e) at (15,2.3){$\bf{35}$};
	\node (e) at (13,2.3){$\bf{36}$};
	\node (e) at (14.3,.6){$\bf{37}$};
	\node (e) at (13.8,1.2){$\bf{38}$};
	\node (e) at (14,-.7)  {$\bf{\Gamma_6}$};
	\fill[black!100!] (13,0) circle (.05);
	\fill[black!100!] (15,2) circle (.05);
	\fill[black!100!] (15,0) circle (.05);
	\fill[black!100!] (13,2) circle (.05);
	\fill[black!100!] (14.5,.8) circle (.05);
	\fill[black!100!] (13.5,1) circle (.05);
	%\filldraw[black!100] (0,-2) circle (.05);
	%
	\draw  (10,-4) rectangle (12,-2);
	\draw (10,-4)--(12,-2);
	\draw (10,-4)--(11,-3.5);
	\draw (12,-4)--(11,-3.5);
	\draw (10,-2)--(11,-2.5);  
	\draw (12,-2)--(11,-2.5);		
	\node (e) at (10,-4.3){$\bf{39}$};
	\node (e) at (12,-4.3){$\bf{40}$};
	\node (e) at (12,-1.7){$\bf{41}$};
	\node (e) at (10,-1.7){$\bf{42}$};
	\node (e) at (11,-3.8){$\bf{43}$};
	\node (e) at (11,-2.7){$\bf{44}$};
	\node (e) at (11,-4.8)  {$\bf{\Gamma_7}$};
	\fill[black!100!] (10,-4) circle (.05);
	\fill[black!100!] (12, -2) circle (.05);
	\fill[black!100!] (11, -3.5) circle (.05);
	\fill[black!100!] (12,-4) circle (.05);
	\fill[black!100!] (10,-2) circle (.05);
	\fill[black!100!] (11,-2.5) circle (.05);
	%\filldraw[black!100] (11,-2.5) circle (.05);
	%\fill[black!100!] (10,-2) circle (.05);
	%
	\draw  (13,-4) rectangle (15,-2);
	\draw  (13,-4) -- (15,-2);
	\draw (15,-4)--(14,-3.5);
	\draw (13,-2)--(13.5,-3);
	\draw (15,-2)--(13.5,-3);
	\draw (15,-2)--(14,-3.5);
	\draw (13.5,-3)--(14,-3.5);
	\node (e) at (13,-4.3){$\bf{45}$};
	\node (e) at (15,-4.3){$\bf{46}$};
	\node (e) at (15,-1.7){$\bf{47}$};
	\node (e) at (13,-1.7){$\bf{48}$};
	\node (e) at (14,-3.8){$\bf{49}$};
	\node (e) at (13.24,-3.2){$\bf{50}$};
	\node (e) at (14,-4.8)  {$\bf{\Gamma_8}$};
	\fill[black!100!] (13,-4) circle (.05);
	\fill[black!100!] (15, -2) circle (.05);
	\fill[black!100!] (14, -3.5) circle (.05);
	\fill[black!100!] (15,-4) circle (.05);
	\fill[black!100!] (13,-2) circle (.05);
	\fill[black!100!] (13.5,-3) circle (.05);
	%\filldraw[black!100] (11,-2.5) circle (.05);	
	%
	\draw (7.5,-1)--(7.5,-3);
	\draw (7.5,-1)--(6,1);
	\draw (7.5,-1)--(9,1);
	\node (e) at (7.1,-1){$\bf{51}$};
	\node (e) at (7.1, -3){$\bf{52}$};
	\node (e) at (6,1.2){$\bf{53}$};
	\node (e) at (9,1.2){$\bf{54}$};
	\node (e) at (7.5,-3.5)  {$\bf{\Gamma_9}$};
	\fill[black!100!] (7.5,-1) circle (.05);
	\fill[black!100!] (7.5, -3) circle (.05);
	\fill[black!100!] (6,1) circle (.05);
	\fill[black!100!] (9,1) circle (.05);
	\end{tikzpicture}
	\caption{In this figure there are nine graphs namely, $\Gamma_1,\Gamma_2, \Gamma_3, \Gamma_4, \Gamma_5, \Gamma_6, \Gamma_7,\Gamma_8, \Gamma_9 $ and the numbers that appear in this figure are the vertices of the corresponding graphs. }
	\label{fig:line grapph theory}	
\end{figure}
For more information on the graph theory
we refer to \cite{Bon,Godsil, w}. 

Throughout this paper we consider $G$ as a finite group.  $|G|$ denotes the cardinality of the set $G.$ For a prime $p,$ a group $G$ is said to be a $p$-group if $|G|=p^{r}, r\in \mathbb{N}.$ Recall that a finite group $G$ is nilpotent if and only if it is a direct product of its Sylow $p$-subgroups over primes p dividing $|G|.$ Note that, in a nilpotent group, elements of different prime orders commute. For more information on nilpotent groups we refer to \cite{Hunger,robinsongroup,scott-group}.
\begin{lemma}[Proposition 7.5, \cite{Hunger}]\label{Nilpotent group charecterization thm}
A finite group is nilpotent if and only if it is the direct product of its Sylow subgroups.
\end{lemma}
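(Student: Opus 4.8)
The plan is to prove the two implications separately, with the forward direction ``nilpotent $\Rightarrow$ direct product of its Sylow subgroups'' carrying essentially all of the content. For the easy direction, suppose $G = P_1 \times \cdots \times P_k$, where $P_i$ is the Sylow $p_i$-subgroup. Each $P_i$ is a finite $p$-group, hence nilpotent, since a nontrivial finite $p$-group has nontrivial center and so its upper central series strictly ascends. A finite direct product of nilpotent groups is again nilpotent (the upper central series of the product is the product of the upper central series of the factors), so $G$ is nilpotent.

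For the substantive direction, I would first establish the \emph{normalizer condition}: in a nilpotent group $G$, every proper subgroup $H$ satisfies $H \subsetneq N_G(H)$. To see this, let $1 = Z_0 \trianglelefteq Z_1 \trianglelefteq \cdots \trianglelefteq Z_c = G$ be the upper central series and choose the largest index $i$ with $Z_i \subseteq H$; since $H \neq G$ we have $i < c$, so there exists $x \in Z_{i+1} \setminus H$. Because $Z_{i+1}/Z_i$ is central in $G/Z_i$, for every $h \in H$ the commutator $[h,x]$ lies in $Z_i \subseteq H$, and therefore $x^{-1} h x = h\,[h,x] \in H$. Thus $x$ normalizes $H$ while $x \notin H$, which gives $H \subsetneq N_G(H)$.

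Next I would apply this to the normalizer of a Sylow subgroup. Fix a Sylow $p$-subgroup $P$ and set $N = N_G(P)$. A standard argument shows that $N$ is self-normalizing: $P$ is the unique (hence characteristic) Sylow $p$-subgroup of $N$, so any $g$ with $gNg^{-1} = N$ satisfies $gPg^{-1} = P$, i.e.\ $g \in N$; thus $N_G(N) = N$. If $N$ were a proper subgroup of $G$, the normalizer condition would force $N \subsetneq N_G(N) = N$, a contradiction. Hence $N = G$, that is, $P \trianglelefteq G$, and every Sylow subgroup of $G$ is normal.

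Finally I would assemble the direct product. Writing $|G| = \prod_i p_i^{a_i}$ with normal Sylow subgroups $P_1, \dots, P_k$, coprimality of orders yields $P_i \cap \prod_{j \neq i} P_j = 1$, and normality together with trivial intersection gives $[P_i, P_j] \subseteq P_i \cap P_j = 1$ for $i \neq j$, so distinct Sylow subgroups commute elementwise; a count of orders shows $P_1 P_2 \cdots P_k = G$. These are precisely the defining conditions for $G$ to be the internal direct product $P_1 \times \cdots \times P_k$. The main obstacle in this argument is the passage from nilpotency to normality of the Sylow subgroups; everything there hinges on the normalizer condition extracted from the upper central series, combined with the self-normalizing property of $N_G(P)$.
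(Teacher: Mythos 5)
Your proof is correct and complete. The paper does not actually prove this statement---it imports it as a known result (Proposition~7.5 of Hungerford), so there is no internal argument to compare against; what you have written is precisely the standard textbook proof that the citation points to: the normalizer condition extracted from the upper central series, the self-normalizing property of $N_G(P)$ forcing each Sylow subgroup to be normal, and the coprimality/commutator argument assembling the internal direct product.
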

\begin{lemma}[Corollary 7.6, \cite{Hunger}]\label{m|G, nilpotent G has subgroup of ordder m}
If $G$ is a finite nilpotent group and $m$ divides $|G|,$ then $G$ has a subgroup of order $m.$
\end{lemma}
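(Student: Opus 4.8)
The plan is to exploit the structure theorem for finite nilpotent groups, Lemma~\ref{Nilpotent group charecterization thm}, to reduce the statement to the corresponding fact about $p$-groups, and then to reassemble the pieces into a single subgroup of the desired order. In outline: decompose $G$ into Sylow subgroups, split $m$ according to its prime factorization, produce a subgroup of each prime-power part inside the appropriate Sylow subgroup, and take the direct product.

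First I would invoke Lemma~\ref{Nilpotent group charecterization thm} to write $G \cong P_1 \times P_2 \times \cdots \times P_k$, where $P_i$ is the Sylow $p_i$-subgroup of $G$ attached to the distinct primes $p_1,\dots,p_k$ dividing $|G|$. Writing $|G| = p_1^{a_1}\cdots p_k^{a_k}$, we have $|P_i| = p_i^{a_i}$, and since $m \mid |G|$ its factorization must be $m = p_1^{b_1}\cdots p_k^{b_k}$ with $0 \le b_i \le a_i$ for each $i$. The problem is thereby localized at each prime.

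The crux is the claim that every Sylow subgroup $P_i$ contains a subgroup $H_i$ of order $p_i^{b_i}$. I would prove this by induction on $a_i$: a nontrivial finite $p$-group has nontrivial center, so one can choose a central subgroup $Z \le P_i$ of order $p_i$; applying the inductive hypothesis to the quotient $P_i/Z$, which has order $p_i^{a_i-1}$, yields a subgroup of order $p_i^{b_i-1}$, and pulling it back along the canonical projection $P_i \to P_i/Z$ produces a subgroup of $P_i$ of order $p_i^{b_i}$ (the case $b_i = 0$ being trivial).

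Finally I would form the internal direct product $H = H_1 \times \cdots \times H_k \le G$. Because the $P_i$ have pairwise coprime orders, this product is direct and $|H| = \prod_{i=1}^{k} p_i^{b_i} = m$, giving the required subgroup. I expect the genuine obstacle to lie entirely in the $p$-group step, where the centrality of $Z$ is what makes the pullback well behaved; the direct-product reduction and the final reassembly are routine bookkeeping with the orders $p_i^{b_i}$.
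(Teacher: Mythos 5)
Your proof is correct; it is the standard textbook argument (Sylow decomposition of a nilpotent group via Lemma~\ref{Nilpotent group charecterization thm}, plus the existence of subgroups of every order $p^{b}$ in a $p$-group by induction on the order using the nontrivial center). The paper itself gives no proof of this statement --- it is imported verbatim as Corollary 7.6 of Hungerford --- so there is nothing to compare against; the only nitpick is that what the pullback step really needs is normality of $Z$ (so that $P_i/Z$ exists and preimages of subgroups are subgroups), with centrality being just the convenient way to guarantee it.
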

 \begin{lemma}[Theorem 5.4.10, \cite{Gorenstein-group-book}]\label{p group unique subgrp of order p, g is cyclic}
If $G$ is a $p$-group with a unique subgroup of order $p$ for an odd prime $p,$ then G is cyclic.
 \end{lemma}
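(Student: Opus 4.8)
The plan is to prove the contrapositive formulation: every non-cyclic finite $p$-group $G$ with $p$ odd contains a subgroup isomorphic to $\mathbb{Z}_p\times\mathbb{Z}_p$. This suffices, because $\mathbb{Z}_p\times\mathbb{Z}_p$ has $p+1\ge 2$ subgroups of order $p$, so a group with a \emph{unique} subgroup of order $p$ can contain no such copy and must therefore be cyclic. I would establish the claim by strong induction on $|G|$, where the base case $|G|=p^2$ is immediate since the only non-cyclic group of that order is $\mathbb{Z}_p\times\mathbb{Z}_p$ itself.

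For the inductive step, first I would dispose of two easy reductions. If $G$ has a \emph{proper} non-cyclic subgroup $H$, the induction hypothesis applied to $H$ already produces the desired $\mathbb{Z}_p\times\mathbb{Z}_p$. If $G$ is abelian, the fundamental theorem of finite abelian groups writes $G$ as a product of at least two non-trivial cyclic $p$-groups (as $G$ is non-cyclic), which visibly contains $\mathbb{Z}_p\times\mathbb{Z}_p$. Hence I may assume $G$ is non-abelian and that every proper subgroup of $G$ is cyclic; write $|G|=p^n$ with $n\ge 3$.

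The heart of the argument is this last case. Since subgroups of index $p$ in a $p$-group are normal, a maximal subgroup $M$ is normal, and by assumption $M=\langle a\rangle$ is cyclic of order $p^{n-1}$; I pick $b\in G\setminus M$, so that $G=\langle a,b\rangle$ and $b^p\in M$. Conjugation by $b$ is an automorphism of $\langle a\rangle$ of order dividing $p$, and here the oddness of $p$ enters decisively: $\mathrm{Aut}(\mathbb{Z}_{p^{n-1}})$ is cyclic for odd $p$, so this automorphism is $a\mapsto a^{1+p^{n-2}k}$ with $p\nmid k$ (it is nontrivial, else $G$ is abelian). Consequently $[b,a]=a^{p^{n-2}k}$ generates the central subgroup $\langle z\rangle$ of order $p$, where $z:=a^{p^{n-2}}$, so $G$ has nilpotency class $2$. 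I would then show $b^p$ is central by checking $(1+p^{n-2}k)^p\equiv 1\pmod{p^{n-1}}$ (valid once $n\ge 3$), whence $b^p=a^s\in\langle a\rangle$; moreover $p\mid s$, since otherwise $b^p$ would generate $\langle a\rangle$ and force $G=\langle b\rangle$ to be cyclic.

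Finally I would produce a second order-$p$ element off the cyclic part. Because $G$ has class $2$ with $[b,a]\in\langle z\rangle$ of order $p$, the class-two power formula gives $(ba^i)^p=b^p a^{ip}[a^i,b]^{\binom p2}$, and for odd $p$ the exponent $\binom p2=\tfrac{p(p-1)}2$ is divisible by $p$, so the commutator term vanishes and $(ba^i)^p=a^{s+ip}$. Using $p\mid s$, I can solve $s+ip\equiv 0\pmod{p^{n-1}}$ for $i$, and then $c:=ba^i$ has order $p$ with $c\notin\langle a\rangle\supseteq\langle z\rangle$. Since $z$ is central it commutes with $c$, so $\langle z,c\rangle\cong\mathbb{Z}_p\times\mathbb{Z}_p$, completing the induction. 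I expect the main obstacle to be precisely this non-abelian case: one must exploit $p$ odd twice---first through the cyclicity of $\mathrm{Aut}(\mathbb{Z}_{p^{n-1}})$ and second through $p\mid\binom p2$---and it is exactly these facts that fail for $p=2$, which is why the generalized quaternion groups survive as genuine counterexamples there.
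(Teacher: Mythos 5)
Your proof is correct, but there is nothing in the paper to compare it against: the statement is quoted verbatim as Theorem 5.4.10 of Gorenstein's book and is used as a black box, with no proof supplied. Your argument is the standard textbook proof of this classical fact, reduced to showing that a non-cyclic $p$-group ($p$ odd) contains a copy of $\mathbb{Z}_p\times\mathbb{Z}_p$. All the steps check out: the reduction to a non-abelian group all of whose proper subgroups are cyclic; the normal maximal subgroup $M=\langle a\rangle$ of order $p^{n-1}$ with $b\notin M$ inducing an automorphism of order $p$, necessarily $a\mapsto a^{1+p^{n-2}k}$ because $\mathrm{Aut}(\mathbb{Z}_{p^{n-1}})$ is cyclic for odd $p$; the resulting class-$2$ structure with $[b,a]\in\langle z\rangle\le Z(G)$ and $b^p=a^s$ with $p\mid s$ (else $G$ would be cyclic); and the use of $p\mid\binom{p}{2}$ to get $(ba^i)^p=a^{s+ip}$ and solve for $i$, producing an order-$p$ element $c\notin\langle a\rangle$ commuting with the central $z$, hence $\langle z,c\rangle\cong\mathbb{Z}_p\times\mathbb{Z}_p$. (Minor remark: the centrality of $b^p$ is immediate from $b^p\in\langle a\rangle$ and $G=\langle a,b\rangle$, so the congruence $(1+p^{n-2}k)^p\equiv 1\pmod{p^{n-1}}$ is not really needed for that step.) What your write-up buys over the paper's citation is self-containedness and a clear isolation of the two places where $p$ odd is essential, which is exactly why $Q_{2^n}$ escapes the conclusion for $p=2$; the paper's choice to cite instead is reasonable since the lemma is classical and only used as an auxiliary tool.
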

For any element $g \in G, \text{o}(g)$ denotes the order of the element $g \in G.$ Throughout this article $v^{(m)}$ denotes a vertex of order $m$ of  $G.$ Let $m$ and $n$ be any two positive integers, then the greatest common divisor of $m$ and $n$ is denoted by $\text{gcd}(m, n).$ The \emph{exponent} of a group $G,$ denoted by $\text{exp}(G)$ is defined as the least common multiple of the orders of all elements of the group. If there is no least common multiple, the exponent is taken to be infinity (or sometimes zero, depending on the convention). The Euler's phi function $\phi(n)$ is the number of integers $k$ in the range $1 \leq k \leq n$ for which the  $\text{gcd}(n, k)$ is equal to $1.$ The set $\{1, 2, \cdots, n\}$ is denoted by $[n].$ Throughout this paper, the group operation of any abelian group is taken to be additive. 

A number of important graph classes, including line graphs, cographs, chordal graphs, split graphs, and threshold graphs, can be defined either structurally or in terms of forbidden induced subgraphs. Recently, Cameron, Manna and Mehatari in \cite{forbidden-cameron-manna-Mehatari}, determined completely the groups whose power graph is a threshold or split graph. Moreover, they determined completely the finite nilpotent groups whose power graph is a cograph. Motivated by this work, in this paper we give attention is on the following problems:
\begin{itemize}
\item
Characterize all finite groups $G$ such that  $\mathcal{P}(G)$ is a line graph of some graph $\Gamma.$ 
\item
Characterize all finite groups $G$ such that  $\mathcal{P}^{**}(G)$ is a line graph of some graph $\Gamma.$ 
\end{itemize}
We obtain the following two main results.
\begin{theorem}\label{Thm:P(G) is line graph for nilpotent group} Let $G$ be a nilpotent group. Then $\mathcal{P}(G)$ is a line graph of some graph $\Gamma$ if and only if $G$ is cyclic $p$-group.	
\end{theorem}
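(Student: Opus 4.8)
The whole argument will run through Beineke's forbidden-subgraph criterion (Lemma~\ref{line graph}); of the nine graphs in Figure~\ref{fig:line grapph theory} only two are needed, namely the claw $\Gamma_9=\Gamma_{1,3}=K_{1,3}$ and the graph $\Gamma_2$, which is $K_5$ with a single edge deleted. The forward implication is immediate: if $G$ is a cyclic $p$-group then $\mathcal{P}(G)=K_{|G|}$ by Lemma~\ref{thm:P(G) compltele iff G cylcic p group}, and since a complete graph $K_n=L(\Gamma_{1,n})$ is the line graph of a star, $\mathcal{P}(G)$ is a line graph. So the real task is the converse, which I would prove in contrapositive form: assuming $G$ is nilpotent but \emph{not} a cyclic $p$-group, I will exhibit an induced $\Gamma_9$ or $\Gamma_2$ inside $\mathcal{P}(G)$. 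The recurring device is that a dominating vertex (in particular the identity $e$) is joined to every other vertex, so an induced subgraph on a set of dominating vertices together with an extra set $S$ keeps precisely the $\mathcal{P}(G)$-edges inside $S$ together with all cross-edges.

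The proof splits according to whether $G$ is cyclic. \textbf{Cyclic case.} Here $n:=|G|$ is not a prime power, hence has two distinct prime divisors $p\neq q$, and $\mathcal{P}(G)$ is not complete by Lemma~\ref{thm:P(G) compltele iff G cylcic p group}; concretely an element $a$ of order $p$ and an element $b$ of order $q$ are non-adjacent, since neither is a power of the other. The plan is then to count dominating vertices: by the dominating-vertex classification (Lemma~\ref{lemma: classification of dominating vertices of power graph}) these are exactly $e$ together with the $\phi(n)$ generators, so $|\Dom(\mathcal{P}(G))|=1+\phi(n)\geq 3$, because $\phi(n)\geq 2$ whenever $n$ has two distinct prime divisors. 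Picking any three dominating vertices $d_1,d_2,d_3$ (necessarily different from the non-dominating $a,b$), the induced subgraph on $\{d_1,d_2,d_3,a,b\}$ is $K_5$ with only the edge $ab$ missing, i.e.\ an induced $\Gamma_2$, so $\mathcal{P}(G)$ is not a line graph.

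\textbf{Non-cyclic case.} Writing $G$ as the direct product of its Sylow subgroups (Lemma~\ref{Nilpotent group charecterization thm}), non-cyclicity of $G$ forces some Sylow $p$-subgroup $P$ to be non-cyclic; since $P$ induces $\mathcal{P}(P)$ inside $\mathcal{P}(G)$, it suffices to find three pairwise non-adjacent vertices in $\mathcal{P}(P)$ and adjoin $e$ to obtain an induced claw $\Gamma_9$. I would distinguish two possibilities for $P$. If $P$ has more than one subgroup of order $p$, then it has at least $1+p\geq 3$ of them (their number is $\equiv 1\pmod p$), and three order-$p$ elements chosen from three distinct such subgroups are pairwise non-adjacent, because two elements of order $p$ are adjacent only when they generate the same subgroup. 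Otherwise $P$ has a unique subgroup of order $p$; Lemma~\ref{p group unique subgrp of order p, g is cyclic} rules this out for odd $p$ when $P$ is non-cyclic, so $p=2$ and $P$ is generalized quaternion, and here a short count shows $Q_{2^m}$ has $1+2^{m-2}\geq 3$ cyclic subgroups of order $4$; three order-$4$ generators taken from distinct such subgroups are pairwise non-adjacent for the same reason. Either way $\{e,a_1,a_2,a_3\}$ induces $K_{1,3}=\Gamma_9$.

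The step I expect to be the real obstacle is recognizing that the two cases genuinely require \emph{different} forbidden subgraphs. For a cyclic group of non-prime-power order the power graph turns out to have independence number $2$ (already $\mathbb{Z}_6$, whose power graph is $K_3$ joined to $K_2\cup K_1$, shows this), so there is no claw at all and the naive non-cyclic argument collapses; the fix is to trade the claw for $\Gamma_2$, which is available precisely because such groups carry at least three dominating vertices. The two delicate points are therefore the exact computation $|\Dom(\mathcal{P}(G))|=1+\phi(n)$ via Lemma~\ref{lemma: classification of dominating vertices of power graph}, and the handling of a generalized-quaternion Sylow subgroup through its order-$4$ elements (it has a unique involution, so order-$2$ elements cannot supply an independent triple).
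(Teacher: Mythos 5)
Your proposal is correct and follows essentially the same route as the paper: the forward direction via completeness of $\mathcal{P}(G)$ for cyclic $p$-groups, an induced $\Gamma_2$ ($K_5$ minus an edge) assembled from mutually adjacent vertices together with two non-adjacent elements of distinct prime orders in the cyclic non-prime-power case, and an induced claw $K_{1,3}$ on $e$ plus three pairwise non-adjacent elements in the non-cyclic case. The only differences are organizational --- you reduce the non-cyclic case to a single non-cyclic Sylow subgroup and count order-$p$ subgroups via the congruence modulo $p$, where the paper splits into non-cyclic abelian and non-abelian nilpotent cases and argues through the center --- but the forbidden subgraphs and the witnessing configurations are the same.
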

\begin{theorem}\label{Thm:P**(G) is line graph for nilpotent group} Let $G$ be a nilpotent group (except non abelian $2$-groups). Then $\mathcal{P}^{**}(G)$ is a line graph of some graph $\Gamma$ if and only if $G$ is one of the following:
\begin{enumerate}
\item[(a)] 
$G\cong\mathbb{Z}_{p^t}, t\geq 1$
\item[(b)]
$G\cong \mathbb{Z}_{pq}$
\item[(c)]
$G\cong \mathbb{Z}_2\times \mathbb{Z}_{2^{2}}$
\item[(d)]	
$G\cong \mathbb{Z}_{2^2}\times \mathbb{Z}_{2^2}$
\item[(e)]
$G\cong\underbrace{\mathbb{Z}_p\times \cdots\times \mathbb{Z}_p}_{k \text{ times,  } k\geq 2}$
\item[(f)]
$G \text{ is non abelian  } p \text{ group and } G=\mathbb{Z}_{p^{t_1}}\cup \cdots\cup \mathbb{Z}_{p^{t_{\ell}}},$ where $\ell$ is the number of distinct subgroups of order $p.$ 
\end{enumerate}		
\end{theorem}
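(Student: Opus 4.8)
The whole argument rests on Beineke's characterisation (Lemma~\ref{line graph}) together with the following reformulation: in any power graph two vertices $u,v$ are joined precisely when $\langle u\rangle\subseteq\langle v\rangle$ or $\langle v\rangle\subseteq\langle u\rangle$, so $\mathcal P(G)$ is the comparability graph of the poset of cyclic subgroups of $G$, each cyclic subgroup $C$ appearing as a clique on its $\phi(|C|)$ generators and two such cliques being completely joined exactly when one subgroup contains the other. Deleting the dominating vertices, which are listed by Lemma~\ref{lemma: classification of dominating vertices of power graph}, yields $\mathcal P^{**}(G)$. A first consequence is the \emph{claw test}: the subgroups comparable to a fixed $\langle x\rangle$ split into those containing and those contained in it, and a leaf above together with a leaf below would itself be a comparable pair; hence an induced $K_{1,3}=\Gamma_9$ occurs at $x$ if and only if $x$ lies under three pairwise incomparable cyclic subgroups with non-dominating generators, or $\langle x\rangle$ has three pairwise incomparable cyclic subgroups of this kind.

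For the ``if'' direction I would read off $\mathcal P^{**}(G)$ in each family. In (a) the power graph is complete by Lemma~\ref{thm:P(G) compltele iff G cylcic p group}, every vertex is dominating, and $\mathcal P^{**}(G)$ is empty. In (b), (e) and (f) the dominating vertices are the identity (and, for $\mathbb Z_{pq}$, the generators), and after their removal the surviving elements fall into pairwise non-adjacent cliques---one per subgroup $\mathbb Z_p,\mathbb Z_q$ for $\mathbb Z_{pq}$, and one per maximal cyclic subgroup for (e) and (f), these meeting only in the deleted identity---so $\mathcal P^{**}(G)$ is a disjoint union of complete graphs, the line graph of a disjoint union of stars. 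In (c) and (d) two maximal cyclic subgroups of order $4$ may share their unique involution; after deleting the identity each such pair becomes two triangles glued at that involution (a bowtie), so $\mathcal P^{**}(\mathbb Z_2\times\mathbb Z_4)$ is a bowtie with two isolated vertices and $\mathcal P^{**}(\mathbb Z_4\times\mathbb Z_4)$ is three disjoint bowties. A bowtie is a line graph (partition it into its two triangles, every vertex lying in at most two cliques) and disjoint unions of line graphs are line graphs, so none of $\Gamma_1,\dots,\Gamma_9$ can be induced.

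For the ``only if'' direction I argue by contraposition, writing $G=P_1\times\cdots\times P_r$ via Lemma~\ref{Nilpotent group charecterization thm} and treating the prime-power case first. For odd $p$, Lemma~\ref{p group unique subgrp of order p, g is cyclic} shows that a non-cyclic abelian $p$-group that is not elementary abelian contains a copy of $\mathbb Z_{p^2}\times\mathbb Z_p$, inside which one order-$p$ element lies under all $p$ cyclic subgroups of order $p^2$; since $p\ge 3$ this is a claw, so only (a) and (e) survive among abelian odd $p$-groups. A non-abelian odd $p$-group is a line graph exactly under condition (f), i.e.\ when its maximal cyclic subgroups intersect trivially: if two of them meet in a subgroup $Z$ of order $p$, then $Z$ has $p-1\ge 2$ nonidentity elements, all adjacent to everything in either cyclic subgroup, and two generators from each maximal cyclic subgroup induce the graph $K_2\vee 2K_2=\Gamma_3$ (two copies of $K_4$ glued along an edge). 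For the abelian $2$-groups, counting square roots gives exactly $2^{a+b-1}$ cyclic subgroups of order $4$ through a fixed bottom involution of $\mathbb Z_4^{a}\times\mathbb Z_2^{b}$, which is $\le 2$ iff $a+b\le 2$; combined with the fact that an element of order $\ge 8$ in a non-cyclic $2$-group again produces a claw, this pins the admissible $2$-groups down to $\mathbb Z_{2^t}$, $\mathbb Z_2^{k}$, $\mathbb Z_2\times\mathbb Z_4$ and $\mathbb Z_4\times\mathbb Z_4$.

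It remains to treat at least two primes. If $G$ is non-cyclic then some Sylow subgroup is non-cyclic and hence has at least three subgroups of order $p$; extending a fixed element $y$ of a second prime by each of them produces three pairwise incomparable cyclic subgroups of order $pq$ through $y$, that is, a claw. So only cyclic groups remain, and for $\mathbb Z_{pq}$ one checks directly that $\mathcal P^{**}(\mathbb Z_{pq})=K_{p-1}\sqcup K_{q-1}$. The heart of the proof, and the step I expect to be hardest, is to show that every cyclic $G=\mathbb Z_n$ whose order has a repeated prime or at least three prime divisors fails to be a line graph: such groups are \emph{claw-free} (the divisor lattice of $n$ never carries a three-element antichain comparable to a single non-dominating node), so $\Gamma_9$ is useless and one must instead locate one of the six-vertex graphs $\Gamma_3,\dots,\Gamma_8$. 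Concretely, for $\mathbb Z_{p^2q}$ the two chains $\mathbb Z_p\subset\mathbb Z_{p^2}$ and $\mathbb Z_q\subset\mathbb Z_{pq}$ sharing the bottom $\mathbb Z_p$, blown up by their generator-cliques, already contain an induced $\Gamma_4$ (for instance in $\mathbb Z_{12}$ the six elements of orders $6,6,2,3,3,4$ induce exactly $\Gamma_4$), and a parallel search disposes of $\mathbb Z_{pqr}$ and the remaining mixed orders; carrying this out uniformly over all such $n$ is the main technical obstacle, using Lemma~\ref{m|G, nilpotent G has subgroup of ordder m} to guarantee the subgroups that build the forbidden configuration.
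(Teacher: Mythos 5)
Your overall strategy coincides with the paper's: Beineke's forbidden-subgraph criterion (Lemma \ref{line graph}), a case split along the Sylow decomposition, claw-hunting, and explicit computation of $\mathcal{P}^{**}$ for the surviving groups. Your ``if'' direction is essentially correct and matches the paper (disjoint unions of cliques for (b), (e), (f); a bowtie plus two isolated vertices for $\mathbb{Z}_2\times\mathbb{Z}_{2^2}$ and three bowties for $\mathbb{Z}_{2^2}\times\mathbb{Z}_{2^2}$), as are your square-root count for abelian $2$-groups and your $\Gamma_3=K_2\vee 2K_2$ construction for a non-abelian odd $p$-group with two maximal cyclic subgroups meeting nontrivially. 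But the ``only if'' direction has two genuine gaps. The larger one is the cyclic case, which you defer to a ``parallel search'': the premise you offer for it --- that every cyclic group whose order has a repeated prime or at least three prime divisors is claw-free --- is false. For $n=p^3q^2$ the element of order $p$ lies under the pairwise incomparable cyclic subgroups of orders $p^3$, $p^2q$ and $pq^2$, and for $n$ with four prime divisors an element of order $pqr$ lies over elements of orders $p,q,r$; both are induced claws. In fact the paper's Proposition \ref{prop:P^{**}(G), does not contain star graph Gamma(1, 3), G cylic} shows the claw already kills everything except $\mathbb{Z}_{p^t}$, $\mathbb{Z}_{p^tq}$, $\mathbb{Z}_{p^2q^2}$ and $\mathbb{Z}_{pqr}$; the graph $\Gamma_2$ then kills those except $\mathbb{Z}_{12}$ and $\mathbb{Z}_{18}$, which finally require ad hoc $\Gamma_4$ and $\Gamma_3$ configurations (your $\mathbb{Z}_{12}$ computation is exactly the first of these). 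Without this chain the hardest part of the theorem is simply missing.

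The second gap is in the mixed-prime non-cyclic case, where you argue that ``some Sylow subgroup is non-cyclic and hence has at least three subgroups of order $p$.'' That implication fails precisely when the non-cyclic Sylow subgroup is a generalized quaternion group, which has a unique involution; for $G=Q_8\times\mathbb{Z}_3$ your recipe produces no claw at all. Such groups are nilpotent, non-abelian, not $2$-groups, and have two prime divisors, so they fall squarely inside the theorem and must be excluded. They can be: one may use the three pairwise incomparable cyclic subgroups of order $4$ in $Q_8$ extended by the order-$3$ element, or, as the paper does in Theorem \ref{thm: G non ab nilpotent 2 prime divisors, P**(G) not a line graph}, build a $\Gamma_3$ inside a cyclic subgroup of order $2^kp_2$ with $k\geq 2$ --- but some such argument has to be supplied.
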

If one takes $a, b, c$ in $\mathbb{Z}_p$ for an odd prime $p,$ then one has the \emph{Heisenberg group} modulo $p.$ It is a group of order $p^3$ with generators $x, y$ and relations:
\[z=xyx^{-1}y^{-1}, x^{p}=y^{p}=z^{p}=1, xz=zx, yz=zy.\] 
\begin{corollary}
Let $G$ be the Heisenberg group modulo $p.$ Then $\mathcal{P}^{**}(G)$ is a line graph. 	
\end{corollary}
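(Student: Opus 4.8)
The plan is to derive this corollary as the special case of Theorem~\ref{Thm:P**(G) is line graph for nilpotent group}, part~(f), corresponding to exponent $p$. Let $G$ denote the Heisenberg group modulo $p$ for an odd prime $p$. First I would record the two structural facts that are needed: $G$ is a non-abelian group of order $p^3$, and every non-identity element of $G$ has order $p$. Non-abelianness is immediate from the relation $z=xyx^{-1}y^{-1}\neq 1$, which also shows that $G$ has nilpotency class $2$ with $[x,y]=z$ lying in the center.

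The only genuine computation is the exponent claim, and this is exactly where the hypothesis that $p$ is odd enters. Writing a typical element as $x^ay^bz^c$ and using that $G$ has class $2$ (so the collector formula $(gh)^n=g^nh^n[h,g]^{\binom{n}{2}}$ applies, with $[y^b,x^a]$ a power of the central element $z$), I would compute $(x^ay^bz^c)^p = x^{ap}y^{bp}z^{cp}\,z^{\pm ab\binom{p}{2}}$. Since $x^p=y^p=z^p=1$ and $\binom{p}{2}=p(p-1)/2$ is divisible by $p$ precisely because $p$ is odd, every factor is trivial, so $(x^ay^bz^c)^p=e$. Hence $G$ has exponent $p$. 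For $p=2$ this fails, since the Heisenberg group is then $D_8$ with elements of order $4$, so the odd hypothesis is genuinely essential.

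With exponent $p$ in hand, every maximal cyclic subgroup of $G$ has order $p$, so $G$ is the set-theoretic union of its subgroups of order $p$; counting the $p^3-1$ non-identity elements in blocks of $p-1$ gives $\ell=(p^3-1)/(p-1)=p^2+p+1$ such subgroups, which intersect pairwise in $\{e\}$. Thus $G$ is a non-abelian $p$-group of the form $\mathbb{Z}_{p^{t_1}}\cup\cdots\cup\mathbb{Z}_{p^{t_\ell}}$ with $t_1=\cdots=t_\ell=1$ and $\ell$ equal to the number of distinct subgroups of order $p$. This is exactly case~(f) of Theorem~\ref{Thm:P**(G) is line graph for nilpotent group}, so the theorem immediately yields that $\mathcal{P}^{**}(G)$ is a line graph.

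Alternatively, I could argue directly rather than quoting case~(f): by Lemma~\ref{lemma: classification of dominating vertices of power graph} the identity is the only dominating vertex of $\mathcal{P}(G)$ (as $G$ is neither cyclic nor generalized quaternion), so $\mathcal{P}^{**}(G)$ is the graph on the non-identity elements in which $u\sim v$ iff $\langle u\rangle=\langle v\rangle$. Since all such elements have order $p$, this is a disjoint union of $p^2+p+1$ copies of $K_{p-1}$, and a disjoint union of complete graphs contains none of the nine forbidden subgraphs of Lemma~\ref{line graph}, because each $K_n$ is the line graph of the star $\Gamma_{1,n}$ and line graphs are closed under disjoint union. I expect the exponent computation to be the only real obstacle here; the remaining steps are routine bookkeeping.
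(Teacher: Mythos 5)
Your proposal is correct and follows the same route as the paper, which simply notes that every non-identity element of the Heisenberg group modulo $p$ has order $p$ and then invokes part (f) of Theorem \ref{Thm:P**(G) is line graph for nilpotent group}. The only difference is that you supply the collector-formula computation justifying the exponent-$p$ claim (which the paper asserts without proof) and sketch an alternative direct verification; both additions are sound but not needed beyond what the paper records.
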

\begin{proof}
In the Heisenberg group modulo $p,$ the order of each element is $p.$ Therefore by the part (f) of Theorem \ref{Thm:P**(G) is line graph for nilpotent group}, $\mathcal{P}^{**}(G)$ is a line graph. 
\end{proof}	
\begin{corollary}
Let $G$ be a non abelian group such that $\text{exp}(G)=p.$ Then $\mathcal{P}^{**}(G)$ is a line graph.	
\end{corollary}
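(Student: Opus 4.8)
The plan is to reduce this corollary to part (f) of Theorem \ref{Thm:P**(G) is line graph for nilpotent group}. First I would unwind the hypothesis: since $\text{exp}(G)=p$ is the least common multiple of the orders of all elements and $p$ is prime, every non-identity element of $G$ must have order exactly $p$. From this I would deduce that $G$ is a $p$-group. Indeed, by Cauchy's theorem every prime $q$ dividing $|G|$ is the order of some element; but every non-identity element has order $p$, so $p$ is the only prime divisor of $|G|$, whence $|G|=p^{r}$ for some $r$.

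Next I would show that $p$ must be odd, which is the step I expect to be the main point to get right, because it is exactly what places $G$ outside the class excluded in the statement of Theorem \ref{Thm:P**(G) is line graph for nilpotent group}. If $p=2$, then $g^{2}=e$ for every $g\in G$, so each element is its own inverse, and the standard computation $(ab)^{2}=e$ forces $ab=ba$; thus $G$ would be abelian, contradicting the hypothesis that $G$ is non abelian. Hence $p$ is odd, and $G$ is a non abelian $p$-group that is not a non abelian $2$-group, so Theorem \ref{Thm:P**(G) is line graph for nilpotent group} is applicable to $G$.

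Finally I would verify that $G$ meets the structural description in part (f). Because every non-identity element has order $p$, each cyclic subgroup generated by such an element is isomorphic to $\mathbb{Z}_{p}$, and $G$ is the union of all of its subgroups of order $p$. Writing $\ell$ for the number of distinct subgroups of order $p$, this is precisely the decomposition $G=\mathbb{Z}_{p^{t_{1}}}\cup\cdots\cup\mathbb{Z}_{p^{t_{\ell}}}$ appearing in part (f), with every $t_{i}=1$. Invoking that part of the theorem then yields that $\mathcal{P}^{**}(G)$ is a line graph. Everything after the parity argument is a direct translation of the hypotheses into the form required by Theorem \ref{Thm:P**(G) is line graph for nilpotent group}, so no further calculation is needed.
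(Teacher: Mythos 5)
Your proof is correct and follows essentially the same route as the paper: deduce that $G$ is a non abelian $p$-group in which every non-identity element has order $p$, observe that $G$ is therefore the union of its subgroups of order $p$, and invoke part (f) of Theorem \ref{Thm:P**(G) is line graph for nilpotent group}. Your explicit check that $p$ must be odd (so that $G$ is not a non abelian $2$-group, the case excluded from that theorem) is a detail the paper's one-line proof leaves implicit, and it is worth having.
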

\begin{proof}
$G$ is non abelian group and $\text{exp}(G)=p.$ So $G$ is non abelian $p$-group and order of each element of $G$ is $p.$ Hence by by the part (f) of Theorem \ref{Thm:P**(G) is line graph for nilpotent group}, $\mathcal{P}^{**}(G)$ is a line graph.  	
\end{proof}	
Moreover, for non-abelian 2-groups, we prove two theorems.
\begin{theorem}\label{THm: Line graph of generalized quaternion group}
Let $Q_{2^n}$ be the generalized quaternion group. Then $\mathcal{P}^{**}(Q_{2^n})$ is a line graph.	
\end{theorem}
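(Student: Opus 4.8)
The plan is to pin down the exact isomorphism type of $\mathcal{P}^{**}(Q_{2^n})$ and then observe that it falls into a class of graphs that is trivially line-graphic. First I would determine the dominating vertices of $\mathcal{P}(Q_{2^n})$. By the classification in Lemma \ref{lemma: classification of dominating vertices of power graph}, the only vertices adjacent to all others are the identity $e$ and the unique involution $z=x^{2^{n-2}}=y^2$. Hence $\mathcal{P}^{**}(Q_{2^n})$ is precisely the induced subgraph of $\mathcal{P}(Q_{2^n})$ on the set $Q_{2^n}\setminus\{e,z\}$, and the whole task reduces to describing adjacencies among these vertices.

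Next I would split the surviving vertices into those lying inside the cyclic subgroup $\langle x\rangle\cong\mathbb{Z}_{2^{n-1}}$ and those lying outside it. Since $\langle x\rangle$ is a cyclic $2$-group, Lemma \ref{thm:P(G) compltele iff G cylcic p group} shows that the induced subgraph on $\langle x\rangle$ is complete; deleting the two vertices $e$ and $z$ therefore leaves a clique $K_{2^{n-1}-2}$. For an element outside $\langle x\rangle$, write it as $g=x^ay$; using the relation $yx^a=x^{-a}y$ one computes $g^2=x^a(yx^a)y=x^a x^{-a}y^2=y^2=z$, so every such $g$ has order $4$ with $\langle g\rangle=\{e,g,z,g^{-1}\}$ and $\langle g\rangle\cap\langle x\rangle=\{e,z\}$. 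The key adjacency analysis is then this: the only powers of $g$ that survive the deletion of $e,z$ is $g^{-1}$ (which is again outside $\langle x\rangle$, and is distinct from $g$ because $o(g)=4$), while no surviving element of $\langle x\rangle$ can be a power of $g$ nor can $g$ be a power of it. Thus each outside vertex is joined in $\mathcal{P}^{**}(Q_{2^n})$ only to its inverse, there are no edges between the inside clique and the outside part, and the $2^{n-1}$ outside vertices break up into $2^{n-2}$ disjoint edges. Collecting this,
\[
\mathcal{P}^{**}(Q_{2^n})\;\cong\;K_{2^{n-1}-2}\;\bigoplus\;\underbrace{K_2\bigoplus\cdots\bigoplus K_2}_{2^{n-2}\text{ copies}},
\]
a disjoint union of complete graphs.

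Finally I would conclude that such a graph is a line graph. The cleanest route is constructive: the star $\Gamma_{1,m}$ satisfies $L(\Gamma_{1,m})=K_m$, and the line-graph operation respects disjoint unions, so taking $\Gamma$ to be the disjoint union of one star $\Gamma_{1,\,2^{n-1}-2}$ together with $2^{n-2}$ copies of $\Gamma_{1,2}$ yields $L(\Gamma)\cong\mathcal{P}^{**}(Q_{2^n})$. Alternatively, one may invoke Lemma \ref{line graph}: a disjoint union of cliques contains no induced $P_4$ and, in fact, no induced path on three vertices at all, whereas each of the nine forbidden graphs of Figure \ref{fig:line grapph theory} is connected, non-complete, and on at least three vertices, hence contains an induced $P_3$; therefore none of them can appear as an induced subgraph of $\mathcal{P}^{**}(Q_{2^n})$, and it is a line graph. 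I expect the main obstacle to be the adjacency bookkeeping for the elements outside $\langle x\rangle$: one must verify uniformly that $g^2=z$ and that $\langle g\rangle\cap\langle x\rangle=\{e,z\}$, so that after deleting the dominating vertices each $g$ retains exactly one neighbour, namely $g^{-1}$. Once this is in hand, recognizing the resulting cluster graph as a line graph is immediate.
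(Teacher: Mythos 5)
Your argument is correct and follows essentially the same route as the paper: decompose $Q_{2^n}\setminus\{e,z\}$ into the clique coming from the unique cyclic subgroup of order $2^{n-1}$ plus $2^{n-2}$ disjoint edges coming from the order-$4$ subgroups, and realize the resulting disjoint union of cliques as the line graph of a disjoint union of stars. Your bookkeeping is in fact the more careful one — the paper writes $K_{2^{n-1}}$ and $\Gamma_{1,2^{n-1}}$ where, after deleting the two dominating vertices $e$ and $z$, the correct sizes are $K_{2^{n-1}-2}$ and $\Gamma_{1,2^{n-1}-2}$ as you have them.
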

\begin{theorem}\label{thm: P**(D_n) is a line graph if and only}
Let $D_n$ be the dihedral group of order $2n, (n\geq 3).$ Then $\mathcal{P}^{**}(D_n)$ is a line graph if and only if $n=2^k, k\in \mathbb{N}.$
\end{theorem}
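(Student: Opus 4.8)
The plan is to transfer the problem from $D_n$ to the rotation subgroup $\mathbb{Z}_n$. First I would determine the dominating vertices of $\mathcal{P}(D_n)$. For $n\geq 3$ the group $D_n$ is neither cyclic, nor a cyclic $p$-group, nor a generalized quaternion group, so Lemma~\ref{lemma: classification of dominating vertices of power graph} leaves the identity $e$ as the only dominating vertex; hence $\mathcal{P}^{**}(D_n)=\mathcal{P}^{*}(D_n)$ and we merely delete $e$. Every reflection has order $2$ and generates a subgroup meeting the rotations only in $e$, so in $\mathcal{P}(D_n)$ a reflection is joined to $e$ alone; once $e$ is removed the $n$ reflections are isolated vertices. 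The rotations induce precisely $\mathcal{P}^{*}(\mathbb{Z}_n)$, the power graph of $\mathbb{Z}_n$ with its identity deleted, whose adjacency reduces to a divisibility condition: two non-identity elements are joined exactly when the order of one divides the order of the other. Because adjoining isolated vertices to a graph never destroys the line-graph property (each may be realised as a separate $K_2$-component of a root graph), $\mathcal{P}^{**}(D_n)$ is a line graph if and only if $\mathcal{P}^{*}(\mathbb{Z}_n)$ is.

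For the forward implication, assume $n=2^{k}$. Then $\mathbb{Z}_{2^{k}}$ is a cyclic $2$-group, so by Lemma~\ref{thm:P(G) compltele iff G cylcic p group} its power graph is complete; deleting the identity leaves the complete graph $K_{2^{k}-1}$, which is the line graph of the star $\Gamma_{1,2^{k}-1}$. Adding back the isolated reflections keeps the graph a line graph, so $\mathcal{P}^{**}(D_n)$ is a line graph whenever $n$ is a power of two.

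The converse is where the real work lies, and I would attack it through the generators of $\mathbb{Z}_n$. Each of the $\phi(n)$ generators is adjacent to every other vertex, so $\mathcal{P}^{*}(\mathbb{Z}_n)$ carries at least $\phi(n)$ pairwise-adjacent dominating vertices. The key auxiliary fact I would establish is that a line graph cannot support three pairwise-adjacent dominating vertices unless it is complete: interpreting the three as pairwise-incident edges of a root graph $H$, they either pass through a common vertex $w$ --- in which case every edge incident to all three must also pass through $w$, forcing $H$ to be a star and $L(H)=K_m$ --- or they form a triangle, which by the same incidence constraint forces $H$ to be that triangle and $L(H)=K_3$. Consequently, as soon as $\phi(n)\geq 3$, a line graph $\mathcal{P}^{*}(\mathbb{Z}_n)$ must be complete, and by Lemma~\ref{thm:P(G) compltele iff G cylcic p group} this forces $n$ to be a prime power. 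When instead $n$ has three pairwise non-dividing divisors $d_1,d_2,d_3>1$ (for instance when $n$ has at least three distinct prime factors), choosing elements of those orders together with a dominating generator exhibits an induced claw $\Gamma_9$, so Lemma~\ref{line graph} rules out a line graph directly.

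The main obstacle I anticipate is the final reconciliation of these cases with the sharp condition $n=2^{k}$: the completeness argument only excludes the non-prime-power orders with $\phi(n)\geq 3$, so one must still treat the prime-power orders involving an odd prime and the small orders $n$ with $\phi(n)\leq 2$ by hand. For each such $n$ the intended route is to display one of the nine Beineke graphs of Lemma~\ref{line graph} as an induced subgraph of $\mathcal{P}^{*}(\mathbb{Z}_n)$, typically a claw built on elements of incomparable orders beneath a dominating generator, and the delicate point is to certify that such an obstruction is present for every $n$ that is not a power of two. Carrying out this uniform forbidden-subgraph search, together with the finite check of the exceptional small orders, is the crux of the proof.
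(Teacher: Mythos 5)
Your reduction is sound and, as far as it goes, correct: by Lemma \ref{lemma: classification of dominating vertices of power graph} the identity is the only dominating vertex of $\mathcal{P}(D_n)$ for $n\geq 3$, the $n$ reflections become isolated vertices of $\mathcal{P}^{**}(D_n)$, the rotations induce $\mathcal{P}^{*}(\mathbb{Z}_n)$, and isolated vertices are irrelevant to being a line graph; your lemma that a line graph with three pairwise adjacent dominating vertices must be complete is also correct, and it is a genuinely different device from anything in the paper (which instead quotes its theorems on nilpotent groups). The fatal problem is the residue you defer to a ``finite check'': the odd prime powers and the orders with $\phi(n)\leq 2$. For $n=p^m$ with $p$ an odd prime, Lemma \ref{thm:P(G) compltele iff G cylcic p group} makes $\mathcal{P}^{*}(\mathbb{Z}_{p^m})$ the complete graph $K_{p^m-1}$, so $\mathcal{P}^{**}(D_{p^m})$ is the disjoint union of $K_{p^m-1}$ with $p^m$ isolated vertices, which is the line graph of the disjoint union of $\Gamma_{1,p^m-1}$ with $p^m$ copies of $K_2$; there is no Beineke obstruction to be found, and the forbidden-subgraph search you propose cannot succeed. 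The same happens at $n=6$: $\mathcal{P}^{*}(\mathbb{Z}_6)$ is $K_5$ minus the two edges joining the involution to the two elements of order $3$, and one checks directly that this is the line graph of the star $K_{1,4}$ with one extra edge joining two of its leaves. So the gap in your argument is not unfinished bookkeeping but an impossibility: your own (correct) reduction shows that the theorem as stated fails for every odd prime power $n$ and for $n=6$, and what your method actually proves is that $\mathcal{P}^{**}(D_n)$ is a line graph if and only if $n$ is a prime power or $n=6$.

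For comparison, the paper disposes of the case ``$n$ not a power of $2$'' by citing Theorem \ref{thm: G non ab nilpotent 2 prime divisors, P**(G) not a line graph}, but that theorem is stated for nilpotent groups and its proof manufactures elements of order $p_1p_2$ (equivalently, commuting elements of coprime orders), which exist in a nilpotent group but not in $D_n$ unless $n$ is a power of $2$; the citation is therefore vacuous exactly where it is needed, which is why the discrepancy went unnoticed. Your route is the more trustworthy one: keep the reduction to $\mathcal{P}^{*}(\mathbb{Z}_n)$ and the dominating-vertex lemma for all $n$ with $\phi(n)\geq 3$ (this correctly forces $n$ to be a prime power), and then record honestly that for prime powers and for $n\in\{3,4,6\}$ the resulting graphs are line graphs, rather than hoping for an obstruction that is not there.
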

Now, let us briefly summarize the content. In Section $2,$ we consider the power graphs of finite nilpotent groups and classify all those power graphs which are line graphs. In Section $3,$ we focus on the proper power graphs of finite nilpotent groups (except non abelian $2$-groups) and characterize all those proper power graphs which are line graphs. Moreover, in this section, we study that the proper power graph of generalized quaternion group is line graph. Also we derive the condition on the order of the dihedral group such that the proper power graph is line graph. 
\section{Proof of main theorem for the power graphs}
Here we give the proof of Theorem \ref{Thm:P(G) is line graph for nilpotent group}. To prove this theorem we need to go through three theorems. The first theorem describes the cyclic group case.
\begin{theorem}
Let $G$ be a finite cyclic group. Then there exists a graph $\Gamma$ such that $\mathcal{P}(G)=L(\Gamma)$ if and only if $G$ is a $p$-group.
\end{theorem}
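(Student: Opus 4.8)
The plan is to prove both implications through the forbidden-induced-subgraph criterion for line graphs in Lemma~\ref{line graph}. The reverse implication is immediate: if $G$ is a cyclic $p$-group then Lemma~\ref{thm:P(G) compltele iff G cylcic p group} gives that $\mathcal{P}(G)$ is the complete graph $K_{p^m}$, and every complete graph is a line graph because $K_{p^m}=L(\Gamma_{1,p^m})$. Thus all of the work lies in the forward implication, which I would establish by contraposition: assuming $G$ is cyclic but not a $p$-group, I will produce one of the nine graphs of Figure~\ref{fig:line grapph theory} as an induced subgraph of $\mathcal{P}(G)$.

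First I would record the adjacency rule in a cyclic group $G=\langle g\rangle$: because a cyclic group has a unique subgroup of each order, two elements $u,v$ are adjacent in $\mathcal{P}(G)$ if and only if $\mathrm{o}(u)\mid\mathrm{o}(v)$ or $\mathrm{o}(v)\mid\mathrm{o}(u)$. I would also note that the identity and every generator are dominating vertices (for a generator $g$ one has $y\in\langle g\rangle$ for all $y$), in agreement with Lemma~\ref{lemma: classification of dominating vertices of power graph}. Now suppose $|G|=n$ is divisible by two distinct primes $p$ and $q$; then $n\geq 6$, so $\phi(n)\geq 2$ and I may choose three distinct dominating vertices $d_1,d_2,d_3$, namely the identity together with two generators. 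By Lemma~\ref{m|G, nilpotent G has subgroup of ordder m}, or directly from cyclicity, there exist an element $x$ of order $p$ and an element $y$ of order $q$; these are distinct from one another and from each $d_i$, and $x\nsim y$ since $p\nmid q$ and $q\nmid p$.

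The key step is to observe that the subgraph of $\mathcal{P}(G)$ induced on $\{d_1,d_2,d_3,x,y\}$ is exactly $K_5$ with the single edge $xy$ deleted: the three dominating vertices are mutually adjacent and adjacent to both $x$ and $y$, and $xy$ is the only absent edge. This is precisely the graph $\Gamma_2$ of Figure~\ref{fig:line grapph theory}, so by Lemma~\ref{line graph} the power graph $\mathcal{P}(G)$ is not a line graph, which completes the contrapositive. I expect the main difficulty to be choosing a forbidden subgraph that works uniformly. The most tempting candidate, the claw $\Gamma_9=K_{1,3}$, need not be present: when $n=pq$ the only incomparable pair of element orders is $(p,q)$, so $\mathcal{P}(\mathbb{Z}_{pq})$ contains no three elements of pairwise incomparable orders and hence no induced claw. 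Passing to $K_5$ minus an edge avoids this pitfall, and it is exactly the bound $\phi(n)\geq 2$, which furnishes the three dominating vertices, that makes the construction possible. Finally, since only two prime divisors of $n$ are used, the same five-vertex configuration applies verbatim when $n$ has three or more prime divisors, so no additional case analysis is needed.
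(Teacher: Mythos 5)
Your proposal is correct and follows essentially the same route as the paper: both directions hinge on Lemma~\ref{thm:P(G) compltele iff G cylcic p group} for the $p$-group case and on exhibiting $\Gamma_2=K_5-e$ (via two elements of incomparable prime orders $p$ and $q$ together with three pairwise-adjacent common neighbours) when $|G|$ has two prime divisors. The only cosmetic difference is that the paper takes those three common neighbours to be the identity and two generators of the order-$pq$ subgroup, whereas you take the identity and two generators of $G$ itself; your side remark that the claw alone would not suffice for $\mathbb{Z}_{pq}$ is accurate and explains why $\Gamma_2$ is the right choice.
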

\begin{proof}
Let $G$ be a cyclic $p$-group with $|G|=p^{r}.$ Then by Lemma \ref{thm:P(G) compltele iff G cylcic p group}, $\mathcal{P}(G)$ is complete. As a result the star graph $\Gamma_{1, p^r}$ serves the purpose.

Conversely, let $|G|$ has at least two distinct prime factors say $p, q.$ Now $G$ is cyclic, so $G$ has a unique subgroup $H$ of order $pq.$ Again $\phi(pq)\geq 2$ implies that $H$ has at least two elements namely $v^{(pq)}_1, v^{(pq)}_2$ of order $pq.$ Again the cyclic subgroup $H= \langle v^{(pq)}_1\rangle=\langle v^{(pq)}_1\rangle$ has elements $v^{(p)}$ and $v^{(q)}$ (say) of order $p$ and $q$ respectively. Now we replace the vertices of the graph $\Gamma_2$ in Figure \ref{fig:line grapph theory} in the following way:\[ 6 \text{ by } v^{(pq)}_1, 10 \text{ by } e, \text{ the identity of } G, 8 \text{ by }v^{(pq)}_2, 7 \text{ by } v^{(p)} \text{ and } 9 \text{ by } v^{(q)}.\] Then the resulting graph is also isomorphic to the graph $\Gamma_2$ in the Figure \ref{fig:line grapph theory}. Therefore, $\mathcal{P}(G)$ contains an induced subgraph isomorphic to $\Gamma_2.$ Hence the theorem.  
\end{proof}
The next theorem handles noncyclic abelian groups. 
\begin{theorem}\label{Thm:P(G) is not line graph, non cylcic abelian grp}
Let $G$ be a non-cyclic abelian group. Then there does not exists any graph $\Gamma$ such that $\mathcal{P}(G)=L(\Gamma).$ 	
\end{theorem}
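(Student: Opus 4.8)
The plan is to exhibit one of the nine forbidden induced subgraphs of Lemma \ref{line graph} inside $\mathcal{P}(G)$, and the natural candidate is $\Gamma_9$, the claw $K_{1,3}$, since line graphs are claw-free. Note that the previous theorem used $\Gamma_2$ by exploiting a subgroup of order $pq$; here there may be only one prime available, so the claw is the more robust choice.

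First I would reduce to the existence of a copy of $\mathbb{Z}_p\times\mathbb{Z}_p$ inside $G$. Since $G$ is a non-cyclic finite abelian group, by Lemma \ref{Nilpotent group charecterization thm} it is the direct product of its Sylow subgroups, and at least one Sylow $p$-subgroup $P$ must be non-cyclic (otherwise $G$ would be the direct product of cyclic groups of pairwise coprime orders, hence cyclic). By the structure theorem for finite abelian $p$-groups, a non-cyclic $P$ splits into at least two cyclic factors, each contributing an element of order $p$; hence $P$, and therefore $G$, contains a subgroup $H\cong\mathbb{Z}_p\times\mathbb{Z}_p$.

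Next I would use the elementary fact that $H\cong\mathbb{Z}_p\times\mathbb{Z}_p$ has exactly $p+1\geq 3$ distinct subgroups of order $p$. Choosing three of them, say $H_1,H_2,H_3$, I pick a non-identity element $a_i\in H_i$. Because every non-identity element of $H$ has order $p$, the powers of $a_i$ are precisely the elements of $H_i=\langle a_i\rangle$, and distinct subgroups of prime order meet only in $e$. Consequently $a_1,a_2,a_3$ are pairwise non-adjacent in $\mathcal{P}(G)$, while each $a_i$ is adjacent to $e$ via $a_i^{\,p}=e$. Thus the four vertices $\{e,a_1,a_2,a_3\}$ induce the star $\Gamma_{1,3}(e,a_1,a_2,a_3)$, which is exactly the forbidden graph $\Gamma_9$, and Lemma \ref{line graph} then gives the conclusion.

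The argument is uniform across all non-cyclic abelian $G$, so there is no serious obstacle; the only point requiring care is the verification that the chosen $a_1,a_2,a_3$ are mutually non-adjacent, which rests entirely on the trivial intersection of the prime-order subgroups $H_1,H_2,H_3$ and the fact that a power of an order-$p$ element stays within its own cyclic subgroup.
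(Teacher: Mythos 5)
Your proposal is correct and follows essentially the same route as the paper: both arguments produce three elements of order $p$ lying in three distinct cyclic subgroups of order $p$ (guaranteed because some Sylow subgroup is non-cyclic, so there are at least $p+1\geq 3$ such subgroups) and observe that together with the identity they induce the claw $\Gamma_{1,3}=\Gamma_9$, which is forbidden by Lemma \ref{line graph}. Your explicit reduction to a copy of $\mathbb{Z}_p\times\mathbb{Z}_p$ is just a cleaner justification of the paper's claim that the number $s$ of order-$p$ subgroups satisfies $s\geq 3$.
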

\begin{proof}
We prove this theorem by showing that the power graph $\mathcal{P}(G)$ has no induced subgraph isomorphic to the graph $\Gamma_{1, 3}.$	
Now it is given that $G$ is non cyclic abelian. So, \[G\cong\mathbb{Z}_{p^{t_{11}}_1}\times \cdots\times\mathbb{Z}_{p^{t_{1k_1}}_1}\times \mathbb{Z}_{p^{t_{21}}_2}\times\cdots\times\mathbb{Z}_{p^{t_{2k_2}}_2}\times\cdots\times
\mathbb{Z}_{p^{t_{r1}}_r}\times\cdots\times\mathbb{Z}_{p^{t_{rk_r}}_r},\] where $k_i\geq1, 1\leq t_{i1}\leq t_{i2}\leq\cdots\leq t_{ik_i}, $ for all $i\in [r]$ and there exists at least one $k_i$ such that $k_i\geq 2$ (if each $k_i=1,$ then $G$ would be cyclic). Without loss of generality, we assume that $k_1\geq 2.$ Let $H_1=\langle v^{(p_1)}_1\rangle,\cdots, H_s=\langle v^{(p_1)}_s\rangle $ be the complete list of distinct cyclic subgroups of order $p_1$ of $G.$ Now $k_1\geq 2$ implies that $s\geq 3.$ Therefore we can choose three distinct vertices  $v_{i_1}^{(p_1)}, v_{i_2}^{(p_1)}, v_{i_3}^{(p_1)}$ (order of each vertex is $p_1$) from three distinct cyclic subgroups $H_{i_1}, H_{i_2}, H_{i_3}$ respectively. Now we show that $e, v_{i_1}^{(p_1)}, v_{i_2}^{(p_1)}, v_{i_3}^{(p_1)}$ form an induced subgraph isomorphic to the graph $\Gamma_{1, 3}(e, v_{i_1}^{(p_1)}, v_{i_2}^{(p_1)}, v_{i_3}^{(p_1)}).$ From the definition of the power graph $e$ is edge connected with $v_{i_1}^{(p_1)}, v_{i_2}^{(p_1)} \text{ and }  v_{i_3}^{(p_1)}.$ Now $v_{i_1}^{(p_1)}, v_{i_2}^{(p_1)} \text{ and }  v_{i_3}^{(p_1)}$ are the generators of three distinct cylcic subroups $H_{i_1}, H_{i_1}$ and $H_{i_1}$ respectively. Also order of each cyclic subgroup is $p_1.$ As a result, $v_{i_j}^{(p_1)}\nsim v_{i_k}^{(p_1)},$ for each $i, j\in\{1, 2, 3\} \text{ and } i\neq j.$ Hence the theorem.  
\end{proof}
Here we focus on the finite group $G$ such that $G$ is non abelian nilpotent. In this case, we have the following:
\begin{theorem}\label{thm: P(G) line graph, G non abelian nilpotent}
Let $G$ be a non abelian nilpotent group. Then there does not exists any graph $\Gamma$  such that $\mathcal{P}(G)=L(\Gamma).$   	
\end{theorem}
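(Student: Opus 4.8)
The plan is to show that $\mathcal{P}(G)$ contains an induced star $\Gamma_{1,3}$ (the graph $\Gamma_9$ of Figure \ref{fig:line grapph theory}), so that Lemma \ref{line graph} immediately forces $\mathcal{P}(G)$ not to be a line graph --- the same strategy already used in Theorem \ref{Thm:P(G) is not line graph, non cylcic abelian grp}. Since $e = g^{o(g)}$ for every $g \in G$, the identity is a dominating vertex of $\mathcal{P}(G)$; hence it is enough to produce three non-identity elements $a,b,c$ that are pairwise non-adjacent, i.e.\ such that in each pair neither element is a power of the other. Then $\{e,a,b,c\}$ induces $\Gamma_{1,3}(e,a,b,c)$ and we are done.

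First I would reduce to a single Sylow subgroup. By Lemma \ref{Nilpotent group charecterization thm}, $G$ is the direct product of its Sylow subgroups, and since a direct product of abelian groups is abelian, the non-abelianness of $G$ forces some Sylow $p$-subgroup $P$ to be non-abelian. The key observation is that comparability of two elements of $P$ is intrinsic to $P$: if $a,b \in P$ and $a = b^m$, then $a \in \langle b\rangle \subseteq P$, so the adjacency of $a,b$ in $\mathcal{P}(G)$ coincides with their adjacency in $\mathcal{P}(P)$. Thus it suffices to find three pairwise-incomparable non-identity elements inside the non-abelian $p$-group $P$.

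Next I would locate these elements via subgroups of order $p$. Generators of two \emph{distinct} subgroups of order $p$ are automatically incomparable, since $a = b^m$ with $o(a)=o(b)=p$ would give $\langle a\rangle = \langle b\rangle$. For odd $p$, Lemma \ref{p group unique subgrp of order p, g is cyclic} shows a non-cyclic (hence non-abelian) $p$-group cannot have a unique subgroup of order $p$; as their number is $\equiv 1 \pmod p$, there are at least $p+1 \ge 3$ of them, and picking one generator from each of three yields $a,b,c$. For $p=2$ the number of involutions is odd, so either there are at least three involutions --- again giving three pairwise-incomparable elements of order $2$ --- or $P$ has a unique involution.

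The remaining, and main, obstacle is the case of a non-abelian $2$-group $P$ with a unique involution. Here I would invoke the classical fact that such a group is generalized quaternion, $P \cong Q_{2^n}$, and then exhibit the three elements explicitly: in the presentation above, $x$, $y$, $xy$ lie in three distinct cyclic subgroups $\langle x\rangle$, $\langle y\rangle$, $\langle xy\rangle$ (of orders $2^{n-1}, 4, 4$), no one of which contains a generator of another, so they are pairwise incomparable. In every case $\{e,a,b,c\}$ induces $\Gamma_{1,3}$, completing the argument. The only external input beyond the already-quoted lemmas is the classification of $2$-groups possessing a single involution; everything else reduces to the elementary counting of subgroups of order $p$ (respectively involutions).
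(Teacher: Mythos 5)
Your proposal is correct, and it reaches the same destination as the paper (an induced $\Gamma_{1,3}$ centered at the identity, with the generalized quaternion groups as the delicate case), but it gets there by a genuinely cleaner route in two places. First, your reduction to a single non-abelian Sylow subgroup $P$ --- justified by the observation that the induced subgraph of $\mathcal{P}(G)$ on $P$ is exactly $\mathcal{P}(P)$ --- collapses the paper's Cases 1 and 2 (three or more prime divisors, respectively two prime divisors with a non-cyclic factor, each handled by an ad hoc choice of witnesses using Lemmas \ref{m|G, nilpotent G has subgroup of ordder m} and \ref{thm:P(G) compltele iff G cylcic p group}) into a single step; the price is that you need $P$ non-abelian rather than merely non-cyclic, which your direct-product observation supplies. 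Second, to rule out the possibility of exactly two subgroups of order $p$, the paper argues constructively: it picks a central subgroup of order $p$ and a second one, and manufactures a third as $\langle v_{i_1}^{(p)}v_{i_2}^{(p)}\rangle$ (checking the three are distinct), which works uniformly for all $p$. You instead invoke the congruence that the number of subgroups of order $p$ in a $p$-group is $\equiv 1 \pmod p$ for odd $p$, and the parity of the number of involutions for $p=2$; this is shorter but imports a classical counting fact (Frobenius) that is not among the paper's quoted lemmas and should be cited or proved. Two cosmetic remarks: your parenthetical ``non-cyclic (hence non-abelian)'' has the implication reversed --- you mean non-abelian, hence non-cyclic, hence by Lemma \ref{p group unique subgrp of order p, g is cyclic} not possessing a unique subgroup of order $p$ --- and in the $Q_{2^n}$ case the pairwise incomparability of $x$, $y$, $xy$ deserves a one-line verification (e.g.\ $\langle y\rangle$ and $\langle xy\rangle$ are distinct subgroups of order $4$ meeting $\langle x\rangle$ only in $\langle x^{2^{n-2}}\rangle$), though it is true and is essentially the same triple the paper uses.
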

\begin{proof}
It is given that $G$ is nilpotent. So, by Lemma \ref{Nilpotent group charecterization thm}, $G\cong P_1\times \cdots\times P_r,$ where each $P_i$ is a Sylow subgroup of order $p_i^{\alpha_i} \text{ and }\alpha_i\in \mathbb{N}.$ Now divide the prove of this theorem in several cases.

Case 1: First let $r\geq 3.$ In this case $|G|$ has at least three distinct prime divisors $p_1, p_2$ and $ p_3$ (say). Then by Lemma \ref{m|G, nilpotent G has subgroup of ordder m}, $G$ has three elements $v^{(p_1)}, v^{(p_2)}, v^{(p_3)}$ such that $\text{o}(v^{(p_1)})=p_1, \text{o}(v^{(p_2)})=p_2$ and $\text{o}(v^{(p_3)})=p_3.$	Now from the definition of the power graph, it is clear that the vertices $e, v^{(p_1)}, v^{(p_2)}, v^{(p_3)}$ form an induced subgraph $\Gamma_{1, 3}(e, v^{(p_1)}, v^{(p_2)}, v^{(p_3)})$ in $\mathcal{P}(G).$ 
	
Case 2: Let $r=2,$ then $G\cong P_1\times P_2.$ Now it is given that $G$ is non abelian. Therefore, either $P_1$ or $P_2$ is non-cyclic. Without loss of generality we assume that $P_1$ is non cyclic. Now $P_1$ is non cyclic group of order $p_1^{\alpha},$ for some $\alpha\geq 2, (\alpha=1 \text{ implies that } P_1 \text{ is cyclic}).$ As a result, $P_1$ has at least two elements $v, v'$ such that $v\nsim v'$ in $\mathcal{P}(G).$ Otherwise by Lemma \ref{thm:P(G) compltele iff G cylcic p group} $P_1$ would be a cyclic group. Clearly, $\text{o}(v)$ and $\text{o}(v')$ are power of the prime $p_1.$ Let $v''$ be an element of order $p_2$ in $P_2.$ Now from the definition of the power graph  $v''$ is edge connected to neither $v$ nor $v'.$ Also $e$ is edge connected to the vertices $v, v', v''.$ Hence $\Gamma_{1, 3}(e, v, v', v'')$ is an induced subgraph of $\mathcal{P}(G).$
	
Case 3: Let $r=1.$ In this case, $G$ is a non abelian $p$-group. To prove this case we first prove a claim. 

Claim: Let $H_1=\langle v_1^{(p)}\rangle, \cdots, H_{\ell}=\langle v_{\ell}^{(p)}\rangle$ be the collection of all distinct cyclic subgroups of order $p$ of $G.$ We prove that either $\ell=1$ or $\ell\geq 3.$ So, we have to prove that $\ell$ can not be $2.$ So suppose $\ell=2.$ Since $G$ is $p$-group, the center of $G$ is $Z(G)$ is non trivial. Therefore, $Z(G)$ has a subgroup $H_{i_1}=\langle v_{i_1}^{(p)}\rangle$ (say) of order $p.$ Let $H_{i_2}=\langle v_{i_2}^{(p)}\rangle$ be another cyclic subgroup of order $p,$ (as $\ell\geq 2, \text{ we can choose more than one subgroup of order } p).$ Then $v_{i_1}^{(p)}v_{i_2}^{(p)}=v_{i_2}^{(p)}v_{i_1}^{(p)}$ and $\text{o}(v_{i_1}^{(p)}v_{i_2}^{(p)})=p.$ Take $H_{i_3}=\langle v_{i_1}^{(p)}v_{i_2}^{(p)}\rangle$ and it is easy to see that $H_1\neq H_3$ and $H_2\neq H_3.$ Hence $\ell>2.$
	
Now using the claim we finish the proof of this case. First suppose that $\ell\geq 3.$ Then we can choose three $p$-ordered elements $v_{i_1}^{(p)}, v_{i_2}^{(p)} \text{ and } v_{i_3}^{(p)}$ from $H_{i_1}, H_{i_2}$ and $H_{i_3}$ respectively. Clearly they are not adjacent to each other. Therefore, $\mathcal{P}(G)$ has an induced subgraph $\Gamma_{1, 3}(e, v_{i_1}^{(p)}, v_{i_2}^{(p)}, v_{i_3}^{(p)}).$ Now let $\ell=1.$ i.e., $G$ has exactly one cyclic subgroup of order $p.$ Now by Lemma \ref{p group unique subgrp of order p, g is cyclic}, $p$ can not be an odd prime. Therefore $p=2.$ Then, $G\cong Q_{2^n},$ where $ Q_{2^n}$ is the generalized quaternion group of order $2^n, n\geq 3.$ Now $Q_{2^n}$ has exactly one subgroup $H=\langle v^{(2^{n-1})}\rangle$ of order $2^{n-1}.$ Also, the number of $4$-ordered elements in $Q_{2^n}\setminus H$ is $2^{n-1}.$ Now, the number of distinct $4$-ordered cyclic subgroups in $Q_{2^n}\setminus H$ is $2^{n-2}.$ Let these are $H_1=\langle v_1^{(4)}\rangle, \cdots, H_{2^{n-2}}=\langle v_{2^{n-2}}^{(4)}\rangle.$ Clearly $2^{n-2}\geq 2.$ So, we can choose two vertices $v_{i_1}^{(4)}$ and $v_{i_2}^{(4)}$
from $H_{i_1}$ and $H_{i_2}$ respectively, where $i_1\neq i_2$ and $\text{o}(v_{i_1}^{(4)})=4=\text{o}(v_{i_2}^{(4)}).$ Now $v_{i_1}^{(4)}, v_{i_2}^{(4)}\in Q_{2^n}\setminus H$ implies that  $v^{(2^{n-1})}$ neither edge connected to $v_{i_1}^{(4)}$ nor to $ v_{i_2}^{(4)}.$ Moreover, $v_{i_1}^{(4)}\nsim v_{i_2}^{(4)}$ in $\mathcal{P}(G).$ Therefore, $\mathcal{P}(G)$ has an induced subgraph $\Gamma_{1, 3}(e, v^{(2^{n-1})}, v_{i_1}^{(4)}, v_{i_2}^{(4)}).$ This completes the proof.
\end{proof}
\section{Proof of the main theorem for proper power graphs}
In this portion we give the attention for the proof of Theorem \ref{Thm:P**(G) is line graph for nilpotent group}. Here we first characterize all cyclic groups, for which $\mathcal{P}^{**}(G)=L(\Gamma).$ Then we do same thing for the cases non cyclic abelian and non abelian nilpotent groups.  
\begin{theorem}\label{classify: G cyclic line graph, P^{**}(G)}
Let $G$ be a finite cyclic group. Then there exists a graph $\Gamma$ such that $\mathcal{P}^{**}(G)=L(\Gamma)$  if and only if $G$ is one of the following:
\begin{enumerate}
\item[(a)] 
$G\cong\mathbb{Z}_{p^t}$
\item[(b)]
$G\cong \mathbb{Z}_{pq},$ 	
\end{enumerate}
where $p, q$ are distinct primes and $t\geq 1.$	
\end{theorem}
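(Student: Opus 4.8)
The plan is to first reduce $\mathcal{P}^{**}(\mathbb{Z}_n)$ to a transparent combinatorial object and then handle the two implications separately. For $G=\mathbb{Z}_n$ I would begin by pinning down $\Dom(\mathcal{P}(G))$. If $n=p^t$, then by Lemma \ref{thm:P(G) compltele iff G cylcic p group} $\mathcal{P}(G)$ is complete, every vertex is dominating, and $\mathcal{P}^{**}(G)$ is the null graph, which is vacuously a line graph. If $n$ is not a prime power, then Lemma \ref{lemma: classification of dominating vertices of power graph} shows the only dominating vertices are the identity and the generators, so the vertices of $\mathcal{P}^{**}(G)$ are exactly the elements whose order $d$ satisfies $d\mid n$ and $1<d<n$. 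Since a cyclic group has a unique subgroup of each order, two such vertices $u,v$ are adjacent if and only if $\mathrm{o}(u)\mid\mathrm{o}(v)$ or $\mathrm{o}(v)\mid\mathrm{o}(u)$. Thus $\mathcal{P}^{**}(\mathbb{Z}_n)$ is a blow-up of the divisibility graph on the proper nontrivial divisors of $n$: each divisor $d$ is replaced by a clique of $\phi(d)$ vertices, and two such cliques are completely joined precisely when one of the corresponding divisors divides the other. This picture is what I would use throughout.

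For sufficiency, the case $n=p^t$ is the null graph above, and for $n=pq$ the relevant divisors are $p$ and $q$, which are incomparable, so $\mathcal{P}^{**}(\mathbb{Z}_{pq})\cong K_{p-1}\oplus K_{q-1}$. A disjoint union of complete graphs is the line graph of a disjoint union of stars (recall $K_m=L(\Gamma_{1,m})$), so both families are line graphs and the ``if'' direction follows at once.

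For necessity I would assume $n$ is neither a prime power nor of the form $pq$ and exhibit one of the nine Beineke obstructions of Lemma \ref{line graph}, splitting into three exhaustive cases. \emph{(i)} If $n$ has at least three distinct prime divisors $p,q,r$ and $n>pqr$ (equivalently, $n$ is not squarefree with exactly three primes), pick a vertex $g$ of order $pqr$ and vertices $a,b,c$ of orders $p,q,r$; all have order strictly below $n$, and $g$ is joined to each of $a,b,c$ while $a,b,c$ are pairwise non-adjacent, giving an induced claw $\Gamma_{1,3}$, which is $\Gamma_9$ of Figure \ref{fig:line grapph theory}. \emph{(ii)} If $n=pqr$ is squarefree with exactly three primes, the class graph is a blown-up $6$-cycle and no induced claw survives; here I would take two vertices $g_1,g_2$ of order $qr$, one vertex each of orders $q$ and $r$, and one vertex each of orders $pq$ and $pr$, and verify by a short divisibility check that these six vertices induce $\Gamma_5$ (a $K_4$ minus an edge, with a pendant attached at each endpoint of the missing edge). \emph{(iii)} If $n=p^aq^b$ with $(a,b)\neq(1,1)$, say $p^2\mid n$, I would produce an induced copy of $\Gamma_4$ (a $K_4$ together with an extra vertex joined to the two endpoints of one of its edges, carrying a pendant): when the non-squared prime $q$ is odd, take one vertex each of orders $p,p^2$ and two vertices each of orders $pq$ and $q$; when $q=2$ (so the squared prime $p$ is odd) take instead two vertices each of orders $p,p^2$ and one vertex each of orders $pq$ and $q$. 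In each variant the orders used are $p,p^2,pq,q$, all below $n$, and the incidence pattern forced by divisibility is exactly $\Gamma_4$.

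The sufficiency part and the claw case \emph{(i)} are routine. The main obstacle is the two claw-free cases \emph{(ii)} and \emph{(iii)}: since $\mathcal{P}^{**}(\mathbb{Z}_n)$ contains no induced $\Gamma_{1,3}$ there, one must identify the correct non-star Beineke graph and, above all, guarantee that the required vertices actually exist, i.e.\ that the relevant divisor-classes are large enough. This is precisely where the Euler-$\phi$ counts matter and where the split in case \emph{(iii)} between an odd non-squared prime and the exceptional value $q=2$ becomes necessary; verifying the existence of two elements of a given order in each subcase is the step that needs the most care.
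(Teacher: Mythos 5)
Your argument is correct, but it is organized quite differently from the paper's. The paper proves the theorem via two ``if and only if'' propositions (Propositions \ref{prop:P^{**}(G), does not contain star graph Gamma(1, 3), G cylic} and \ref{prop: P^{**}(G), does not contain Gamma_2, G cyclic}), characterizing exactly which cyclic groups yield a claw-free $\mathcal{P}^{**}(G)$ and which yield a $\Gamma_2$-free one; intersecting the two lists leaves $\mathbb{Z}_{p^t},\mathbb{Z}_{pq},\mathbb{Z}_{12},\mathbb{Z}_{18}$, and the last two are then eliminated by exhibiting induced copies of $\Gamma_4$ and $\Gamma_3$ respectively. You instead set up the blow-up-of-the-divisor-poset description of $\mathcal{P}^{**}(\mathbb{Z}_n)$ once and then run a direct, exhaustive case analysis on $n$, producing a single Beineke obstruction in each case: $\Gamma_9$ when $n$ has three primes and exceeds $pqr$, $\Gamma_5$ when $n=pqr$, and $\Gamma_4$ (in two $\phi$-dependent variants) when $n=p^aq^b$ with $(a,b)\neq(1,1)$. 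I checked your divisibility patterns against Figure \ref{fig:line grapph theory}: the six vertices of orders $qr,qr,q,r,pq,pr$ do induce $\Gamma_5$, and both variants of the $p,p^2,pq,q$ configuration induce $\Gamma_4$, with the odd/even split correctly guaranteeing two elements in the classes that need them. Your route buys a real economy: you never have to prove the negative halves of the paper's two propositions (that $\mathbb{Z}_{pqr}$, $\mathbb{Z}_{p^tq}$, $\mathbb{Z}_{p^2q^2}$ are claw-free and that $\mathbb{Z}_{12}$, $\mathbb{Z}_{18}$ are $\Gamma_2$-free), which is the most laborious part of the paper's proof; the paper's route, in exchange, yields the standalone claw-free classification, which it reuses elsewhere. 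One immaterial discrepancy: you call $\mathcal{P}^{**}(\mathbb{Z}_{p^t})$ the null graph (the literal reading of Definition \ref{defn: proper enhacd pwr graph}, since every vertex of a complete power graph is dominating), whereas the paper treats it as the complete graph on the non-generators; both are line graphs, so the conclusion is unaffected either way.
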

To prove this theorem first we need to prove the following propositions: 
\begin{proposition}\label{prop:P^{**}(G), does not contain star graph Gamma(1, 3), G cylic}
Let $G$ be a finite cyclic group. Then $\mathcal{P}^{**}(G)$ does not contain the star graph $\Gamma_{1, 3}$ as an induced subgraph if and only if $G$ is one of the following:
\begin{enumerate}
\item[(a)]
$G\cong \mathbb{Z}_{p^t}$
\item[(b)]
$G\cong \mathbb{Z}_{pqr}$	
\item[(c)]
$G\cong \mathbb{Z}_{p^2q^2}$
%\item
%$G\cong \mathbb{Z}_{pq}$
\item[(d)]
$G\cong \mathbb{Z}_{p^tq},$ 
\end{enumerate}
where $p, q, r$ are primes such that $p\neq q\neq r \text{ and } t\geq 1.$	
\end{proposition}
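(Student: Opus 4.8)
The plan is to translate the entire question into a statement about the divisibility poset of the divisors of $n$, where $G\cong\mathbb{Z}_n$. First I would record two facts. By the classification of dominating vertices (Lemma~\ref{lemma: classification of dominating vertices of power graph}), when $n$ is not a prime power the only dominating vertices of $\mathcal{P}(\mathbb{Z}_n)$ are the identity and the $\phi(n)$ generators; hence the vertex set of $\mathcal{P}^{**}(\mathbb{Z}_n)$ is exactly $\{x\in\mathbb{Z}_n : 1<o(x)<n\}$, the elements whose order is a proper nontrivial divisor of $n$. Second, in a cyclic group $u$ is a power of $v$ if and only if $\langle u\rangle\subseteq\langle v\rangle$, equivalently $o(u)\mid o(v)$, because $\mathbb{Z}_n$ has a unique subgroup of each order dividing $n$. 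Consequently two vertices $v^{(a)},v^{(b)}$ of $\mathcal{P}^{**}(\mathbb{Z}_n)$ are adjacent if and only if $a\mid b$ or $b\mid a$; in particular any two elements of the same order are adjacent, and adjacency depends only on the orders.

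With this dictionary in hand, I would show that $\mathcal{P}^{**}(\mathbb{Z}_n)$ contains an induced $\Gamma_{1,3}$ if and only if there are four distinct proper nontrivial divisors $d_0,d_1,d_2,d_3$ of $n$ such that $d_1,d_2,d_3$ are pairwise incomparable under divisibility (an antichain of size $3$) while $d_0$ is comparable to each of them. The forward direction is immediate: in a claw the three leaves must carry pairwise incomparable orders, since equal or comparable orders would force an edge, and the centre's order must be comparable to each leaf order yet distinct from all of them (if it equalled a leaf order, that leaf would be joined to the other two). For the reverse direction, one picks elements $v^{(d_0)},v^{(d_1)},v^{(d_2)},v^{(d_3)}$, which exist since each divisor is realised as an order and distinct orders give distinct elements; by the adjacency rule they induce precisely $\Gamma_{1,3}(v^{(d_0)}, v^{(d_1)}, v^{(d_2)}, v^{(d_3)})$. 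Thus the task becomes purely combinatorial: for which $n$ does the poset of proper nontrivial divisors of $n$ admit a $3$-antichain together with a further divisor comparable to all of its members?

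To prove the four listed families are claw-free I would analyse this poset directly. For $G\cong\mathbb{Z}_{p^t}$ the graph $\mathcal{P}^{**}$ is empty, since the power graph of a cyclic $p$-group is complete by Lemma~\ref{thm:P(G) compltele iff G cylcic p group}, so there is nothing to check. For $G\cong\mathbb{Z}_{p^tq}$ the proper nontrivial divisors split into the two chains $p<p^2<\cdots<p^t$ and $q<pq<\cdots<p^{t-1}q$; since the poset is covered by two chains, Dilworth's theorem bounds the maximum antichain by $2$, so no $3$-antichain, and hence no claw, can exist. For $G\cong\mathbb{Z}_{pqr}$ and $G\cong\mathbb{Z}_{p^2q^2}$ I would enumerate the very few $3$-antichains: in $\mathbb{Z}_{pqr}$ they are only $\{p,q,r\}$ and $\{pq,pr,qr\}$, and in $\mathbb{Z}_{p^2q^2}$ only the anti-diagonal $\{p^2,pq,q^2\}$. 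In each case the natural candidates for a common comparable divisor are the least common multiple and the greatest common divisor of the antichain, which equal $n$ and $1$ respectively and are therefore not vertices; a short check disposes of the remaining mixed candidates. Hence none of these groups contains a claw.

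For the converse I would exhibit an explicit claw for every remaining $n$, and the key point, which is the main obstacle to handle cleanly, is that a $3$-antichain by itself is never enough: one also needs a proper nontrivial common multiple (or divisor) to play the role of the centre, and it is exactly the availability of this bound that separates the claw-free groups from the rest. Concretely, if $n$ has at least four distinct prime divisors then $\{p_1,p_2,p_3\}$ has the proper nontrivial common multiple $p_1p_2p_3$, giving a claw; if $n$ has exactly three prime divisors but is not squarefree, then $pqr$ is a proper divisor of $n$ and $\{p,q,r\}$ with centre $pqr$ is a claw; and if $n=p^aq^b$ with $a,b\geq 2$ and $(a,b)\neq(2,2)$, then $p^2q^2$ is a proper divisor of $n$ and the antichain $\{p^2,pq,q^2\}$ with centre $p^2q^2$ is a claw. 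These three families are precisely the $n$ not covered by (a)--(d), so the characterization closes. The delicate bookkeeping throughout is this centre-existence check: in $\mathbb{Z}_{p^2q^2}$ the natural centre $p^2q^2$ coincides with $n$ and is deleted, which is exactly why that group is claw-free, whereas enlarging either exponent frees $p^2q^2$ as a genuine vertex and immediately produces the forbidden $\Gamma_{1,3}$.
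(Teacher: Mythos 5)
Your proof is correct, and it takes a noticeably more systematic route than the paper's. The paper argues directly on group elements: it splits into cases by the number of prime divisors, exhibits explicit claws such as $\Gamma_{1,3}(v^{(pqr)},v^{(p)},v^{(q)},v^{(r)})$ for four or more primes (or three primes with a square) and $\Gamma_{1,3}(v^{(p)},v^{(p^3)},v^{(p^2q)},v^{(pq^2)})$ for $p^{t_1}q^{t_2}$ with $t_1\geq 3$, $t_2\geq 2$, and for the claw-free groups $\mathbb{Z}_{pqr}$, $\mathbb{Z}_{p^tq}$, $\mathbb{Z}_{p^2q^2}$ it runs a case analysis on the possible order of the centre, completing the other subcases with a ``similarly'' remark. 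You instead extract the underlying principle as a clean reduction: since $\mathbb{Z}_n$ has a unique subgroup of each order, adjacency in $\mathcal{P}^{**}(\mathbb{Z}_n)$ depends only on orders via divisibility, and (after disposing of prime powers) an induced $\Gamma_{1,3}$ exists if and only if the poset of proper nontrivial divisors of $n$ contains a $3$-antichain together with a further divisor comparable to all three members. This buys you two things the paper does not make explicit: the claw-freeness of $\mathbb{Z}_{p^tq}$ follows in one line from a two-chain cover of its divisor poset (so no $3$-antichain exists at all), and for $\mathbb{Z}_{pqr}$ and $\mathbb{Z}_{p^2q^2}$ the verification reduces to enumerating the very few $3$-antichains and checking that the only candidate centres are $1$ or $n$, which are not vertices — a complete argument where the paper waves at symmetry. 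Your explicit claws for the converse differ in detail from the paper's (e.g.\ centre $p^2q^2$ with leaves $p^2,pq,q^2$ when $a,b\geq 2$ and $(a,b)\neq(2,2)$, versus the paper's centre of order $p$) but are equally valid, and your enumeration of the residual cases ($\geq 4$ primes; $3$ primes non-squarefree; $p^aq^b$ with $a,b\geq2$, $(a,b)\neq(2,2)$) correctly exhausts the complement of families (a)--(d).
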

\begin{proof}
We divide the proof of this proposition in several cases.

Case 1: Let $|G|$ has at least four distinct prime divisors say $p, q, r, p'.$ Since $G$ is cyclic, then $G$ has an element $v^{(pqr)}$ such that $\text{o}(v^{(pqr)})=pqr.$ Also the cyclic subgroup $\langle v^{(pqr)}\rangle$ has elements $v^{(p)}, v^{(q)}, v^{(r)}$ of order  $p, q, r$ respectively. Clearly, the vertices $v^{(pqr)}, v^{(p)}, v^{(q)} \text{ and } v^{(r)}$ are not the generators of the group $G.$ Therefore, by Lemma \ref{lemma: classification of dominating vertices of power graph} $v^{(pqr)}, v^{(p)}, v^{(q)} \text{ and } v^{(r)}\in V(\mathcal{P}^{**}(G)).$ Now $v^{(p)}, v^{(q)} \text{ and } v^{(r)}\in \langle v^{(pqr)}\rangle$ implies that $ v^{(pqr)}$ is edge connected to the vertices $v^{(p)}, v^{(q)} \text{ and } v^{(r)}.$ Again, $p, q, r$ are three distinct primes, therefore from the definition of power graph the vertices $v^{(p)}, v^{(q)} \text{ and } v^{(r)}$ are not adjacent to each other. As a result, $\mathcal{P}^{**}(G)$ has an induced subgraph $\Gamma_{1, 3}(v^{(pqr)}, v^{(p)}, v^{(q)}, v^{(r)}).$ 

Case 2: Let $|G|=p^{\alpha}q^{\beta}p_3^{\gamma},$ where at least one of $\alpha, \beta, \gamma\geq 2.$ Then applying the same argument as in the Case 1, we can conclude that $\mathcal{P}^{**}(G)$ has an induced subgraph $\Gamma_{1, 3}(v^{(pqr)}, v^{(p)}, v^{(q)}, v^{(r)}).$ 
 
Now we show that if $G\cong\mathbb{Z}_{pqr},$ then $\mathcal{P}^{**}(\mathbb{Z}_{pqr})$ does not contain any induced subgraph isomorphic to $\Gamma_{1, 3}.$ Note that by Lemma \ref{lemma: classification of dominating vertices of power graph}, the identity and all the generators of the group $\mathbb{Z}_{pqr}$ are the complete list of domminating vertices of the graph $\mathcal{P}(\mathbb{Z}_{pqr}).$ Therefore, the identity and all the generators of the group $\mathbb{Z}_{pqr}$ are not in the vertex set $V(\mathcal{P}^{**}(\mathbb{Z}_{pqr})).$ If possible $\mathcal{P}^{**}(\mathbb{Z}_{pqr})$ contains an induced subgraph $\Gamma_{1, 3}(v, v_1, v_2, v_3),$ for some vertices $v, v_1, v_2, v_3\in V(\mathcal{P}^{**}(\mathbb{Z}_{pqr})).$ So, $v\sim v_i $ for each $i\in \{1, 2, 3\}$  and $v_i\nsim v_j$ for each pair $i, j(i\neq j)\in \{1, 2, 3\}.$ Now the possible order of the vertices of the graph $\mathcal{P}^{**}(\mathbb{Z}_{pqr})$ are either $p$ or $q$ or $r$ or $pq$ or $pr$ or $qr.$ Now order of $v$ could either $p$ or $q$ or $r$ or $pq$ or $qr$ or $pr.$ First suppose that $\text{o}(v)=pq.$ Then it is cleared (from the definition of power graph) that $v$ is edge connected only with each $pq$-ordered, $p$-ordered and $q$-ordered vertices in $\mathcal{P}^{**}(\mathbb{Z}_{pqr}).$ Let $V(t)$ be the collection of all vertices of order $t.$ So we have to choose $v_1, v_2, v_3$ from $V(p)\cup V(q)\cup V({pq})$ such that no two of them are adjacent. Note that we can not choose more than one vertex from any one of the set $V(p), V(q), V({pq}).$ In fact, all the vertices in any one of the set $V(p), V(q), V({pq})$ form a clique. So, without loss of generality we assume that $v_1\in V(p), v_2\in V(q)$ and $v_3\in V(pq).$ But it is cleared that $v_3$ is edge connected to both of the vertices $v_1$ and $v_2.$ This violates the condition $v_i\nsim v_j$ for each pair $i, j\in \{1, 2, 3\}.$  Similarly, we can show that the graph $\mathcal{P}^{**}(\mathbb{Z}_{pqr})$ don't have any induced subhgraph isomorphic to $\Gamma_{1, 3}(v, v_1, v_2, v_3)$ for the other possible choices of the $\text{o}(v), \text{o}(v_1), \text{o}(v_2), \text{o}(v_3).$
	
Case 3: Let $|G|$ has two distinct prime divisors $p$ and $q.$ Suppose $G\cong\mathbb{Z}_{p^{t_1}}\times \mathbb{Z}_{q^{t_2}}.$  First we consider that $t_1\geq 3 \text{ and }t_2\geq 2,$ (if $t_1\geq 2 \text{ and }t_2\geq 3,$ then also similar result holds). In this case, $\mathcal{P}^{**}(\mathbb{Z}_{p^{t_1}}\times \mathbb{Z}_{q^{t_2}})$ have an induced subgraph $\Gamma_{1, 3}(v^{(p)}, v^{(p^3)}, v^{(p^2q)}, v^{(pq^2)}),$ where $\text{o}(v^{(p)})=p, \text{o}( v^{(p^3)})=p^3, \text{o}(v^{(p^2q)})=p^2q, \text{o}(v^{(pq^2)})=pq^2.$ 
	
If $G\cong\mathbb{Z}_{p^{t}q},$ where $t\geq 1.$ If we proceed exactly same way as in the proof of the case $G\cong\mathbb{Z}_{pqr},$ we can conclude that $\mathcal{P}^{**}(\mathbb{Z}_{p^{t}q})$ does not have any induced subgraph isomorphic to star graph $\Gamma_{1, 3}.$	
	
If $G\cong\mathbb{Z}_{p^2q^2}.$ As it was done in the case where $G\cong\mathbb{Z}_{pqr}$ we can prove that $\mathcal{P}^{**}(\mathbb{Z}_{p^2q^2})$ does not have any induced subgraph isomorphic to $\Gamma_{1, 3}.$

Case 4: Let $|G|$ has exactly one prime divisor $p$ say. Now $G\cong\mathbb{Z}_{p^t}$ as a result  $\mathcal{P}^{**}(\mathbb{Z}_{p^t})$ is complete. Hence the result.
\end{proof}
\begin{proposition}\label{prop: P^{**}(G), does not contain Gamma_2, G cyclic}
Let $G$ be a cyclic group. Then $\mathcal{P}^{**}(G)$ does not contain the graph $\Gamma_2$ (a graph in Figure \ref{fig:line grapph theory}) as an induced subgraph if and only if $G$ is one of the following:
\begin{enumerate}
\item[(a)] 
$G\cong \mathbb{Z}_{p^t}$
		\item[(b)] 
		$G\cong\mathbb{Z}_{pq}$
		\item[(c)] 
		$G\cong \mathbb{Z}_{12}$
		\item[(d)] 
		$G\cong\mathbb{Z}_{18}.$	
	\end{enumerate}	
\end{proposition}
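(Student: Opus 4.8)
The plan is to begin by recording the two structural facts that make $\mathcal{P}^{**}(G)$ tractable for cyclic $G$. Since $G$ is cyclic, for $a,b\in G$ we have $a\sim b$ in $\mathcal{P}(G)$ if and only if $\mathrm{o}(a)\mid \mathrm{o}(b)$ or $\mathrm{o}(b)\mid \mathrm{o}(a)$; that is, adjacency is governed entirely by the divisibility order on element orders, and the elements of a fixed order $d$ form a clique of size $\phi(d)$. Moreover, by Lemma \ref{lemma: classification of dominating vertices of power graph} the dominating vertices of $\mathcal{P}(G)$ are exactly the identity and the generators, so $V(\mathcal{P}^{**}(G))$ consists of all elements whose order is a proper nontrivial divisor of $n=|G|$. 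I would also note that $\Gamma_2$ is precisely $K_5$ with one edge deleted (in Figure \ref{fig:line grapph theory}, vertices $7$ and $9$ are the only non-adjacent pair). Consequently a copy of $\Gamma_2$ inside $\mathcal{P}^{**}(G)$ is exactly a set of five vertices among which exactly one pair is non-adjacent, i.e. exactly one pair of orders is incomparable under divisibility. This reformulation is the key device for the whole proof.

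For the ``only if'' direction I would prove the contrapositive through a single reusable construction. Suppose $n$ admits a divisor $m=p^iq^j$ with $p\neq q$ prime, $i,j\geq 1$, $m<n$, and $\phi(m)\geq 3$. Take three distinct elements of order $m$ together with one element of order $p^i$ and one of order $q^j$; all five lie in $\mathcal{P}^{**}(G)$ since their orders are proper and nontrivial. The three order-$m$ elements form a triangle, both $p^i$ and $q^j$ divide $m$ and are hence adjacent to all of them, while $p^i$ and $q^j$ are incomparable and so nonadjacent. This is an induced $\Gamma_2$. It then remains to show that every cyclic group not isomorphic to $\mathbb{Z}_{p^t},\mathbb{Z}_{pq},\mathbb{Z}_{12},\mathbb{Z}_{18}$ possesses such an $m$: if $n$ has at least three prime divisors, or if $n=p^aq^b$ with $a+b\geq 3$ and $\{p,q\}\neq\{2,3\}$, one uses $m=pq$, since then $pq$ is a proper divisor and $\phi(pq)=(p-1)(q-1)\geq 3$; the residual family $n=2^a3^b$ (where $\phi(6)=2$ is too small) is handled by $m=18$ when $b\geq 2$ and by $m=12$ when $b=1$ (forcing $a\geq 3$), these being proper divisors with $\phi=6$ and $\phi=4$ precisely because $n\neq 12,18$.

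For the ``if'' direction I would dispatch the four listed families directly. For $G\cong\mathbb{Z}_{p^t}$ the graph $\mathcal{P}^{**}(G)$ is complete, and for $G\cong\mathbb{Z}_{pq}$ it is a disjoint union $K_{p-1}\sqcup K_{q-1}$ of two cliques; neither a complete graph nor a disjoint union of cliques contains an induced $K_5-e$ (the former forces a full $K_5$ on any five vertices, the latter cannot house the connected graph $K_5-e$). The cases $\mathbb{Z}_{12}$ and $\mathbb{Z}_{18}$ are the substantive ones, and I would treat them via the reformulation above: list the proper nontrivial divisors, their multiplicities $\phi(d)$, and the incomparability relation. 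In both cases the incomparability graph on the four divisor-values is a path $P_4$, so five vertices realizing exactly one incomparable pair would be forced to contain three vertices of a single ``connector'' order lying between the two endpoints of that incomparable pair; but every available connector order has at most $\phi(d)=2$ elements (and some incomparable pairs admit no connector at all), so three such vertices never exist, and no induced $\Gamma_2$ occurs.

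I expect the main obstacle to be precisely this negative part for $\mathbb{Z}_{12}$ and $\mathbb{Z}_{18}$: the construction in the ``only if'' direction is uniform and short, whereas excluding $\Gamma_2$ requires genuinely verifying that no admissible five-vertex configuration exists. The reformulation ``$\Gamma_2$ is induced $\Longleftrightarrow$ exactly one incomparable order-pair among the five chosen orders,'' combined with careful bookkeeping of the small multiplicities $\phi(d)$, is what makes this verification finite and transparent; the numerology $\phi(6)=2<3$ is exactly the arithmetic reason these two orders are exceptional.
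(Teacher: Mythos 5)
Your strategy coincides with the paper's in its main thrust: in the positive direction you exhibit an induced $\Gamma_2=K_5-e$ by taking three elements of a common composite order $m=p^iq^j$ (a proper divisor with $\phi(m)\geq 3$) together with one element of order $p^i$ and one of order $q^j$, and in the negative direction you check the four listed families directly; this is exactly the paper's construction. One slip needs fixing: in the branch where $n$ has at least three distinct prime divisors you set $m=pq$ and assert $\phi(pq)=(p-1)(q-1)\geq 3$, which fails when the two chosen primes are $2$ and $3$ (e.g.\ $n=30$). You must pick two prime divisors other than the pair $\{2,3\}$, which is always possible when there are three or more primes; the paper does this by using the product $qr$ of the two largest primes $p<q<r$. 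With that one-line repair your covering is complete, and indeed it handles the case $n=p^aq^b$ with $a,b\geq 2$ (e.g.\ $\mathbb{Z}_{36}$) more explicitly than the paper, whose case analysis only lists $\mathbb{Z}_{p^tq}$ and $\mathbb{Z}_{p^2q}$. Where you genuinely diverge is the exclusion of $\Gamma_2$ from $\mathcal{P}^{**}(\mathbb{Z}_{12})$ and $\mathcal{P}^{**}(\mathbb{Z}_{18})$: the paper argues via vertex degrees in the ambient graph (e.g.\ that $\mathcal{P}^{**}(\mathbb{Z}_{18})$ has no vertex of degree $3$ whereas $\Gamma_2$ does), which conflates degree in the host graph with degree in an induced subgraph; your reformulation---five vertices with exactly one incomparable pair of orders force three further vertices whose orders are comparable to both endpoints of that pair, and each admissible ``connector'' order $d$ carries only $\phi(d)\leq 2$ elements---is sounder and amounts to a complete finite check. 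Do note in the write-up that the three connectors need not a priori share one order, only pairwise comparable orders each comparable to both endpoints; in these two groups at most one admissible connector order exists per incomparable pair, so your count goes through, but that observation should be stated rather than assumed.
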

\begin{proof}
Let $G$ be a cyclic group of order $n$ such that $n$ has at least three distinct prime divisors $p<q<r$ (say). Let $v_1^{(qr)}, v_2^{(qr)}, v_3^{(qr)}, v^{(q)}, v^{(r)}\in G$ such that   $\text{o}(v_1^{(qr)})=\text{o}(v_2^{(qr)})=\text{o}(v_3^{(qr)})=qr, \text{o}(v^{(q)})=q, \text{o}(v^{(r)})=r$ (since $G$ is cylcic and $|G|$ has at least three distinct prime divisors $p, q, r$ with $p<q<r,$ then clearly $\phi(qr)\geq 3).$ Clearly, $v_1^{(qr)}, v_2^{(qr)}, v_3^{(qr)},v^{(q)}, v^{(r)}\in V(\mathcal{P}^{**}(G)).$ Now we replace the vertices of the graph $\Gamma_2$ in Figure \ref{fig:line grapph theory} in the following way:
\[6 \text{  by } v_1^{(qr)}, 8 \text{  by } v_2^{(qr)}, 10 \text{  by } v_3^{(qr)}, 7 \text{  by } v^{(q)}, 9 \text{  by } v^{(r)}.\] Clearly, the resulting induced graph isomorphic to $\Gamma_2.$
	
If $G\cong\mathbb{Z}_{p^tq}, t\geq 3,$ in this case $\mathcal{P}^{**}(G)$ have the vertices namely, $v_1^{(p^2q)}, v_2^{(p^2q)}, v_3^{(p^2q)}, v^{(p^2)}, v^{(q)},$ where $\text{o}(v_1^{(p^2q)})=\text{o}(v_1^{(p^2q)})=\text{o}(v_1^{(p^2q)})=p^2q, (\text{ as } \phi(p^2q)\geq 3) \text{o}(v^{(p^2)})=p^2, \text{o}(v^{(q)})=q.$ Now we replace the vertices of the graph $\Gamma_2$ in the following way: 
\[6 \text{  by } v_1^{(p^2q)}, 8 \text{  by }v_2^{(p^2q)}, 10 \text{  by } v_3^{(p^2q)}, 7 \text{  by } v^{(p^2)}, 9 \text{  by } v^{(q)}.\] Clearly, the resulting induced graph is isomorphic to $\Gamma_2$ (in Figure \ref{fig:line grapph theory}).
	
Let $G\cong\mathbb{Z}_{p^2q}.$ Suppose $\phi(pq)\geq 3,$ then $\mathcal{P}^{**}(G)$ has three vertices $v_1^{(pq)}, v_2^{(pq)}, v_3^{(pq)}$ such that $\text{o}(v_1^{(pq)})=pq=\text{o}(v_2^{(pq)})=\text{o}(v_3^{(pq)}).$ Also it has two vertices $v^{(p)}$ and $v^{(q)}$ such that $\text{o}(v^{(p)})=p$ and $\text{o}(v^{(q)})=q.$ Now we replace the vertices of the graph $\Gamma_2$ in the following way: 
\[6 \text{  by } v_1^{(pq)}, 8 \text{  by } v_2^{(pq)}, 10 \text{  by } v_3^{(pq)}, 7 \text{  by } v^{(p)}, 9 \text{  by } v^{(q)}.\] Clearly, the resulting induced graph is isomorphic to $\Gamma_2$ (in Figure \ref{fig:line grapph theory}).

Now $\phi(pq)=2$ if and only if either $p=2, q=3$ or $q=2, p=3.$ So, either $G\cong\mathbb{Z}_{12}$ or $G\cong\mathbb{Z}_{18}.$ In these two cases we prove that $\mathcal{P}^{**}(\mathbb{Z}_{12})$ and $\mathcal{P}^{**}(\mathbb{Z}_{18})$ don't have an induced subgraph isomorphic to the graph $\Gamma_2$ in Figure \ref{fig:line grapph theory}. In fact, in $\mathcal{P}^{**}(\mathbb{Z}_{12})$ there are exactly two vertices namely, $v_1^{(3)}, v_2^{(3)}$ of order $3$ and they generate same cyclic group. Also in $\mathcal{P}^{**}(G), \text{deg}(v_1^{(3)})=3=\text{deg}(v_2^{(3)})$ and $v_1^{(3)}, v_2^{(3)}$ are the only vertices of degree $3.$ But the graph $\Gamma_2$ has two vertices of degree $3$ and they are not edge connected. Therefore, $\mathcal{P}^{**}(\mathbb{Z}_{12})$ has no induced subgraph isomorphic to $\Gamma_2.$ Again, in $\mathcal{P}^{**}(\mathbb{Z}_{18})$ there is no vertex $v$ such that $\text{deg}(v)=3.$ Hence in both of the cases it is not possible. Now if $G\cong\mathbb{Z}_{p^t},$ then $\mathcal{P}^{**}(\mathbb{Z}_{p^t})$ is complete. So $\mathcal{P}^{**}(\mathbb{Z}_{p^t})$ does not have any induced subgraph isomorphic to $\Gamma_2.$ Also for the group $G\cong \mathbb{Z}_{pq},$ the graph $\mathcal{P}^{**}(\mathbb{Z}_{pq})$ is disjoint union of two cliques. Therefore, $\mathcal{P}^{**}(\mathbb{Z}_{pq})$ does not have any induced subgraph isomorphic to the graph $\Gamma_2$ (in Figure \ref{fig:line grapph theory}). This completes the proposition. 
\end{proof}
\begin{proof}[ Proof of Theorem \ref{classify: G cyclic line graph, P^{**}(G)}]
Clearly, from Propositions \ref{prop:P^{**}(G), does not contain star graph Gamma(1, 3), G cylic} and \ref{prop: P^{**}(G), does not contain Gamma_2, G cyclic}, it follows that $\mathcal{P}^{**}(G)$ is a line graph of some graph $\Gamma$ if and only if either $G\cong\mathbb{Z}_{p^t}$ or $G\cong\mathbb{Z}_{pq}$ or $G\cong\mathbb{Z}_{12}$ or $G\cong\mathbb{Z}_{18}.$ Now we show that if $G\cong\mathbb{Z}_{12}$ or $\mathbb{Z}_{18},$ then there exists no graph $\Gamma$ such that $\mathcal{P}^{**}(G)$ is line graph of $\Gamma.$ Let  $G\cong\mathbb{Z}_{12},$ then we show that $\mathcal{P}^{**}(\mathbb{Z}_{12})$ has an induced subgraph isomorphic to the graph $\Gamma_4$ in Figure \ref{fig:line grapph theory}. Clearly $\mathbb{Z}_{12}$ has elements $v_1^{(6)}, v_2^{(6)}, v_1^{(3)}, v_2^{(3)}, v^{(2)}, v^{(4)}$ such that $\text{o}(v_1^{(6)})=\text{o}(v_2^{(6)})=6, \text{o}(v_1^{(3)})=\text{o}(v_2^{(3)})=3, \text{o}(v^{(2)})=2 \text{ and }\text{o}(v^{(4)})=4.$ Now it is easy to see that $\mathcal{P}^{**}(\mathbb{Z}_{12})$ contains the graph $\Gamma_4$ as an induced subgraph by replacing the vertices of the graph $\Gamma_4$ in the following way:
\[17 \text{ by } v_1^{(6)}, 18 \text{ by } v^{(2)}, 19 \text{ by } v_2^{(6)}, 20 \text{ by } v_1^{(3)}, 21 \text{ by } v^{(4)}, \text{ and } 22 \text{ by } v_2^{(3)}.\]  
Let $G\cong\mathbb{Z}_{18},$ then we show that  $\mathcal{P}^{**}(G)$ has an induced subgraph isomorphic to the graph $\Gamma_3$ in Figure \ref{fig:line grapph theory}. The group $\mathbb{Z}_{18}$ has elements $v_1^{(3)}, v_2^{(3)}, v_1^{(6)}, v_2^{(6)}, v_1^{(9)}, v_2^{(9)}$ such that $\text{o}(v_1^{(3)})=\text{o}(v_2^{(3)})=3, \text{o}(v_1^{(6)})=\text{o}(v_2^{(6)})=6 \text{ and } \text{o}(v_1^{(9)})= \text{o}(v_2^{(9)})=9.$ Now we replace the vertices of the graph $\Gamma_3$ in the following way:
\[11 \text{ by } v_1^{(3)}, 12 \text{ by } v_1^{(6)}, 13 \text{ by } v_2^{(3)}, 14 \text{ by } v_1^{(9)}, 15 \text{ by } v_2^{(6)}, \text{ and } 16 \text{ by } v_2^{(9)}.\] Then the resulting graph is isomorphic to the graph $\Gamma_3.$ Hence the graphs $\mathcal{P}^{**}(\mathbb{Z}_{12})$ and $\mathcal{P}^{**}(\mathbb{Z}_{18})$ are not line graph.  
	
Let $G\cong\mathbb{Z}_{pq},$ in this case $\mathcal{P}^{**}(\mathbb{Z}_{pq})$ is the line graph of the graph $\Gamma_{1, \phi(p)}\bigoplus \Gamma_{1, \phi(q)}.$ Also for the cyclic $p$-group $\mathbb{Z}_{p^r}, \mathcal{P}^{**}(\mathbb{Z}_{p^r})$ is the empty graph (empty graph is line graph). Hence the theorem.	
\end{proof}
Now we want to describe all non cyclic abelian groups $G$ for which $\mathcal{P}^{**}(G)$ is a line graph. For this case we have the following: 
\begin{theorem}\label{thm: non cyclic abeln grp line graph of {P}^{**}(G)}
Let $G$ be a non cyclic abelian group. Then $\mathcal{P}^{**}(G)$ is a line graph of some graph $\Gamma$ if and only if $G$ is one of the following:
\begin{enumerate}
\item[(a)] 
$G\cong \mathbb{Z}_2\times \mathbb{Z}_{2^{2}}$
\item[(b)]	
$G\cong \mathbb{Z}_{2^2}\times \mathbb{Z}_{2^2}$
\item[(c)]
$G\cong\mathbb{Z}_p\times \cdots\times \mathbb{Z}_p, p \text{ is prime }.$	
\end{enumerate}	
\end{theorem}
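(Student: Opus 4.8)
The plan is to prove both directions by exploiting the forbidden-subgraph characterization of line graphs (Lemma \ref{line graph}), just as in the cyclic case. For a non-cyclic abelian group $G$, by the structure theorem $G$ decomposes as a direct product of cyclic $p$-groups, and I would organize the proof around the number of distinct primes dividing $|G|$ and the exponents of the Sylow subgroups. The key reduction is that for $\mathcal{P}^{**}(G)$ to be a line graph, it must in particular avoid the star $\Gamma_{1,3}$ (which is $K_{1,3}$, the claw); so the first step is to show that whenever $|G|$ has two or more distinct prime divisors and $G$ is non-cyclic, one can find a low-order vertex adjacent to three mutually non-adjacent vertices. Concretely, if $p,q$ are distinct primes dividing $|G|$, elements of coprime prime orders commute and generate a cyclic subgroup, so an element $v$ of order $p$ is joined in $\mathcal{P}^{**}(G)$ to several pairwise non-adjacent elements drawn from distinct order-$p$ subgroups together with elements of order $pq$; the non-cyclicity guarantees at least three distinct cyclic subgroups of order $p$ (by the same center argument used in Theorem \ref{thm: P(G) line graph, G non abelian nilpotent}), yielding the claw and killing all such $G$.

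Next I would dispose of the purely $p$-group case $G \cong \mathbb{Z}_{p^{a_1}} \times \cdots \times \mathbb{Z}_{p^{a_k}}$ with $k \geq 2$. The surviving candidates in the statement are the elementary abelian groups $(\mathbb{Z}_p)^k$ and the two small $2$-groups $\mathbb{Z}_2 \times \mathbb{Z}_4$ and $\mathbb{Z}_4 \times \mathbb{Z}_4$, so I must rule out everything else. For the elementary abelian group every non-identity element has order $p$, so $\mathcal{P}^{**}((\mathbb{Z}_p)^k)$ is a disjoint union of cliques (one per order-$p$ subgroup, each of size $p-1$) and hence trivially a line graph, being $L$ of a disjoint union of stars. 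For the remaining $p$-groups I would search for one of the nine forbidden subgraphs: when some exponent exceeds what the two exceptional $2$-groups allow, there are elements of order $p^2$ (or higher) sitting above several distinct order-$p$ subgroups, and by choosing generators and their proper powers appropriately I expect to embed $\Gamma_2$, $\Gamma_3$, or $\Gamma_4$ exactly as the relabelings in Proposition \ref{prop: P^{**}(G), does not contain Gamma_2, G cyclic} do. The two exceptional cases $\mathbb{Z}_2 \times \mathbb{Z}_4$ and $\mathbb{Z}_4 \times \mathbb{Z}_4$ are small enough that I would verify directly that $\mathcal{P}^{**}(G)$ contains none of the nine graphs, ideally by exhibiting an explicit graph $\Gamma$ with $\mathcal{P}^{**}(G) = L(\Gamma)$.

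For the forward (easy) direction I would confirm each of the three listed families really yields a line graph by producing the preimage graph: the elementary abelian case is a disjoint union of complete graphs $K_{p-1}$, each realized as $L(\Gamma_{1,p-1})$, and the two exceptional $2$-groups I would handle by direct construction of $\Gamma$ after computing their proper power graphs. The main obstacle I anticipate is the $p$-group elimination step: unlike the cyclic setting, here the order structure is richer because an element of order $p^2$ may lie over any of several order-$p$ subgroups, and I must argue uniformly that any exponent-or-rank configuration outside the three listed families forces a forbidden induced subgraph. The delicate book-keeping is to locate, among the order-$p$ and order-$p^2$ vertices, the precise incidence pattern matching one of $\Gamma_1,\dots,\Gamma_9$; I would streamline this by first observing that a dominating-vertex analysis (via Lemma \ref{lemma: classification of dominating vertices of power graph}, noting a non-cyclic group has no dominating vertex other than the identity, so $\mathcal{P}^{**}(G) = \mathcal{P}^*(G)$) pins down exactly which vertices survive, and then reusing the claw and $\Gamma_2$ tests already developed for the cyclic case as templates for the relabelings.
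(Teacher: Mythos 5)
Your overall strategy coincides with the paper's: reduce necessity to a forbidden-induced-subgraph search via Lemma \ref{line graph}, and prove sufficiency by exhibiting explicit preimage graphs for the three surviving families (disjoint unions of stars for $(\mathbb{Z}_p)^k$, hand-built graphs for $\mathbb{Z}_2\times\mathbb{Z}_{2^2}$ and $\mathbb{Z}_{2^2}\times\mathbb{Z}_{2^2}$). The paper in fact shows that the single forbidden graph $K_{1,3}=\Gamma_9$ already performs \emph{all} of the elimination here: every non-cyclic abelian group outside the three listed families contains an induced claw. Two points in your plan need repair before it is a proof.

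First, your claw in the multi-prime case is mis-specified. If the centre $v$ has order $p$, then an order-$p$ element lying in a \emph{different} subgroup of order $p$ is not adjacent to $v$ in the power graph, so leaves ``drawn from distinct order-$p$ subgroups'' give an independent set that is not joined to $v$, not a claw. The construction that works (the paper's) takes the centre to be a fixed element $c$ whose order is the \emph{other} prime $q$, and the three leaves to be elements of order $pq$ generating distinct cyclic subgroups of order $pq$, each containing $\langle c\rangle$; non-cyclicity of the Sylow $p$-subgroup supplies $p+1\geq 3$ such subgroups, namely $\langle a_ic\rangle$ for distinct order-$p$ subgroups $\langle a_i\rangle$. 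Second, the $p$-group elimination is the real content of the theorem and you leave it at ``I expect to embed $\Gamma_2$, $\Gamma_3$, or $\Gamma_4$.'' That hunt is both unverified and misdirected: the claw again suffices in every non-exceptional case. For instance, in $\mathbb{Z}_p\times\mathbb{Z}_{p^t}$ with $p$ odd and $t\geq 2$ take centre $(\bar{0},\bar{p})$ and leaves $(\bar{0},\bar{1}),(\bar{1},\bar{1}),(\bar{2},\bar{1})$ --- three elements of order $p^t$ generating distinct cyclic subgroups, each containing $\langle(\bar{0},\bar{p})\rangle$; this is exactly where $p\geq 3$ is used, and its failure at $p=2$ is what produces the exceptions $\mathbb{Z}_2\times\mathbb{Z}_{2^2}$ and $\mathbb{Z}_{2^2}\times\mathbb{Z}_{2^2}$ (for which one must separately check claw-freeness and then exhibit $\Gamma$, as you propose). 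A complete argument still requires the case analysis over rank and exponents (at least three summands of exponent $\geq p^2$; exactly two; exactly one; $p=2$ versus $p$ odd) with explicit witnesses in each case; as written, your plan identifies the correct targets but does not establish that every excluded $p$-group actually contains a forbidden induced subgraph.
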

To prove Theorem \ref{thm: non cyclic abeln grp line graph of {P}^{**}(G)}, we use Lemma \ref{line graph}. According to this lemma we have to characterize all non cyclic abelian groups for which  $\mathcal{P}^{**}(G)$ has an induced subgraph isomorphic to the star graph $\Gamma_{1, 3}.$ Now Proposition \ref{Prop: noncyclic, abln, {P}^{**}(G) does not contain Gamma_{1, 3}  } completely describes this case.
\begin{proposition}\label{Prop: noncyclic, abln, {P}^{**}(G) does not contain Gamma_{1, 3}  }
Let $G$ be a non cyclic abelian group. Then $\mathcal{P}^{**}(G)$ does not contain $\Gamma_{1, 3}$ as an induced subgraph if and only if $G$ is one of the following:
\begin{enumerate}
\item[(a)] 
$G\cong \mathbb{Z}_2\times \mathbb{Z}_{2^{2}}$
\item[(b)]
$G\cong \mathbb{Z}_{2^2}\times \mathbb{Z}_{2^2}$
\item[(c)]
$G\cong\mathbb{Z}_p\times \cdots\times \mathbb{Z}_p, p \text{ is prime }.$	
\end{enumerate}	
\end{proposition}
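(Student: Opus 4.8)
The plan is to combine the structure theorem for finite abelian groups with the observation that, for a non-cyclic abelian $G$, the only dominating vertex of $\mathcal{P}(G)$ is the identity. Indeed $G$ is neither cyclic, nor a cyclic $p$-group, nor generalized quaternion, so Lemma \ref{lemma: classification of dominating vertices of power graph} gives $\Dom(\mathcal{P}(G))=\{e\}$, whence $V(\mathcal{P}^{**}(G))=G\setminus\{e\}$. The engine is an elementary reduction: for any subgroup $H\leq G$ the power relation is intrinsic to $H$, so the subgraph of $\mathcal{P}^{**}(G)$ induced on $H\setminus\{e\}$ is exactly $\mathcal{P}^{*}(H)$. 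Hence, for the forward (``only if'') direction it suffices to produce, for each non-cyclic abelian $G$ not on the list, a subgroup $H$ whose deleted power graph already contains an induced $\Gamma_{1,3}$; and for the converse it suffices to check directly that the three listed families are $\Gamma_{1,3}$-free.

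For the forward direction I would first dispose of the multi-prime case. If $|G|$ has two distinct prime divisors $p,q$, then since $G$ is non-cyclic some Sylow subgroup is non-cyclic; that Sylow has rank at least $2$, so together with an order-$q$ element it shows $G\supseteq \mathbb{Z}_p\times\mathbb{Z}_p\times\mathbb{Z}_q$. Here I would exhibit a claw with centre an element $b$ of order $q$ and leaves $h_i=a_i+b$, where $a_1,a_2,a_3$ generate three distinct subgroups of order $p$ in $\mathbb{Z}_p\times\mathbb{Z}_p$ (there are $p+1\geq 3$ of them). Each $h_i$ has order $pq$ with $b\in\langle h_i\rangle$, so $h_i\sim b$, while the order-$p$ part of $\langle h_i\rangle$ is $\langle a_i\rangle$, forcing the $h_i$ into distinct cyclic subgroups and hence making them pairwise non-adjacent. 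This reduces everything else to $p$-groups.

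For non-cyclic abelian $p$-groups, write $G\cong \mathbb{Z}_{p^{e_1}}\times\cdots\times\mathbb{Z}_{p^{e_k}}$ with $k\geq 2$; the elementary abelian case $e_1=\cdots=e_k=1$ is case (c), so I may assume some $e_i\geq 2$. I would then split into three sub-cases, each housing a small obstruction subgroup: (i) $p$ odd, where $G\supseteq \mathbb{Z}_p\times\mathbb{Z}_{p^2}$; (ii) $p=2$ and rank $k\geq 3$, where $G\supseteq \mathbb{Z}_2\times\mathbb{Z}_2\times\mathbb{Z}_4$; and (iii) $p=2$, rank $2$, exponent at least $8$, where $G\supseteq \mathbb{Z}_2\times\mathbb{Z}_8$. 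In each obstruction I would locate an element $z$ of order $p$ lying in the cyclic subgroups generated by many elements of larger order, and count those subgroups: for $\mathbb{Z}_p\times\mathbb{Z}_{p^2}$ all $p$ cyclic subgroups of order $p^2$ share the same socle $\langle z\rangle$; for $\mathbb{Z}_2\times\mathbb{Z}_2\times\mathbb{Z}_4$ the fixed square $z$ lies below four cyclic subgroups of order $4$; and for $\mathbb{Z}_2\times\mathbb{Z}_8$ the element $z$ lies below two cyclic subgroups of order $8$ and a third of order $4$. In every case $z$ has at least three pairwise non-adjacent neighbours (one generator from three distinct such subgroups), yielding an induced $\Gamma_{1,3}$ centred at $z$. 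The only non-elementary $p$-groups left uncovered are then $\mathbb{Z}_2\times\mathbb{Z}_4$ and $\mathbb{Z}_4\times\mathbb{Z}_4$, i.e. exactly cases (a) and (b).

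Finally, for the converse I would verify that the three families are claw-free. For $\mathbb{Z}_p\times\cdots\times\mathbb{Z}_p$ every non-identity element has order $p$, so two are adjacent iff they generate the same subgroup; thus $\mathcal{P}^{**}(G)$ is a disjoint union of cliques, which has no induced $P_3$, let alone $\Gamma_{1,3}$. For $\mathbb{Z}_2\times\mathbb{Z}_4$ and $\mathbb{Z}_4\times\mathbb{Z}_4$ I would compute the neighbourhood of each vertex of degree at least $3$ and observe that it is a disjoint union of two edges ($2K_2$), whose independence number is $2$; since a claw needs a vertex with three pairwise non-adjacent neighbours, none can occur. The main obstacle, and where the statement is genuinely delicate, is precisely this $p=2$, rank-$2$ boundary: the whole dichotomy turns on the number of cyclic subgroups of order $p^2$ through a fixed order-$p$ element — this count is exactly $2$ (a harmless $2K_2$) for $\mathbb{Z}_2\times\mathbb{Z}_4$ and $\mathbb{Z}_4\times\mathbb{Z}_4$, but jumps to $3$ or more the instant the rank, the exponent, or the prime is raised, at which point the claw appears. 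Getting these counts exactly right, rather than merely up to order of magnitude, is the crux of the argument.
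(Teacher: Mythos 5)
Your proposal is correct, and the claws you build are the same in spirit as the paper's (a low-order centre $z$ whose cyclic subgroup sits inside at least three distinct larger cyclic subgroups, with one generator of each taken as the leaves), but your organization of the case analysis is genuinely different. The paper works directly with the full primary decomposition $G\cong\mathbb{Z}_p^k\times\mathbb{Z}_{p^{t_1}}\times\cdots\times\mathbb{Z}_{p^{t_r}}$ and splits on $r\geq 3$, $r=2$, $r=1$ (with further subcases on $p$ odd versus $p=2$ and on the exponents), writing down explicit tuples for the four claw vertices in each branch; the boundary groups $\mathbb{Z}_2\times\mathbb{Z}_4$ and $\mathbb{Z}_4\times\mathbb{Z}_4$ then surface as the leftover subcases of $r=1$ and $r=2$ respectively. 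You instead isolate the hereditary reduction (the induced subgraph of $\mathcal{P}^{**}(G)$ on $H\setminus\{e\}$ is $\mathcal{P}^{*}(H)$, legitimate here because $\Dom(\mathcal{P}(G))=\{e\}$ for non-cyclic abelian $G$) and reduce the entire forward direction to three minimal obstruction subgroups, $\mathbb{Z}_p\times\mathbb{Z}_{p^2}$ for odd $p$, $\mathbb{Z}_2\times\mathbb{Z}_2\times\mathbb{Z}_4$, and $\mathbb{Z}_2\times\mathbb{Z}_8$, plus one multi-prime obstruction $\mathbb{Z}_p\times\mathbb{Z}_p\times\mathbb{Z}_q$; your subgroup-counting in each (the $p$, $4$, and $2{+}1$ cyclic overgroups of the fixed socle element) is accurate, and your residue of uncovered groups is exactly $\mathbb{Z}_2\times\mathbb{Z}_4$ and $\mathbb{Z}_4\times\mathbb{Z}_4$. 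What your route buys is economy and a clear explanation of \emph{why} the exceptional list is what it is (the count of cyclic overgroups of a fixed order-$p$ element jumps from $2$ to $\geq 3$ precisely when the rank, exponent, or prime grows); what the paper's route buys is that every claw is exhibited by explicit coordinates in $G$ itself without invoking the subgroup-restriction principle. The converse verifications (disjoint unions of cliques for the elementary abelian case, and the $2K_2$ neighbourhood computation for the two small $2$-groups) coincide with the paper's.
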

\begin{proof}
First let $|G|$ has at least two distinct prime divisors. In this case, we show that  $\mathcal{P}^{**}(G)$ has an induced subgraph isomorphic to $\Gamma_{1, 3}(v, v', v'', v'''),$ for some vertices $v, v', v'', v'''\in V(\mathcal{P}^{**}(G)).$ It is given that $G$ is non-cyclic abelian group. Therefore,
\[G\cong\mathbb{Z}_{p^{t_{11}}_1}\times \cdots\times\mathbb{Z}_{p^{t_{1k_1}}_1}\times \mathbb{Z}_{p^{t_{21}}_2}\times\cdots\times\mathbb{Z}_{p^{t_{2k_2}}_2}\times\cdots\times
\mathbb{Z}_{p^{t_{r1}}_r}\times\cdots\times\mathbb{Z}_{p^{t_{rk_r}}_r},\] where $1\leq t_{i1}\leq t_{i2}\leq\cdots\leq t_{ik_i}, $ for all $i\in [r], r\geq 2, k_i\geq1$ and there exists at least one $k_i$ such that $k_i\geq 2.$ So, without loss of generality, we assume that $k_1\geq 2.$ Consider \[V=\{ (\underbrace{\bar{a}, \bar{b},\bar{0}, \cdots, \bar{0}}_{k_1 \text{ times }}, \bar{c}, \bar{0}, \cdots, \bar{0}): \bar{a}, \bar{b}, \bar{c}\in G \text{ and } \text{o}(\bar{a})=\text{o}(\bar{b})=p_1, \text{o}(\bar{c})=p_2\}.\] Clearly, $V$ is a subset of $G$ and each element of $V$ is of order $p_1p_2$ and $|V|=(p_1^2-1)(p_2-1).$ Again these $(p_1^2-1)(p_2-1)$ number of elements form $\frac{(p_1^2-1)(p_2-1)}{\phi(p_1p_2)}=p_1+1$ number of distinct cyclic groups say $H_1, H_2, \cdots, H_{p_1+1},$ where order of each $H_i$ is $p_1p_2.$ Now, it is easy to see that, the cyclic group $\langle \underbrace{(\bar{0}, \cdots, \bar{0}}_{k_1\text{ times }}, \bar{c}, \bar{0}\cdots, \bar{0} )\rangle$ is contained in each of the cyclic groups $H_1, \cdots, H_{p_1+1},$ where $\text{o}(\bar{c})=p_2.$ Since $p_1+1\geq 3,$ we can choose three distinct vertices $v_{i_1}^{(p_1p_2)}, v_{i_2}^{(p_1p_2)} \text{ and } v_{i_3}^{(p_1p_2)}$ from $H_{i_1}, H_{i_2}$ and $H_{i_3}$ (respectively) such that $\text{o}(v_{i_j}^{(p_1p_2)})=p_1p_2,$ for $j\in \{1, 2, 3\}$ and $i_1, i_2, i_3\in \{1, \cdots, p_1+1\}.$ Also we take the vertex $v^{(p_2)}= \underbrace{(\bar{0}, \cdots, \bar{0}}_{k_1\text{ times }}, \bar{c}, \bar{0}\cdots, \bar{0} ).$ Then we get $\Gamma_{1, 3}(v, v', v'', v''')$ as an induced subgraph in the graph $\mathcal{P}^{**}(G),$ where $v=v^{(p_2)},v'= v_{i_1}^{(p_1p_2)}, v''=v_{i_2}^{(p_1p_2)}, v'''=v_{i_3}^{(p_1p_2)}.$
 
Now suppose that $G$ is a non cyclic abelian $p$-group. Then we can say that $G\cong \underbrace{\mathbb{Z}_p\times\cdots\times \mathbb{Z}_p}_{k (\geq 0) \text{ times }}\times \mathbb{Z}_{p^{t_1}}\times\cdots \times\mathbb{Z}_{p^{t_r}},$ where $t_1\leq t_2\leq\cdots\leq t_r, t_i\geq 2 \text{ for all }i.$ For this particular groups ( non cyclic abelian $p$-groups), we break the prove in several cases.

Case 1: First suppose that $r\geq 3.$ In this case, we show that $\mathcal{P}^{**}(G)$ has an induced subgraph $\Gamma_{1, 3}(v^{(p)}, v_1^{(p^{(t_1)})}, v_2^{(p^{(t_1)})}, v_3^{(p^{(t_1)})}),$ where 
\begin{align*}
&v^{(p)}=(\underbrace{\bar{0}, \cdots, \bar{0}}_{k \text{ times }}, \bar{a}, \bar{0}, \bar{0}, \bar{0}, \cdots, \bar{0})\\
&v_1^{(p^{(t_1)})}=(\underbrace{\bar{0}, \cdots, \bar{0}}_{k \text{ times }}, \bar{1}, \bar{b}, \bar{c}, \bar{0},\cdots, \bar{0})  \\
&v_2^{(p^{(t_1)})}=(\underbrace{\bar{0}, \cdots, \bar{0}}_{ k \text{ times }}, \bar{1}, \bar{b}, \bar{0}, \bar{0}, \cdots, \bar{0})\\ 
&v_3^{(p^{(t_1)})}=(\underbrace{\bar{0}, \cdots, \bar{0}}_{k \text{ times }}, \bar{1}, \bar{0}, \bar{0}, \bar{0}, \cdots, \bar{0}),
\end{align*}
${a}=p^{t_1-1}$ and $\text{o}(\bar{b})=\text{o}(\bar{c})=p.$ Clearly, $\text{o}(\bar{a})=p.$ Again $\bar{1}$ is a generator of the group $\mathbb{Z}_{p^{t_1}}$ and we can write $\bar{a}=p^{t_1-1}\bar{1}.$ Also $t_1\geq 2$ implies that $t_1-1\geq1.$ Now $\text{o}(\bar{b})=\text{o}(\bar{c})=p$ implies that $p^{t_1-1}v_1^{(p^{(t_1)})}=v^{(p)}.$ Therefore, $v_1^{(p^{(t_1)})}\sim v^{(p)}.$ Similarly, we can show that $v_2^{(p^{(t_1)})}\sim v^{(p)}$ and $v_3^{(p^{(t_1)})}\sim v^{(p)}.$ Now we show that $v_i^{(p^{(t_1)})}\nsim v_j^{(p^{(t_1)})},$ for all $i, j\in \{1, 2, 3\}.$ Note that $\text{o}(v_i^{(p^{(t_1)})})=p^{t_1}$ for each $i.$ So $v_i^{(p^{(t_1)})}\sim v_j^{(p^{(t_1)})}$ if and only if $\langle v_i^{(p^{(t_1)})}\rangle=\langle v_j^{(p^{(t_1)})}\rangle.$ But clearly it is not possible from the construction of the vertices.
 
Case 2: Let $r=2,$ so $G\cong\underbrace{{\mathbb{Z}_p\times\cdots \mathbb{Z}_p}}_{k (\geq 0) \text{ times }}\times \mathbb{Z}_{p^{t_1}}\times \mathbb{Z}_{p^{t_2}},$ where $t_1\leq t_2, t_i\geq 2 \text{ for all }i.$ 

Subcase 1: First suppose that $p$ is an odd prime. Since $\phi(p)\geq 2,$ we can choose two distinct elements $\bar{b}$ and $\bar{c}$ from $\mathbb{Z}_{p^{t_2}}$ such that $\text{o}(\bar{b})=\text{o}(\bar{c})=p.$ Also we can take another $p$-ordered element $\bar{a}\in \mathbb{Z}_{p^{t_1}}$ such that $a=p^{t_1-1}.$ Now we consider the vertices 
\begin{align*}
&v^{(p)}=(\bar{0}, \cdots, \bar{0}, \bar{a}, \bar{0}),&
&v^{(p^{t_1})}_1=(\bar{0}, \cdots, \bar{0}, \bar{1}, \bar{b}), \\
&v^{(p^{t_1})}_2=(\bar{0}, \cdots, \bar{0}, \bar{1}, \bar{c}),&  
&v^{(p^{t_1})}_3=(\bar{0}, \cdots, \bar{0}, \bar{1}, \bar{0}).
\end{align*}
Continuing as Case 1, we can show that  $\mathcal{P}^{**}(G)$ has an induced subgraph $\Gamma_{1, 3}(v^{(p)}, v^{(p^{t_1})}_1, v^{(p^{t_1})}_2, v^{(p^{t_1})}_3).$   

Subcase 2: Here we focus on the case $p=2.$ So,  $G\cong\underbrace{\mathbb{Z}_{2}\times\cdots\times \mathbb{Z}_{2}}_{k (\geq 0)\text{ times }}\times\mathbb{Z}_{2^{t_1}}\times\mathbb{Z}_{2^{t_2}}.$ In this case we first consider that at least one $t_1$ and $t_2\geq 3.$  Without loss of generality we assume that $t_1\geq 3.$ Here we consider the vertices 
\begin{align*}
&v=(\bar{0}, \cdots, \bar{0}, \bar{a}, \bar{0}),& &v_1=(\bar{0}, \cdots, \bar{0}, \bar{1}, \bar{b}),\\
&v_2=(\bar{0}, \cdots, \bar{0}, \bar{1}, \bar{c}), & 
&v_3=(\bar{0}, \cdots, \bar{0}, \bar{1}, \bar{0}),
\end{align*}
where $a=2^{t_1-1}, \text{o}(\bar{b})=2, \text{o}(\bar{c})=4.$ Clearly, $\mathcal{P}^{**}(G)$ has an induced subgrap $\Gamma_{1, 3}(v^{(2)}, v_1^{(2^{t_1})}, v_2^{(t_1)}, v_3^{(t_1)}).$  

Let $G\cong \underbrace{\mathbb{Z}_2\times \cdots\times\mathbb{Z}_2}_{k\geq 0 \text{ times }}\times\mathbb{Z}_{2^2}\times \mathbb{Z}_{2^2}.$ It is easy to see that $\mathcal{P}^{**}(G)$ has an induced subgraph $\Gamma_{1, 3}(v^{(2)}, v_1^{(4)}, v_2^{(4)}, v_3^{(4)}),$ where 
\begin{align*}
&v^{(2)}=(\bar{0}, \cdots, \bar{0}, \bar{0}, \bar{0},\bar{2}),&
&v_1^{(4)}=(\bar{0}, \cdots, \bar{0}, \bar{1}, \bar{0}, \bar{1})\\
&v_2^{(4)}=(\bar{0}, \cdots, \bar{0}, \bar{0}, \bar{0}, \bar{1}),& 
&v_3^{(4)}=(\bar{0}, \cdots, \bar{0}, \bar{0}, \bar{2}, \bar{1})
\end{align*}
Case 3: Let $r=1.$ In this case $G\cong \underbrace{\mathbb{Z}_p\times \cdots \times \mathbb{Z}_p}_{k(\geq 1)\text{ times }}\times \mathbb{Z}_{p^t}, t\geq 2.$ 

Subcase 1: First suppose that $p\geq 3.$ Then $\mathcal{P}^{**}(G)$ has an induced subgraph $\Gamma_{1, 3}(v^{(p^{t-1})}, v_1^{(p^t)}, v_2^{(p^t)}, v_3^{(p^t)}),$  where 
\begin{align*}
&v^{(p^{t-1})}=(\bar{0},\cdots, \bar{0}, \bar{0}, \bar{p}),& 
&v_1^{(p^t)}=(\bar{0}, \cdots, \bar{0}, \bar{2}, \bar{1}),\\ 
&v_2^{(p^t)}=(\bar{0}, \cdots, \bar{0}, \bar{0}, \bar{1}),& 
&v_3^{(p^t)}=(\bar{0}, \cdots, \bar{0},\bar{1}, \bar{1})
\end{align*}
Subcase 2: Let $p=2.$ Then $G\cong \underbrace{\mathbb{Z}_2\times \cdots\mathbb{Z}_2}_{ k(\geq 1)\text{ times}}\times\mathbb{Z}_{2^t}, t\geq 2.$
In this case, first suppose that  $k\geq 2.$ Then the following vertices form an induced subgraph $\Gamma_{1, 3}(v^{(2^{t-1})}, v_1^{(2^t)}, v_2^{(2^t)}, v_3^{(2^t)}),$ where
\begin{align*}
&v^{(2^{t-1})}=(\bar{0}, \cdots, \bar{0}, \bar{0}, \bar{0},\bar{2}),&
&v_1^{(2^t)}=(\bar{0}, \cdots, \bar{0}, \bar{0}, \bar{1}, \bar{1})\\
&v_2^{(2^t)}=(\bar{0}, \cdots, \bar{0}, \bar{0}, \bar{0},\bar{1}),& 
&v_3^{(2^t)}=(\bar{0}, \cdots, \bar{0}, \bar{1}, \bar{0}, \bar{1}).
\end{align*}
 Now let $k=1.$ Then
$G\cong\mathbb{Z}_2\times \mathbb{Z}_{2^t}, t\geq 2.$ In this case, we show that $\mathcal{P}^{**}(G)$ has an induced subgraph isomorphic to $\Gamma_{1, 3}$ if and only if $t\geq 3.$ In fact, for $t\geq 3$ we have the induced subgraph $\Gamma_{1, 3}(v, v_1, v_2, v_3),$ where $v=(\bar{0}, \bar{4}), v_1=(\bar{1}, \bar{2}), v_2=(\bar{0}, \bar{1}), v_3=(\bar{1}, \bar{1}).$ 
Let $G\cong\mathbb{Z}_2\times\mathbb{Z}_{2^2}.$ Clearly, $\mathcal{P}^{**}(G)$ is the graph in Figure \ref{fig:pwr grapph for  group of order Z_2*Z_2^2}
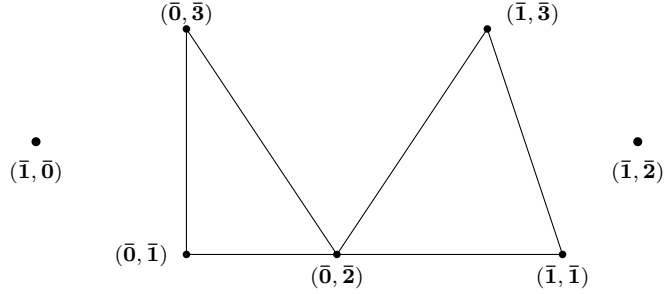
\begin{figure}[H]
	\tiny
	\centering
	\begin{tikzpicture}[scale=1]
	\tikzstyle{edge_style} = [draw=black, line width=2mm, ]
	\draw (1,0)--(3,0);
	\draw (1, 0)--(1,3);
	\draw (1,3)--(3,0);
	\draw (3,0)--(6,0);
	\draw (3,0)--(5,3);
	\draw (5,3)--(6,0);
	\node (e) at (.4,0){$\bf{(\bar{0}, \bar{1})}$};
	\node (e) at (3,-.3){$\bf{(\bar{0}, \bar{2})}$};
	\node (e) at (6,-.3){$\bf{(\bar{1}, \bar{1})}$};
	\node (e) at (1,3.2){$\bf{(\bar{0}, \bar{3})}$};
	\node (e) at (5.6,3.2){$\bf{(\bar{1}, \bar{3})}$};
	\node (e) at (-1,1.1){$\bf{(\bar{1}, \bar{0})}$};
	\node (e) at (7,1.1){$\bf{(\bar{1}, \bar{2})}$};
	\fill[black!100!] (3,0) circle (.05);
	\fill[black!100!] (1,0) circle (.05);
	\fill[black!100!] (1,3) circle (.05);
	\fill[black!100!] (5,3) circle (.05);
	\fill[black!100!] (6,0) circle (.05);
	\fill[black!100!] (-1,1.5) circle (.06);
	\fill[black!100!] (7,1.5) circle (.06);
	\end{tikzpicture}
	\caption{The proper power graph $\mathcal{P}^{**}(Z_2\times Z_2^2)$}
	\label{fig:pwr grapph for  group of order Z_2*Z_2^2}	
\end{figure}
Now we show that there is no vertices $v, v_1, v_2, v_3$ in $V(\mathcal{P}^{**}(G))$ such that $\mathcal{P}^{**}(G)$ has an induced subgraph $\Gamma_{1, 3}(v, v_1, v_2, v_3).$ The graph in Figure  \ref{fig:pwr grapph for  group of order Z_2*Z_2^2} has only one vertex namely, $(\bar{0}, \bar{1})$ such that $\text{deg}(\bar{0}, \bar{1})=4$ and the degree of all other vertices are $2.$ So, to form $\Gamma_{1, 3}(v, v_1, v_2, v_3)$ we should take $v=(\bar{0}, \bar{1}).$ Now it is clear that we can not choose $v_1, v_2, v_3$ such that no two of these three vertices are adjacent.
  
Let $G\cong\mathbb{Z}_{2^2}\times\mathbb{Z}_{2^2},$ then we show that $\mathcal{P}^{**}(G)$ does not have any induced subgraph isomorphic to $\Gamma_{1, 3}.$ If possible there is an induced subgraph $\Gamma_{1, 3}(v, v_1, v_2, v_3)$ for some $v, v_1, v_2, v_3\in \mathcal{P}^{**}(G).$ Then we show that $\text{o}(v)=2.$ First note that $v\sim v_i (i=1, 2, 3)$ implies $\langle v\rangle\subset \langle v_i\rangle,$ for all $i.$ If $\langle v_1\rangle\subset \langle v\rangle,$ then $v\sim v_2$ implies that either $\langle v\rangle\subset \langle v_2\rangle$ or $\langle v_2\rangle\subset \langle v\rangle.$ Since $G$ is a $p$-group, then in any cases $v_1\sim v_2,$ which is not possible. So, $\text{o}(v)=2$ and $\text{o}(v_i)=4,$ for all $i.$ First consider the two ordered element $v=(\bar{2}, \bar{0}).$ Our claim is that $v$ is edge connected with exactly two four ordered elements $v_1, v_2$ (say) such that $\langle v_1\rangle\neq \langle v_2\rangle.$ Clearly, $(\bar{2}, \bar{ 0})\nsim (\bar{a}, \bar{b}),$ where $\bar{b}=4.$ By our condition $\text{o}(\bar{a})=4.$ Therefore, $(\bar{2}, \bar{0})$ is edge connected with the vertices $(\bar{1},\bar{0}), (\bar{3},\bar{0}), (\bar{1},\bar{2}), (\bar{3},\bar{2}).$ Also, $\langle (\bar{1}, \bar{0})\rangle=\langle (\bar{3}, \bar{0})\rangle$ and $\langle (\bar{1}, \bar{2})\rangle=\langle (\bar{3}, \bar{2})\rangle.$ This proves our claim. The same claim holds for the vertex $(\bar{0}, \bar{2}).$ The remaining two ordered element is $(\bar{2}, \bar{2}).$ Let $(\bar{2}, \bar{2})\sim (\bar{a}, \bar{b}).$ Then we show that $\text{o}(\bar{a})=4=\text{o}(\bar{b}).$ It is easy to see neither $\bar{a}=\bar{0}$ nor $\bar{b}=\bar{0}.$ If $\bar{a}=\bar{2},$ then $(\bar{2}, \bar{2})\sim (\bar{2}, \bar{b})$ implies there exists $k\in \mathbb{N}$ such that $k\bar{2}=\bar{2}\Rightarrow k\bar{2}=\bar{2}\Rightarrow k=4\ell+1\Rightarrow (4\ell+1)\bar{b}\neq \bar{2}.$ Similar result holds if $\bar{b}=\bar{2}.$ As a result, $(\bar{2}, \bar{2})\in \langle (\bar{1}, \bar{3})\rangle=\langle (\bar{3}, \bar{1})\rangle$ and $\langle (\bar{1}, \bar{1})\rangle=\langle (\bar{3}, \bar{3})\rangle.$

Let $G\cong\underbrace{\mathbb{Z}_p\times \cdots\times \mathbb{Z}_p}_{k \text{ times  }}.$ Here the graph  $\mathcal{P}^{**}(G)$ is isomorphic to the graph  $\underbrace{K_{\phi(p)}\bigoplus\cdots\bigoplus K_{\phi(p)}}_{k \text{ times }}.$ This completes the proof.
\end{proof}
\begin{proof} [Proof of Theorem \ref{thm: non cyclic abeln grp line graph of {P}^{**}(G)}]
Let $\mathcal{P}^{**}(G)$ be a line graph of some graph $\Gamma.$ Then by Proposition \ref{Prop: noncyclic, abln, {P}^{**}(G) does not contain Gamma_{1, 3}  }, we can say that either $G\cong \mathbb{Z}_2\times \mathbb{Z}_{2^2}$ or $G\cong \mathbb{Z}_{2^2}\times \mathbb{Z}_{2^2}$ or $G\cong \mathbb{Z}_p\times\cdots\times \mathbb{Z}_{p}.$ 
	
Conversely, we show that if $G$ is one of the above, then $\mathcal{P}^{**}(G)$ is line graph of some graph. If $G\cong \mathbb{Z}_2\times \mathbb{Z}_{2^2},$ then $\mathcal{P}^{**}(G)$ is the graph in Figure \ref{fig:pwr grapph for  group of order Z_2*Z_2^2} and it is the line graph of the graph $\Gamma,$ described as in Figure \ref{fig: G=Z_2*Z_2^2 P^**(G) is line grapph of graph }
	\begin{figure}[H]
		\tiny
		\centering
		\begin{tikzpicture}[scale=1]
		\tikzstyle{edge_style} = [draw=black, line width=2mm, ]
		%\draw  (0,-4) rectangle (2,-2);
		\draw (0,0)--(0,2);
		\draw (0,2)--(-1,3.5);
		\draw (0,2)--(1,3.5);
		\draw (0,0)--(-1,-1.5);
		\draw (0,0)--(1, -1.5);
		\draw (-3,1)--(-1.5,1);
		\draw (1, 1)--(2.5,1);
		\node (e) at (-.3,0){$\bf{v_1}$};
		\node (e) at (-1.3,-1.5){$\bf{v_2}$};
		\node (e) at (1.3,-1.5){$\bf{v_3}$};
		\node (e) at (-.3, 2){$\bf{v_4}$};
		\node (e) at (1.3,3.5){$\bf{v_5}$};
		\node (e) at (-1.3,3.5){$\bf{v_6}$};
		\node (e) at (-3.3,1){$\bf{v_7}$};
		\node (e) at (-1.2,1){$\bf{v_8}$};
		\node (e) at (.7,1){$\bf{v_9}$};
		\node (e) at (2.9,1){$\bf{v_{10}}$};
		\node (e) at (-.5,1){$\bf{e(\bar{0}, \bar{2})}$};
		\node (e) at (-1.3,-.8){$\bf{e(\bar{0}, \bar{1})}$};
		\node (e) at (1.3,-.8){$\bf{e(\bar{0}, \bar{3})}$};
		\node (e) at (-1.3, 2.8){$\bf{e(\bar{1}, \bar{3})}$};
		\node (e) at (1.3, 2.8){$\bf{e(\bar{1}, \bar{1})}$};
		\node (e) at (1.8,1.2){$\bf{e(\bar{1}, \bar{2})}$};
		\node (e) at (-2.3,1.2){$\bf{e(\bar{1}, \bar{0})}$};
		\fill[black!100!] (0,0) circle (.05);
		\fill[black!100!] (0,2) circle (.05);
		\fill[black!100!] (-1, 3.5) circle (.05);
		\fill[black!100!] (-1,-1.5) circle (.05);
		\fill[black!100!] (-3,1) circle (.05);
		\fill[black!100!] (1,1) circle (.05);
		\filldraw[black!100] (2.5,1) circle (.05);
		\filldraw[black!100] (-1.5,1) circle (.05);
		\filldraw[black!100] (1,-1.5) circle (.05);
		\fill[black!100!] (1, 3.5) circle (.05);
		\end{tikzpicture}
		\caption{The graph $\Gamma$ such that $\mathcal{P}^{**}(\mathbb{Z}_2\times\mathbb{Z}_{2^2})=L(\Gamma).$ }
		\label{fig: G=Z_2*Z_2^2 P^**(G) is line grapph of graph }	
	\end{figure}
Let $G\cong\mathbb{Z}_{2^2}\times\mathbb{Z}_{2^2}.$ The group $G$ has $6$ distinct cyclic subgroups of order $4,$ namely 
$H_1=\langle (\bar{1}, \bar{0})\rangle, H_2=\langle (\bar{0}, \bar{1})\rangle, H_3=\langle (\bar{1}, \bar{1})\rangle,
H_4=\langle (\bar{1}, \bar{3})\rangle, H_5=\langle (\bar{1}, \bar{2})\rangle, H_6=\langle (\bar{2}, \bar{1})\rangle.$
Again $(\bar{2},\bar{0})\in H_1\cap H_5, (\bar{2},\bar{2})\in H_3\cap H_4, (\bar{0},\bar{2})\in H_2\cap H_6.$ Therefore, $\mathcal{P}^{**}(G)$ is the graph in Figure \ref{fig:pwr grapph for  group of  Z_2^2*Z_2^2}
	\begin{figure}[H]
		\tiny
		\centering
		\begin{tikzpicture}[scale=1]
		\tikzstyle{edge_style} = [draw=black, line width=2mm, ]
		\draw (0,0)--(2,0);
		\draw (0, 0)--(0,2);
		\draw (0,2)--(2,0);
		\draw (2,0)--(4,0);
		\draw (4,0)--(4,2);
		\draw (4,2)--(2,0);
		\node (e) at (0,-.3){$\bf{(\bar{1}, \bar{0})}$};
		\node (e) at (2,-.3){$\bf{(\bar{2}, \bar{0})}$};
		\node (e) at (0,2.2){$\bf{(\bar{3}, \bar{0})}$};
		%\node (e) at (1,3.2){$\bf{(\bar{0}, \bar{3})}$};
		\node (e) at (4,-.3){$\bf{(\bar{1}, \bar{2})}$};
		%%	\node (e) at (-1,1.1){$\bf{(\bar{1}, \bar{0})}$};
		\node (e) at (4,2.2){$\bf{(\bar{3}, \bar{2})}$};
		\fill[black!100!] (0,0) circle (.05);
		\fill[black!100!] (2,0) circle (.05);
		\fill[black!100!] (0,2) circle (.05);
		\fill[black!100!] (4,0) circle (.05);
		\fill[black!100!] (4,2) circle (.05);
		%\fill[black!100!] (-1,1.5) circle (.06);
		%\fill[black!100!] (7,1.5) circle (.06);
		\node (e) at (4.5,1){$\bf{\bigoplus}$};
		\draw (5,0)--(7,0);
		\draw (5, 0)--(5,2);
		\draw (5,2)--(7,0);
		\draw (7,0)--(9,0);
		\draw (9,0)--(9,2);
		\draw (9,2)--(7,0);
		%
		%\node (e) at (-.5,0){$\bf{(\bar{1}, \bar{0})}$};
		\node (e) at (7,-.3){$\bf{(\bar{0}, \bar{2})}$};
		\node (e) at (5,2.2){$\bf{(\bar{0}, \bar{1})}$};
		\node (e) at (5,-.3){$\bf{(\bar{0}, \bar{3})}$};
		\node (e) at (9,-.3){$\bf{(\bar{2}, \bar{1})}$};
		%\node (e) at (4,1.1){$\bf{(\bar{2}, \bar{3})}$};
		\node (e) at (9,2.2){$\bf{(\bar{2}, \bar{3})}$};
		\fill[black!100!] (5,0) circle (.05);
		\fill[black!100!] (7,0) circle (.05);
		\fill[black!100!] (5,2) circle (.05);
		\fill[black!100!] (9,0) circle (.05);
		\fill[black!100!] (9,2) circle (.05);
		%\fill[black!100!] (-1,1.5) circle (.06);
		%\fill[black!100!] (7,1.5) circle (.06);
		\node (e) at (9.5,1){$\bf{\bigoplus}$};
		\draw (10,0)--(12,0);
		\draw (10, 0)--(10,2);
		\draw (10,2)--(12,0);
		\draw (12,0)--(14,0);
		\draw (14,0)--(14,2);
		\draw (14,2)--(12,0);
		
		%\node (e) at (-.5,0){$\bf{(\bar{1}, \bar{0})}$};
		\node (e) at (12,-.3){$\bf{(\bar{2}, \bar{2})}$};
		\node (e) at (10,2.2){$\bf{(\bar{1}, \bar{1})}$};
		\node (e) at (10,-.3){$\bf{(\bar{3}, \bar{3})}$};
		\node (e) at (14,-.3){$\bf{(\bar{3}, \bar{1})}$};
		%\node (e) at (4,1.1){$\bf{(\bar{2}, \bar{3})}$};
		\node (e) at (14,2.2){$\bf{(\bar{1}, \bar{3})}$};
		\fill[black!100!] (12,0) circle (.05);
		\fill[black!100!] (10,0) circle (.05);
		\fill[black!100!] (10,2) circle (.05);
		\fill[black!100!] (14,0) circle (.05);
		\fill[black!100!] (14,2) circle (.05);
		%\fill[black!100!] (-1,1.5) circle (.06);
		%\fill[black!100!] (7,1.5) circle (.06);
		\end{tikzpicture}
		\caption{The graph $\mathcal{P}^{**}(\mathbb{Z}_{2^2}\times\mathbb{Z}_{2^2})$}
		\label{fig:pwr grapph for  group of  Z_2^2*Z_2^2}	
	\end{figure}
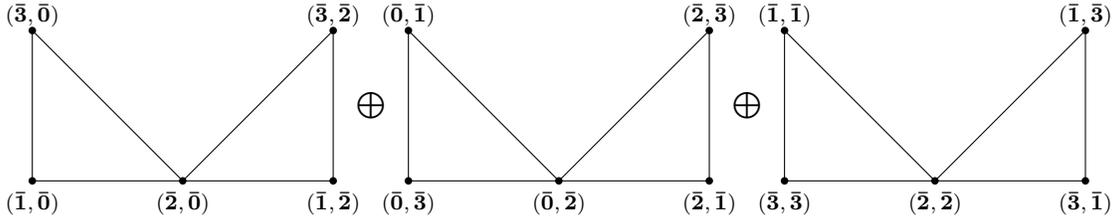
Clearly, $\mathcal{P}^{**}(G)$ is the line graph of the graph described as in Figure \ref{fig: G=Z_2^2*Z_2^2  line grapph of graph of P^**(G) } 
	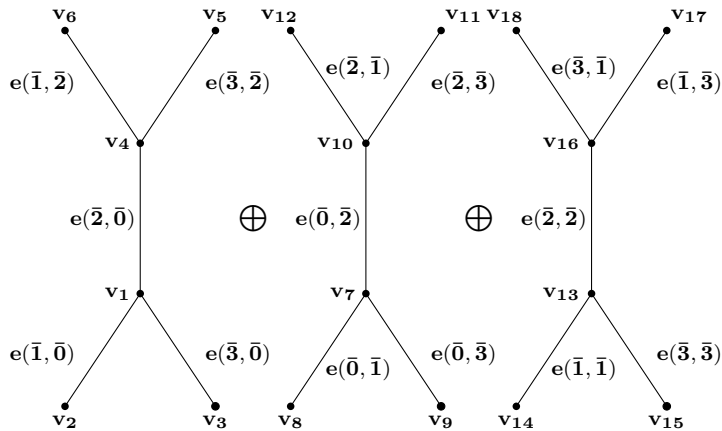
\begin{figure}[H]
		\tiny
		\centering
		\begin{tikzpicture}[scale=1]
		\tikzstyle{edge_style} = [draw=black, line width=2mm, ]
		%\draw  (0,-4) rectangle (2,-2);
		\draw (0,0)--(0,2);
		\draw (0,2)--(-1,3.5);
		\draw (0,2)--(1,3.5);
		\draw (0,0)--(-1,-1.5);
		\draw (0,0)--(1, -1.5);
		%	\draw (-3,1)--(-1.5,1);
		%	\draw (1, 1)--(2.5,1);
		\node (e) at (-.3,0){$\bf{v_1}$};
		\node (e) at (-1,-1.7){$\bf{v_2}$};
		\node (e) at (1,-1.7){$\bf{v_3}$};
		\node (e) at (-.3, 2){$\bf{v_4}$};
		\node (e) at (1,3.7){$\bf{v_5}$};
		\node (e) at (-1,3.7){$\bf{v_6}$};
		%\node (e) at (-3.3,1){$\bf{v_7}$};
		%\node (e) at (-1.2,1){$\bf{v_8}$};
		%\node (e) at (.7,1){$\bf{v_9}$};
		%\node (e) at (2.9,1){$\bf{v_{10}}$};
		%
		\node (e) at (-.5,1){$\bf{e(\bar{2}, \bar{0})}$};
		\node (e) at (-1.3,-.8){$\bf{e(\bar{1}, \bar{0})}$};
		\node (e) at (1.3,-.8){$\bf{e(\bar{3}, \bar{0})}$};
		\node (e) at (-1.3, 2.8){$\bf{e(\bar{1}, \bar{2})}$};
		\node (e) at (1.3, 2.8){$\bf{e(\bar{3}, \bar{2})}$};
		%\node (e) at (1.8,1.2){$\bf{e(\bar{1}, \bar{2})}$};
		%\node (e) at (-2.3,1.2){$\bf{e(\bar{1}, \bar{0})}$};
		%
		\fill[black!100!] (0,0) circle (.05);
		\fill[black!100!] (0,2) circle (.05);
		\fill[black!100!] (-1, 3.5) circle (.05);
		\fill[black!100!] (-1,-1.5) circle (.05);
		%\fill[black!100!] (-3,1) circle (.05);
		%\fill[black!100!] (1,1) circle (.05);
		%\filldraw[black!100] (2.5,1) circle (.05);
		%\filldraw[black!100] (-1.5,1) circle (.05);
		\filldraw[black!100] (1,-1.5) circle (.05);
		\fill[black!100!] (1, 3.5) circle (.05);
		\node (e) at (1.5,1){$\bf{\bigoplus}$};
		\draw (3,0)--(3,2);
		\draw (3,2)--(2,3.5);
		\draw (3,2)--(4,3.5);
		\draw (3,0)--(2,-1.5);
		\draw (3,0)--(4, -1.5);
		%	\draw (-3,1)--(-1.5,1);
		%	\draw (1, 1)--(2.5,1);
		\node (e) at (2.7,0){$\bf{v_7}$};
		\node (e) at (2,-1.7){$\bf{v_8}$};
		\node (e) at (4,-1.7){$\bf{v_{9}}$};
		\node (e) at (2.6, 2){$\bf{v_{10}}$};
		\node (e) at (4.3,3.7){$\bf{v_{11}}$};
		\node (e) at (1.85,3.7){$\bf{v_{12}}$};
		%\node (e) at (-3.3,1){$\bf{v_7}$};
		%\node (e) at (-1.2,1){$\bf{v_8}$};
		%\node (e) at (.7,1){$\bf{v_9}$};
		%\node (e) at (2.9,1){$\bf{v_{10}}$};
		%
		\node (e) at (2.5,1){$\bf{e(\bar{0}, \bar{2})}$};
		\node (e) at (2.9,-1){$\bf{e(\bar{0}, \bar{1})}$};
		\node (e) at (4.3,-.8){$\bf{e(\bar{0}, \bar{3})}$};
		\node (e) at (2.9, 3){$\bf{e(\bar{2}, \bar{1})}$};
		\node (e) at (4.3, 2.8){$\bf{e(\bar{2}, \bar{3})}$};
		%\node (e) at (1.8,1.2){$\bf{e(\bar{1}, \bar{2})}$};
		%\node (e) at (-2.3,1.2){$\bf{e(\bar{1}, \bar{0})}$};
		\fill[black!100!] (3,0) circle (.05);
		\fill[black!100!] (3,2) circle (.05);
		\fill[black!100!] (2, 3.5) circle (.05);
		\fill[black!100!] (2,-1.5) circle (.05);
		%\fill[black!100!] (-3,1) circle (.05);
		%\fill[black!100!] (1,1) circle (.05);
		%\filldraw[black!100] (2.5,1) circle (.05);
		%\filldraw[black!100] (-1.5,1) circle (.05);
		\filldraw[black!100] (4,-1.5) circle (.05);
		\fill[black!100!] (4, 3.5) circle (.05);
		\node (e) at (4.5,1){$\bf{\bigoplus}$};;
		\draw (6,0)--(6,2);
		\draw (6,2)--(5,3.5);
		\draw (6,2)--(7,3.5);
		\draw (6,0)--(5,-1.5);
		\draw (6,0)--(7, -1.5);
		%	\draw (-3,1)--(-1.5,1);
		%	\draw (1, 1)--(2.5,1);
		\node (e) at (5.6,0){$\bf{v_{13}}$};
		\node (e) at (5,-1.7){$\bf{v_{14}}$};
		\node (e) at (7,-1.7){$\bf{v_{15}}$};
		\node (e) at (5.6, 2){$\bf{v_{16}}$};
		\node (e) at (7.3,3.7){$\bf{v_{17}}$};
		\node (e) at (4.85,3.7){$\bf{v_{18}}$};
		%\node (e) at (-3.3,1){$\bf{v_7}$};
		%\node (e) at (-1.2,1){$\bf{v_8}$};
		%\node (e) at (.7,1){$\bf{v_9}$};
		%\node (e) at (2.9,1){$\bf{v_{10}}$};
		%
		\node (e) at (5.5,1){$\bf{e(\bar{2}, \bar{2})}$};
		\node (e) at (5.9,-1){$\bf{e(\bar{1}, \bar{1})}$};
		\node (e) at (7.3,-.8){$\bf{e(\bar{3}, \bar{3})}$};
		\node (e) at (5.9, 3){$\bf{e(\bar{3}, \bar{1})}$};
		\node (e) at (7.3, 2.8){$\bf{e(\bar{1}, \bar{3})}$};
		%\node (e) at (1.8,1.2){$\bf{e(\bar{1}, \bar{2})}$};
		%\node (e) at (-2.3,1.2){$\bf{e(\bar{1}, \bar{0})}$};
		%
		\fill[black!100!] (6,0) circle (.05);
		\fill[black!100!] (6,2) circle (.05);
		\fill[black!100!] (5, 3.5) circle (.05);
		\fill[black!100!] (5,-1.5) circle (.05);
		%\fill[black!100!] (-3,1) circle (.05);
		%\fill[black!100!] (1,1) circle (.05);
		%\filldraw[black!100] (2.5,1) circle (.05);
		%\filldraw[black!100] (-1.5,1) circle (.05);
		\filldraw[black!100] (7,-1.5) circle (.05);
		\fill[black!100!] (7, 3.5) circle (.05);
		%\node (e) at (2,1.5){$\bf{\bigoplus}$};
		\end{tikzpicture}
		\caption{The graph $\Gamma$ such that $\mathcal{P}^{**}(\mathbb{Z}_{2^2}\times\mathbb{Z}_{2^2})=L(\Gamma).$}
		\label{fig: G=Z_2^2*Z_2^2  line grapph of graph of P^**(G) }	
	\end{figure}
Let $G\cong\underbrace{\mathbb{Z}_p\times \cdots\times \mathbb{Z}_p}_{k \text{ times}}.$ In this case $\mathcal{P}^{**}(G)\cong \underbrace{K_{\phi(p)}\bigoplus\cdots\bigoplus K_{\phi(p)}}_{k \text{ times }}.$ Clearly, it is the line graph of the graph $\underbrace{\Gamma_{1, \phi(p)}\bigoplus\cdots\bigoplus\Gamma_{1, \phi(p)}}_{k \text{ times }},$ where $k=p^{k-1}+p^{k-2}+\cdots+p+1.$ This completes the proof.
\end{proof}
In this portion we study the non ablien nilpotent groups $G$ for which $\mathcal{P}^{**}(G)$ is line graph. 
\begin{theorem}\label{thm:  non abelian G has geq 3 distinct prime divisor, not line graph }
Let $G$ be a non abelian nilpotent group such that $|G|$ has at least three distinct prime divisors. Then there is no graph $\Gamma$ such that $\mathcal{P}^{**}(G)=L(\Gamma).$	
\end{theorem}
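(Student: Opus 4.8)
The plan is to show that $\mathcal{P}^{**}(G)$ contains the star graph $\Gamma_{1,3}$ (the claw, which is the graph $\Gamma_9$ of Figure \ref{fig:line grapph theory}) as an induced subgraph, so that Lemma \ref{line graph} immediately forbids $\mathcal{P}^{**}(G)$ from being a line graph. The first step I would take is to pin down exactly which vertices are deleted when passing from $\mathcal{P}(G)$ to $\mathcal{P}^{**}(G)$. Since $G$ is non abelian it is not cyclic and not a cyclic $p$-group, and because $|G|$ has at least three distinct prime divisors it is certainly not a generalized quaternion group; hence by Lemma \ref{lemma: classification of dominating vertices of power graph} the identity $e$ is the only dominating vertex of $\mathcal{P}(G)$, and therefore $\mathcal{P}^{**}(G)=\mathcal{P}(G)\setminus\{e\}$. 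The point of this reduction is that \emph{every} non-identity vertex survives the deletion, so any claw built from non-identity elements will actually sit inside $\mathcal{P}^{**}(G)$.

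Next I would construct the claw explicitly. Choose three distinct primes $p_1,p_2,p_3$ dividing $|G|$. By Lemma \ref{Nilpotent group charecterization thm} we may write $G\cong P_1\times\cdots\times P_r$, and by Lemma \ref{m|G, nilpotent G has subgroup of ordder m} each Sylow subgroup $P_i$ contains an element $a_i$ of order $p_i$. Because elements of different prime orders commute in a nilpotent group, the product $v:=a_1a_2a_3$ has order $p_1p_2p_3$. Setting $v_1=v^{p_2p_3}$, $v_2=v^{p_1p_3}$, $v_3=v^{p_1p_2}$ gives three elements of orders $p_1,p_2,p_3$ respectively, each a power of $v$, so $v\sim v_i$ for $i=1,2,3$. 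Conversely, for $i\neq j$ the orders $p_i,p_j$ are coprime, so neither $v_i$ nor $v_j$ is a power of the other and $v_i\nsim v_j$. Since none of $v,v_1,v_2,v_3$ equals $e$, all four lie in $\mathcal{P}^{**}(G)$ by the first step, and they induce $\Gamma_{1,3}(v,v_1,v_2,v_3)$. By Lemma \ref{line graph}, $\mathcal{P}^{**}(G)$ is not a line graph.

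The only delicate point, and the one where the hypotheses really earn their keep, is ensuring that the central vertex $v$ is not itself deleted as a dominating vertex. This is precisely the phenomenon that separates the present case from the cyclic group $\mathbb{Z}_{pqr}$ of Proposition \ref{prop:P^{**}(G), does not contain star graph Gamma(1, 3), G cylic}, where no induced claw arises: in that cyclic setting the elements whose order is the product of all prime divisors are exactly the generators, which are dominating and hence removed, so no suitable hub of order $p_1p_2p_3$ remains. Here, by contrast, $G$ is non abelian and therefore not cyclic, so $v$ (whose order $p_1p_2p_3$ is a proper divisor of $|G|$) can never be a generator and, by Lemma \ref{lemma: classification of dominating vertices of power graph}, can never be dominating. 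I expect verifying this survival of the hub—rather than the routine incidence checks—to be the main thing to get right, and it is handled cleanly by the dominating-vertex classification together with the observation that $|G|>p_1p_2p_3$ for a non abelian nilpotent group with three prime divisors.
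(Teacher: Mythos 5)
Your proposal is correct and follows essentially the same route as the paper: the paper also produces an element $v^{(p_1p_2p_3)}$ of order $p_1p_2p_3$ via nilpotency and takes elements of orders $p_1,p_2,p_3$ inside $\langle v^{(p_1p_2p_3)}\rangle$ to exhibit an induced $\Gamma_{1,3}$. The only difference is that you explicitly justify, via the dominating-vertex classification, that all four vertices survive the passage to $\mathcal{P}^{**}(G)$ — a point the paper leaves implicit with ``Clearly'' — which is a welcome addition rather than a divergence.
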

\begin{proof}
Let $p_1, p_2, p_3$ be three distinct prime divisors of $|G|.$ Now $G$ nilpotent implies that $G$ has an element $v^{(p_1p_2p_3)}$ such that $\text{o}(v)=p_1p_2p_3.$ Now $\langle v^{(p_1p_2p_3)}\rangle$ has elements $v^{(p_1)}, v^{(p_2)}, v^{(p_3)}$ such that $\text{o}(v^{(p_1)})=p_1, \text{o}(v^{(p_2)})=p_2, \text{o}(v^{(p_3)})=p_3.$ Clearly, $\mathcal{P}^{**}(G)$ has an induced subgraph $\Gamma_{1, 3}(v^{(p_1p_2p_3)}, v^{(p_1)}, v^{(p_2)}, v^{(p_3)}).$ Hence the theorem.
\end{proof}
\begin{theorem}\label{thm: G non ab nilpotent 2 prime divisors, P**(G) not a line graph}
Let $G$ be non abelian nilpotent group such that $|G|$ has exactly two distinct prime divisors. Then there is no graph $\Gamma$ such that $\mathcal{P}^{**}(G)=L(\Gamma).$ 	
\end{theorem}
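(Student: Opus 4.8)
The plan is to produce, in every case, an induced copy of the claw $\Gamma_{1,3}$ (which is exactly the graph $\Gamma_9$ of Figure~\ref{fig:line grapph theory}) and then invoke Lemma~\ref{line graph}. First I would record the ambient structure: since $G$ is nilpotent with exactly two prime divisors $p,q$, Lemma~\ref{Nilpotent group charecterization thm} gives $G\cong P\times Q$ with $P$ a Sylow $p$-subgroup and $Q$ a Sylow $q$-subgroup, and as $G$ is non-abelian at least one factor is non-abelian; without loss of generality $P$ is non-abelian. Because $G$ is neither a cyclic $p$-group, nor cyclic, nor a generalized quaternion group, Lemma~\ref{lemma: classification of dominating vertices of power graph} shows that the only dominating vertex of $\mathcal{P}(G)$ is the identity, so $V(\mathcal{P}^{**}(G))=G\setminus\{e\}$ and adjacency there is just the power-graph adjacency. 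I would then fix an element $z\in Q$ with $\mathrm{o}(z)=q$, which exists by Lemma~\ref{m|G, nilpotent G has subgroup of ordder m} (take a generator of a subgroup of order $q$).

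The main construction exploits that $z$ commutes with every element of $P$, since elements of coprime prime-power order commute in a nilpotent group. Suppose first that $P$ has at least three distinct subgroups of order $p$; recall from the proof of Theorem~\ref{thm: P(G) line graph, G non abelian nilpotent} that for a $p$-group this number is either $1$ or $\geq 3$. Choosing $x_1,x_2,x_3\in P$ of order $p$ from three distinct such subgroups, I would set the leaves $x_iz$; each has order $pq$ and $\langle x_iz\rangle=\langle x_i\rangle\times\langle z\rangle$ is cyclic. Since $(x_iz)^p=z^p$ generates $\langle z\rangle$, we get $z\in\langle x_iz\rangle$, so $z\sim x_iz$ for each $i$; and since two elements of equal order $pq$ are adjacent only when they generate the same cyclic group, while $\langle x_iz\rangle\cap\langle x_jz\rangle=\langle z\rangle$ for $i\neq j$, the leaves are pairwise non-adjacent. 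Thus $\Gamma_{1,3}(z,x_1z,x_2z,x_3z)$ is an induced subgraph of $\mathcal{P}^{**}(G)$.

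It remains to treat the case where $P$ has a unique subgroup of order $p$. By Lemma~\ref{p group unique subgrp of order p, g is cyclic} this is impossible for odd $p$ (it would force $P$ cyclic, contradicting non-abelianness), so necessarily $p=2$ and $P\cong Q_{2^n}$. Here I would replace order-$p$ elements by order-$4$ elements: $Q_{2^n}$ has a unique involution but $2^{n-2}+1\geq 3$ distinct cyclic subgroups of order $4$, so I may pick $w_1,w_2,w_3$ of order $4$ from three distinct ones and rerun the same argument with leaves $w_iz$ of order $4q$ (using $(w_iz)^4=z^4$ to see $z\sim w_iz$ and distinctness of the $\langle w_iz\rangle$ to see $w_iz\nsim w_jz$), obtaining an induced $\Gamma_{1,3}(z,w_1z,w_2z,w_3z)$. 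In either case $\mathcal{P}^{**}(G)$ contains an induced claw, so by Lemma~\ref{line graph} it is not a line graph. I expect the quaternion subcase to be the only genuine obstacle: it is precisely where the naive ``three subgroups of order $p$'' recipe breaks down, and the remedy requires knowing both that $Q_{2^n}$ has a unique order-$2$ subgroup (which forces the substitution) and that it nonetheless carries at least three cyclic subgroups of order $4$; the remaining verifications are routine.
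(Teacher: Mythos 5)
Your proof is correct, but it is organized quite differently from the paper's. The paper splits on the arithmetic of the two primes: when both are odd (or when the Sylow $2$-subgroup contains an element of order at least $4$), it builds the forbidden graph $\Gamma_3$ of Figure \ref{fig:line grapph theory} entirely inside a single cyclic subgroup of order $p_1p_2$ (resp.\ $2^kp_2$), using $\phi(p_1),\phi(p_2),\phi(p_1p_2)\geq 2$ to get two elements of each relevant order; only in the residual case where the Sylow $2$-subgroup is elementary abelian does it resort to a claw, with center an involution and leaves of order $2p_2$ drawn from the $\ell\geq 3$ subgroups of order $p_2$ of the (then necessarily non-abelian) odd Sylow subgroup. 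You instead split on the structure of the non-abelian Sylow subgroup $P$ itself --- whether it has $\geq 3$ minimal subgroups or a unique one, the latter forcing $P\cong Q_{2^n}$ --- and produce an induced $\Gamma_{1,3}$ uniformly in every case, with center of order $q$ and leaves $x_iz$ (or $w_iz$ in the quaternion case). Your route mirrors the case analysis of the paper's Theorem \ref{thm: P(G) line graph, G non abelian nilpotent} rather than of its Theorem \ref{thm: G non ab nilpotent 2 prime divisors, P**(G) not a line graph}, and is arguably more uniform since it needs only one forbidden subgraph; note that the quaternion subcase you isolate never appears separately in the paper, because a quaternion Sylow $2$-subgroup supplies elements of order $4$ and is absorbed into the paper's $\Gamma_3$ construction. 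Your verifications (that the identity is the only dominating vertex via Lemma \ref{lemma: classification of dominating vertices of power graph}, that $z\sim x_iz$ via $(x_iz)^p=z^p$, that equal-order elements are adjacent only if they generate the same cyclic group, and that $Q_{2^n}$ has $2^{n-2}+1\geq 3$ cyclic subgroups of order $4$) are all sound.
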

\begin{proof}
It is given that $G$ is non abelian nilpotent and $|G|$ has two distinct prime divisors. Let the prime divisors are $p_1$ and $p_2.$ Note that $G\cong P_1\times P_2,$ where $P_1, P_2$ are Sylow subgroups of $G$ with $|P_1|=p_1^{\alpha_1}$ and $|P_2|=p_2^{\alpha_2}.$ So $|G|=p_1^{\alpha_1}p_2^{\alpha_2},$ where at least one $\alpha_i \geq 2(i=1, 2).$ Otherwise, $G$ would be cyclic. First suppose that $p_1$ and $p_2$ are odd primes. Now $G$ is nilpotent, therefore $G$ has an element $v^{(p_1p_2)}$ of order $p_1p_2.$ Consider the cyclic subgroup $H=\langle v^{(p_1p_2)}\rangle.$ (Note that all the elements of the cyclic group $H$ belong to $V(\mathcal{P}^{**}(G)).)$ Again $H$ has elements $v^{(p_1)}$ and $v^{(p_2)}$ of order $p_1$ and $p_2$ respectively. Also no elements in $\langle v^{(p_1)}\rangle $ is edge connected with the element in $\langle v^{(p_2)}\rangle.$ Now $p_1$ and $p_2$ are odd primes, which imply that $\phi(p_1p_2), \phi(p_1), \phi(p_2)\geq2.$ Let $v_1^{(p_1)}, v_2^{(p_1)}, v_1^{(p_2)} v_2^{(p_2)} \text{ and } v_1^{(p_1p_2)}, v_1^{(p_1p_2)}$ are elements of $H$ such that
$\text{o}(v_1^{(p_1)})=\text{o}(v_2^{(p_1)})=p_1, \text{o}(v_1^{(p_2)})=\text{o}(v_2^{(p_2)})=p_2, \text{ and } \text{o}(v_1^{(p_1p_2)})=\text{o}(v_2^{(p_1p_2)})=p_1p_2.$
Now we replace the vertices of the graph $\Gamma_3$ in Figure \ref{fig:line grapph theory} in the following way:
\[11 \text{ by } v_1^{(p_1p_2)}, 13 \text{ by } v_2^{(p_1p_2)}, 12 \text{ by }v_1^{(p_1)}, 15 \text{ by } v_2^{(p_1)}, 14 \text{ by } v_1^{(p_2)} \text{ and } 16 \text{ by } v_2^{(p_2)}.\]
It is easy to see that the resulting graph is isomorphic to the graph $\Gamma_3$ in Figure \ref{fig:line grapph theory}. 

Now let at least one of $p_1$ and $p_2$ be $2.$ Let $p_1=2.$ Then $|G|=2^kp_2^r,$ for some $r, k\in \mathbb{N}$ with at least one of $r, k\geq 2.$ First suppose that $G$ has an element of order $2^k, k\geq2.$ Now $G$ has an element $v^{(2^kp_2)}$ such that $\text{o}(v^{(2^kp_2)})=2^kp_2.$ As $\phi(2^kp_2)\geq 2,$ we can choose two elements $v_1^{(2^kp_2)}, v_2^{(2^kp_2)}$ form $\langle v^{(2^kp_2)}\rangle$ such that $\text{o}(v_1^{(2^kp_2)})=2^kp_2$ and $\text{o}(v_1^{(2^kp_2)})=2^kp_2.$ Also $\langle v^{(2^kp_2)}\rangle$ has elements namely, $v_1^{(2^k)}, v_2^{(2^k)} (\text{ as }k\geq 2, \phi(2^k)\geq2), v_1^{(p_2)}, v_2^{(p_2)}$ such that $\text{o}(v_1^{(2^k)})=2^k, \text{o}(v_2^{(2^k)})=2^k, \text{o}(v_1^{(p_2)})=p_2, \text{o}(v_2^{(p_2)})=p_2$ ( $p_2$ is an odd prime, so it is possible to find at least two elements of order $p_2$ in $\langle v^{(2p_2)}\rangle).$ 
Now we replace the vertices of the graph $\Gamma_3$ in Figure \ref{fig:line grapph theory} in the following way:
\[11 \text{ by } v_1^{(2^kp_2)}, 13 \text{ by } v_2^{(2^kp_2)}, 12 \text{ by }v_1^{(2^k)}, 15 \text{ by } v_2^{(2^k)}, 14 \text{ by } v_1^{(p_2)} \text{ and } 16 \text{ by } v_2^{(p_2)}.\] As a result $\mathcal{P}^{**}(G)$ has an induced subgraph isomorphic to the graph $\Gamma_3.$ Therefore, in this case $\mathcal{P}^{**}(G)$ is not a line graph.
 
Now suppose that order of each element of the Sylow subgroup $P_1$ is $2.$ Then either $P_1\cong \underbrace{\mathbb{Z}_2\times \cdots \times \mathbb{Z}_2}_{k(\geq2)\text{ times }}$ or $P_1\cong \mathbb{Z}_2.$ Therefore, either $G\cong\underbrace{\mathbb{Z}_2\times \cdots \times \mathbb{Z}_2}_{k(\geq2)\text{ times }}\times P_2$ or $G\cong \mathbb{Z}_2\times P_2.$ In both of the cases, $G$ has at least one element $v^{(2)}$ (say) of order $2.$ Let $H_1=\langle v_1^{(p_2)}\rangle, \cdots, H_{\ell}=\langle v_{\ell}^{(p_2)}\rangle$ be the complete list of distinct cyclic subgroups of order $p_2$ of the Sylow subgroup $P_2.$  Now by the claim in Case 3 of the proof of Theorem \ref{thm: P(G) line graph, G non abelian nilpotent} $\ell\neq 2.$ Again $\ell=1$ implies that $G$ is abelian by Lemma \ref{p group unique subgrp of order p, g is cyclic}. Therefore, $\ell\geq 3.$ Consider $K_1=\langle v^{(2)}v_1^{(p_2)} \rangle, \cdots, K_{\ell}=\langle v^{(2)}v_{\ell}^{(p_2)} \rangle.$ Clearly, each $K_i$ is a cyclic group of order $2p_2.$  Moreover, for all $i\neq j, K_i\neq K_j$ and $K_i\cap K_j=\langle v^{(2)}\rangle,$ where $i, j\in \{1, \cdots, \ell\}, (\ell\geq 3).$ Now we choose three generators $ v^{(2)}v_{i_1}^{(p_2)}, v^{(2)}v_{i_2}^{(p_2)} \text{ and } v^{(2)}v_{i_3}^{(p_2)}$ from three distinct cyclic subgroups $K_{i_1}, K_{i_2}$ and $K_{i_3}$ respectively, where $i_1, i_2, i_3\in \{1, \cdots, \ell\}.$ Since $K_{i_1}, K_{i_2}$ and $K_{i_3}$ are distinct cyclic subgroups, then $ v^{(2)}v_{i_1}^{(p_2)}, v^{(2)}v_{i_2}^{(p_2)} \text{ and } v^{(2)}v_{i_3}^{(p_2)}$ are not adjacent to reach other in $\mathcal{P}^{**}(G).$ Also $\langle v^{(2)}\rangle$ is contained in each of the cyclic subgroups $K_{i_1}, K_{i_2}$ and $K_{i_3}.$ As a result, $\mathcal{P}^{**}(G)$ contains an induced subgraph $\Gamma_{1, 3}(v^{(2)}, v^{(2)}v_{i_1}^{(p_2)}, v^{(2)}v_{i_2}^{(p_2)}, v^{(2)}v_{i_3}^{(p_2)}).$ This completes the proof. 
\end{proof}
\begin{theorem}\label{Thm: non abelian p group and line graph possibility of P**(G)}
Let $G$ be a non abelian $p$-group, where $p$ is an odd prime. Then there is a graph $\Gamma$ such that $\mathcal{P}^{**}(G)=L(\Gamma)$ if and only if $G=\mathbb{Z}_{p^{t_1}}\cup \cdots\cup \mathbb{Z}_{p^{t_{\ell}}},$ where $\ell$ is the number of distinct subgroups of order $p.$ 	
\end{theorem}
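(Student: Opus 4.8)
The plan is to first collapse $\mathcal{P}^{**}(G)$ to a simpler graph, then restate the group-theoretic hypothesis as a property of the subgroup lattice, and finally treat the two implications separately. First I would note that $\mathcal{P}^{**}(G)=\mathcal{P}^{*}(G)$ here: a non abelian $p$-group with $p$ odd is neither cyclic nor generalized quaternion, so by Lemma \ref{lemma: classification of dominating vertices of power graph} the only dominating vertex of $\mathcal{P}(G)$ is the identity $e$, and deleting all dominating vertices amounts to deleting $e$ alone. Next I would reinterpret the hypothesis $G=\mathbb{Z}_{p^{t_1}}\cup\cdots\cup\mathbb{Z}_{p^{t_\ell}}$ (with $\ell$ the number of subgroups of order $p$) as the assertion that the maximal cyclic subgroups of $G$ pairwise intersect in $\{e\}$. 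This equivalence is a short counting argument: every cyclic $p$-subgroup has a unique subgroup of order $p$, so the assignment sending a maximal cyclic subgroup to its order-$p$ subgroup is onto the $\ell$ subgroups of order $p$; having exactly $\ell$ pieces in the cover forces this assignment to be a bijection, which is the same as saying each subgroup of order $p$ lies in a unique maximal cyclic subgroup, i.e. the maximal cyclic subgroups partition $G$. The goal then becomes: $\mathcal{P}^{*}(G)$ is a line graph iff the maximal cyclic subgroups form a partition.

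For the \emph{if} direction I would show the partition property makes $\mathcal{P}^{*}(G)$ a disjoint union of cliques. Each maximal cyclic subgroup $C_i$ is a cyclic $p$-group, so by Lemma \ref{thm:P(G) compltele iff G cylcic p group} the set $C_i\setminus\{e\}$ induces a clique $K_{p^{t_i}-1}$; and if $x,y$ lie in distinct maximal cyclic subgroups $C_i,C_j$ with $x\sim y$, then one of $\langle x\rangle,\langle y\rangle$ contains the other, forcing $x\in C_i\cap C_j=\{e\}$, a contradiction. Hence $\mathcal{P}^{*}(G)\cong K_{p^{t_1}-1}\bigoplus\cdots\bigoplus K_{p^{t_\ell}-1}$. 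Since $K_m=L(\Gamma_{1,m})$ and the line graph of a disjoint union is the disjoint union of the line graphs, $\mathcal{P}^{*}(G)$ is a line graph.

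For the \emph{only if} direction I would prove the contrapositive. If the partition property fails, then two distinct maximal cyclic subgroups $C_1=\langle a\rangle$ and $C_2=\langle b\rangle$ satisfy $C_1\cap C_2\neq\{e\}$, and I claim $\mathcal{P}^{*}(G)$ contains an induced copy of $\Gamma_3$, which by Lemma \ref{line graph} rules out its being a line graph. Writing $\langle c\rangle=C_1\cap C_2$ of order $p^s$, put $S=\langle c\rangle\setminus\{e\}$, $A=C_1\setminus\langle c\rangle$, and $B=C_2\setminus\langle c\rangle$. A direct comparability check shows $S$ induces a clique completely joined to $A\cup B$, that $A$ and $B$ are cliques, and that no vertex of $A$ is adjacent to a vertex of $B$ (such an edge would place an element of $C_1\setminus\langle c\rangle$ inside $C_2$, hence in $C_1\cap C_2=\langle c\rangle$). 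Choosing $s_1,s_2\in S$, $a_1,a_2\in A$, $b_1,b_2\in B$ then yields the edge on $\{s_1,s_2\}$ completely joined to the two disjoint edges $a_1a_2$ and $b_1b_2$, i.e. $K_2$ joined to $2K_2$, which is exactly $\Gamma_3$.

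The hard part will be securing the three choices, namely $|S|\geq 2$, $|A|\geq 2$, and $|B|\geq 2$, and this is precisely where oddness of $p$ enters. Since $p$ is odd, $|S|=p^s-1\geq p-1\geq 2$; and because $C_1,C_2$ are distinct maximal cyclic subgroups we have $\langle c\rangle\subsetneq C_1$ and $\langle c\rangle\subsetneq C_2$, so $s<t_1$ and $s<t_2$, whence $|A|=p^{t_1}-p^s\geq p^s(p-1)\geq 2$ and likewise $|B|\geq 2$. For $p=2$ the intersection $\langle c\rangle$ can have order $2$, giving $|S|=1$ and collapsing $\Gamma_3$, which is exactly why non abelian $2$-groups are excluded here and handled separately via the arguments for $Q_{2^n}$ and $D_{2n}$. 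Once these counts are in hand, the identification of the induced six-vertex subgraph with $\Gamma_3$ is routine, completing the equivalence.
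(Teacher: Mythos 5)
Your proposal is correct and takes essentially the same approach as the paper: the \emph{if} direction realizes $\mathcal{P}^{**}(G)$ as a disjoint union of cliques, hence the line graph of a disjoint union of stars, and the \emph{only if} direction produces the forbidden induced subgraph $\Gamma_3$ of Figure \ref{fig:line grapph theory} (an edge completely joined to two disjoint edges) from two overlapping cyclic pieces, with oddness of $p$ supplying the required pairs of vertices. The paper's only cosmetic difference is that it phrases the failure case as a block of its order-$p$-subgroup decomposition being non-cyclic and picks the central edge from the order-$p$ subgroup and the outer edges from generators of two incomparable cyclic subgroups containing it, rather than from the intersection and its complements as you do.
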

\begin{proof}
First suppose that $G=\mathbb{Z}_{p^{t_1}}\cup \cdots\cup \mathbb{Z}_{p^{t_{\ell}}},$ where $\ell$ is the number of distinct subgroups of order $p.$ Now for any $i\neq j,$ $\mathbb{Z}_{p^{t_i}}\cap \mathbb{Z}_{p^{t_j}}=\{e\},$ the identity of the group $G.$ In fact, for a non identity element $a\in \mathbb{Z}_{p^{t_i}}\cap \mathbb{Z}_{p^{t_j}},$ the $p$-order cyclic subgroup of $\langle a\rangle$ is contained in $ \mathbb{Z}_{p^{t_i}}\cap \mathbb{Z}_{p^{t_j}}.$ And this contradicts that $G=\mathbb{Z}_{p^{t_1}}\cup \cdots\cup \mathbb{Z}_{p^{t_{\ell}}},$ where $\ell$ is the number of distinct subgroups of order $p.$ As a result, for any $i\neq j,$ $\mathbb{Z}_{p^{t_i}}\cap \mathbb{Z}_{p^{t_j}}=\{e\}.$ So, $\mathcal{P}^{**}(G)$ is a line graph of the graph $\Gamma_{1, p^{t_1}}\bigoplus\cdots \bigoplus\Gamma_{1, p^{t_{\ell}}}.$

Conversely, suppose that there is a graph $\Gamma$ such that $\mathcal{P}^{**}(G)=L(\Gamma).$ We show that $G=\mathbb{Z}_{p^{t_1}}\cup \cdots\cup \mathbb{Z}_{p^{t_{\ell}}},$ where $\ell$ is the number of distinct subgroups of order $p.$ Now $G$ is a $p$-group. So we can write $G=K_1\cup\cdots \cup K_r,$ where 
\begin{align*}
K_1&=\{x_{(1)}\in G: \langle a_{(1)}\rangle \subset \langle x_{(1)}\rangle \text{ and } \text{o}(a_{(1)})=p\}\cup \{e\}\\
K_2&=\{x_{(2)}\in G: \langle a_{(2)}\rangle \subset \langle x_{(2)}\rangle, a_{(2)}\in G\setminus \langle a_{(1)}\rangle \text{ and }\text{o}(a_{(2)})=p\}\cup \{e\}\\
\vdots& \hspace{40mm} \vdots\\
K_r&=\{x_{(r)}\in G: \langle a_{(r)}\rangle \subset \langle x_{(r)}\rangle, a_{(r)}\in G\setminus \langle a_{(1)}\rangle\cup\cdots\cup\langle a_{(r-1)}\rangle, \text{ and } \text{o}(a_{(r)})=p\}\cup \{e\}
\end{align*}
(Note that $r\neq1.$ If $r=1,$ then $G$ is a group with unique minimal subgroup. Again $p$ is an odd prime, therefore, $G$ is a cyclic $p$-group, it contradicts that $G$ is non abelian). Clearly, $K_i\cap K_j=\{e\}$ for any $i\neq j.$
Now it is enough to show that each $K_i=\mathbb{Z}_{p^{t_i}},$ for some $t_i\geq 1.$ Let there is at least one $j$ such that $K_j$ is not a cyclic subgroup of $G.$ Then we can write $K_j$ as a union cyclic subgroups of $G.$ In fact, since $K_j$ is not a cyclic group there exists at least one element say $x_{(j)}^{(p^r)}\in K_j$ such that $\text{o}(x_{(j)}^{(p^r)})=p^{r}, r\geq 2,$ (if order of each element of $K_j$ is $p$ then $K_j$ would be cyclic by the construction of $K_j).$ Consider $K_{(j)}^{r}=\langle x_{(j)}^{(p^r)}\rangle.$ Clearly, it is a cyclic subgroup of $K_j.$ Since $K_j$ is not cyclic group then there is an element say $\tilde{x_{(j)}^{(p^r)}}\in K_j\setminus K_{(j)}^{r}$ such that $\text{o}(\tilde{x_{(j)}^{(p^r)}})=p^s, s\geq 2 (s=1 \text{ implies } K_j=\mathbb{Z}_{p^r}).$ Let $K_{(j)}^s=\langle \tilde{x_{(j)}^{(p^s)}}\rangle.$ Note that neither $K_{(j)}^r$ is a subgroup of $ K_{(j)}^s$ nor $K_{(j)}^s$ is a subgroup of $K_{(j)}^r$ and $K_{(j)}^r\cap K_{(j)}^s$ contains the cyclic subgroup $\langle a_{(j)}\rangle.$ Now $p\geq 3$ implies that $\phi(p),\phi(p^r)\text{ and }\phi(p^s)\geq 2.$ Let $a_{(j)1}, a_{(j)2}$ be two $p$-ordered elements of $K_j, x^{(p^r)}_{(j)1}, x^{(p^r)}_{(j)2}$ be two $p^r$-ordered elements of $K_{(j)}^r$ and $x^{(p^s)}_{(j)1}, x^{(p^s)}_{(j)2}$ be two $p^s$-ordered elements of $K_{(j)}^s.$  Now we replace the vertices of the graph $\Gamma_3$ in Figure \ref{fig:line grapph theory} in the following way:
\[11 \text{ by }a_{(j)1},  13 \text{ by }a_{(j)2}, 12 \text{ by }x^{(p^r)}_{(j)1}, 15 \text{ by } x^{(p^r)}_{(j)2}, 14 \text{ by } x^{(p^s)}_{(j)1} \text{ and } 16 \text{ by }x^{(p^s)}_{(j)2}.\] Clearly the resulting graph is isomorphic to the graph $\Gamma_3.$ Therefore, $\mathcal{P}^{**}(G)$ has an induced subgraph isomorphic to $\Gamma_3.$ This contradicts the fact that $\mathcal{P}^{**}(G)$ is a line graph. Therefore, each $K_i=\mathbb{Z}_{p^{t_i}},$ for some $t_i\in \mathbb{N}.$ Clearly $r=\ell$ and hence $G=\mathbb{Z}_{p^{t_1}}\cup \cdots\cup\mathbb{Z}_{p^{t_{\ell}}},$ where $\ell$ is the number of distinct cyclic subgroup of order $p.$   
\end{proof}
\begin{proof}[Proof of Theorem \ref{Thm:P**(G) is line graph for nilpotent group}]
Clearly, Theorems \ref{classify: G cyclic line graph, P^{**}(G)} \ref{thm: non cyclic abeln grp line graph of {P}^{**}(G)}, \ref{thm:  non abelian G has geq 3 distinct prime divisor, not line graph }, \ref{thm: G non ab nilpotent 2 prime divisors, P**(G) not a line graph}, \ref{Thm: non abelian p group and line graph possibility of P**(G)} complete the proof.	
\end{proof}
Now we concentrate on non abelian $2$-group. For that we need the structures of dihedral groups and generalized quarternion groups.
For $n \geq 2$, the \emph{dihedral group} of order $2n$ is defined by the following presentation:
\[ D_{2n}= \langle r, s : r^n=s^2=e, rs=sr^{-1} \rangle.\]
We also consider the generalized \emph{quarternion groups} $Q_{2^n}.$ Let $x = \overline{(1, 0)}$ and $y = \overline{(0, 1)}.$ Then $Q_{2^n} = \langle x, y\rangle,$ where
\begin{enumerate}
	\item[(a)]
	$x$ has order $2^{n-1}$ and $y$ has order $4,$
	\item[(b)]
	every element of $Q_{2^n}$ can be written in the form $x^a$ or $x^ay$ for some $a\in \mathbb{Z},$
	\item[(c)]
	$x^{2^{n-2}}=y^2,$
	\item[(d)]
	for each $g\in Q^{2^n}$ such that $g\in \langle x \rangle,$ such that $gxg^{-1}=x^{-1}.$
\end{enumerate}
For more information about $D_{2n},$ and $Q_{2^n}$ see \cite{generalized-quaternion, algebradummitfoote, scott-group}.
\begin{proof} [Proof of Theorem \ref{THm: Line graph of generalized quaternion group}]
We know that $Q_{2^n}$ has exactly one cyclic subgroup $H$ of order $2^{n-1}.$ Also each element in $Q_{2^n}\setminus H$ is of order $4.$ So, there are $2^{n-2}$ distinct $4$-ordered cyclic subgroups in $Q_{2^n}.$ Clearly, $\mathcal{P}^{**}(Q_{2^n})$ is the graph $K_{2^{n-1}}\bigoplus \underbrace{K_2\bigoplus\cdots\bigoplus K_2}_{ 2^{n-2}\text{ times}}.$ Therefor, $\mathcal{P}^{**}(Q_{2^n})$ is the line graph of the graph $\Gamma_{1, 2^{n-1}}\bigoplus\underbrace{\Gamma_{1, 2}\bigoplus\cdots\bigoplus\Gamma_{1, 2}}_{ 2^{n-2}\text{ times}}.$ 
\end{proof}
\begin{proof}[Proof of Theorem \ref{thm: P**(D_n) is a line graph if and only}]
Suppose $n$ is not a power of $2.$  Then clearly $|D_n|$ has a prime divisor $p\neq 2.$ Therefore by Theorem \ref{thm: G non ab nilpotent 2 prime divisors, P**(G) not a line graph} $\mathcal{P}^{**}(D_n)$ is not a line graph.
	
Conversely, let $n=2^k,$ then $D_n$ is $2$-group. Also $D_n=\mathbb{Z}_{2^k}\cup \underbrace{\mathbb{Z}_2\cup\cdots\cup\mathbb{Z}_2}_{2^k \text{ times }}$ and the number of distinct $2$-ordered cyclic subgroup is $2^{k+1}.$ Then by Theorem \ref{Thm:P**(G) is line graph for nilpotent group}, $\mathcal{P}^{**}(D_n)$ is line graph.  	
\end{proof}

\subsection*{Acknowledgment} The author profusely thank the anonymous referee for meticulous reading of the manuscript and valuable suggestions that significantly improved the exposition of this paper. The author  would like to thank Prof. Arvind Ayyer for his constant support and encouragement. Also The author  would like to thank Dr. Sumana Hatui for the helpful discussions on $p$-groups. The author was supported by NBHM Post Doctoral Fellowship grant 0204/52/2019/RD-II/339. 
\bibliographystyle{amsplain}

\end{document}